\pgfplotsset{compat = newest}
\definecolor{darkgreen}{RGB}{0,150,0}
\definecolor{C0}{RGB}{31,119,180}
\definecolor{C1}{RGB}{255, 127, 14}
\definecolor{MineShaft}{rgb}{0.188,0.188,0.188}
\definecolor{red}{RGB}{0,0,0}
\numberwithin{equation}{section}
\newcommand{\sgn}{{\rm sgn}\kern 0.12em}
\newcommand{\argmin}{{\rm argmin}\kern 0.12em}
\newcommand{\Ls}{\operatorname{Ls}}
\newcommand{\Li}{\operatorname{Li}}
\newcommand{\inner}[2]{\langle #1, #2 \rangle}
\newcommand{\R}{\mathbb{R}}
\newcommand{\N}{\mathbb{N}}
\newcommand{\bd}{{\rm bd}\kern 0.12em}
\newcommand{\gph}{{\rm gph}\kern 0.12em}
\newcommand{\dom}{{\rm dom}\kern 0.12em}
\newcommand{\Dom}{{\rm Dom}\kern 0.12cm}
\newcommand{\rge}{{\rm Rge}\kern 0.12em}
\newcommand{\epi}{{\rm epi}\kern 0.12em}
\newcommand{\hypo}{{\rm hypo}\kern 0.12em}
\newcommand{\inte}{{\rm int}\kern 0.12em}
\newcommand{\cl}{{\rm cl}\kern 0.12em}
\newcommand{\els}{\Gamma\text{-}\displaystyle\limsup_{n}}
\newcommand{\eli}{\Gamma\text{-}\displaystyle\liminf_{n}}
\newcommand{\weli}{ {\Gamma(w^\ast)\text{-}\displaystyle\liminf_{n}}}
\newcommand{\wels}{ {\Gamma(w^\ast)\text{-}\displaystyle\limsup_{n}}}
\newcommand{\elim}{\Gamma\text{-}\displaystyle\lim_{n}}
\newcommand{\welim}{ {\Gamma(w^\ast)\text{-}\displaystyle\lim_{n}}}
\newcommand{\wwelim}{ {\Gamma(w)\text{-}\displaystyle\lim_{n}}}
\newcommand\norm[1]{\left\lVert#1\right\rVert}
\newcommand\dualnorm[1]{\left\lVert#1\right\rVert_{\ast}}
\newcommand\dual[1]{#1^{\ast}}
\newcommand{\adh}[1]{\overline{#1}}
\def\oo{\sqsubset\!\!\!\sqsupset}
\newcommand{\rectangle}[1]{#1_{\oo}}
\providecommand{\abs}[1]{\lvert#1\rvert}
\let\epsilon\varepsilon
\let\subseteq\subset 
\theoremstyle{plain}
\newtheorem{theorem}{Theorem}[section]
\newtheorem{lemma}[theorem]{Lemma}
\newtheorem{proposition}[theorem]{Proposition}
\newtheorem{corollary}[theorem]{Corollary}
\theoremstyle{definition}
\newtheorem{definition}[theorem]{Definition}
\newtheorem{example}[theorem]{Example}
\theoremstyle{remark}
\newtheorem{remark}[theorem]{Remark}
\numberwithin{equation}{section}
\newcounter{claim}[theorem]
\title[Convergence of Convex Functions]{Duality for the $\Gamma$-Convergence of Convex Functions}
\author{Rafael Correa}
\address{Department of Mathematical Engineering and Center for Mathematical Modeling (CNRS IRL2807) , Universidad de Chile, Santiago, Chile}
\email{rcorrea@dim.uchile.cl}
\author{Pedro Pérez-Aros}
\address{Department of Mathematical Engineering and Center for Mathematical Modeling (CNRS IRL2807) , Universidad de Chile, Santiago, Chile}
\email{pperez@dim.uchile.cl}
\author{José Pablo Santander}
\address{Department of Mathematical Engineering, Universidad de Chile, Santiago, Chile}
\email{josesantander@ug.chile.cl}
\subjclass[2020]{Primary 49J52; Secondary  46B26, 47N10, 90C25 }
\keywords{$\Gamma$-Convergence, Weakly compactly generated space, Fenchel conjugate, Convergence of subdifferentials, Separable Reduction, Rich families, Projectional skeleton}
\thanks{
Rafael Correa was partially supported by Centro de
			Modelamiento Matem\'{a}tico (CMM), ACE210010 and FB210005, BASAL funds for
			center of excellence and ANID-Chile grant: Fondecyt Regular 1240335,  P. P\'erez-Aros was partially supported by Centro de
			Modelamiento Matem\'{a}tico (CMM), ACE210010 and FB210005, BASAL funds for
			center of excellence and ANID-Chile grant: MATH-AMSUD 23-MATH-09 and
			MATH-AMSUD 23-MATH-17, ECOS-ANID ECOS320027, Fondecyt Regular 1220886,
			Fondecyt Regular 1240335, Fondecyt Regular 1240120 and Exploraci\'on  13220097. 
            José Pablo Santander was partially supported by Centro de
			Modelamiento Matem\'{a}tico (CMM), ACE210010 and FB210005, BASAL funds for
			center of excellence and ANID-Chile grant: Fondecyt Regular 1240335
}
\begin{document}

\begin{abstract}
We extend the duality principle for the $\Gamma$-convergence of convex lower semicontinuous functions, which was previously established only in separable reflexive Banach spaces, to the broader class of weakly compactly generated (WCG) Banach spaces, addressing a question of Fitzpatrick and Lewis.
Under the same classical hypothesis of equicoercivity, we show that $\Gamma$-convergence in the norm topology is equivalent to $\Gamma$-convergence of the Fenchel conjugates in the weak$^\ast$ topology.
We further prove that this duality is equivalent to the graphical convergence of the associated subdifferentials with respect to the product topology given by the norm on the primal space and the weak$^\ast$ topology on the dual.
The WCG setting encompasses all separable  and all reflexive Banach spaces separately, i.e, separable spaces without reflexivity assumptions and reflexive spaces without separability assumptions,
as well as important non-reflexive spaces which may fail to be separable,
such as $L^1(\mu)$ for an arbitrary $\sigma$-finite measure. 
As an application, we derive dual characterizations of the $\Gamma$-convergence of convex integral functionals on $L^p$ spaces ($1\leq p<\infty
$).

\end{abstract}

\maketitle


\section{Introduction}

In optimization and the calculus of variations, one seeks a notion of convergence for a sequence of variational problems that ensures the convergence of both minimizers and minimal values. The first concept of this kind was introduced by Wijsman in finite dimensions under the name \emph{infimal convergence} \cite{Wijsman1964convexI,Wijsman1966convexII}, motivated by questions in statistical decision theory \cite{BurkholderWijsman1963}. The next step came from variational inequalities (see \cite{LionsStampacchia1967}), when Mosco introduced in the late 1960s a convergence notion in reflexive spaces for lower semicontinuous convex functions---and hence for closed convex sets via their indicator functions---in order to study the stability of solutions to variational inequalities \cite{mosco1967approximation,mosco1969convergence,Mosco1968}. This notion, now known as \emph{Mosco convergence}, was closely related to the convergence of solutions of elliptic and parabolic PDEs and the associated energy functionals (see, e.g., \cite{Spagnolo1968,DeGiorgiSpagnolo1973,Spagnolo1967cauchy}). In the 1970s, De Giorgi, Franzoni and the Italian mathematical school extended this perspective to general topological spaces and general functions under the name \emph{$\Gamma$-convergence} \cite{DeGiorgiFranzoni1975}, which has since become one of the central tools of the calculus of variations (see, for e.g, \cite{dalmaso1993introduction,gamma_convergence_for_beginners}).

The dualization of variational convergences for convex functions was initiated by Wijsman in finite dimension in \cite{Wijsman1966convexII}. It was continued and generalized by Mosco to the reflexive setting in  \cite{mosco1971continuity}, where he established that Mosco convergence renders the Fenchel conjugation bicontinuous. This phenomenon is, in fact, characteristic of reflexivity (see \cite{moscoreflexivity,beer1992slice}). Subsequently, Joly extended Mosco’s results to general Banach spaces (see \cite{joly}). For $\Gamma$-convergence itself, a duality was established in reflexive separable Banach spaces (see\cite{joly} and \cite{AmbrosettiSbordone1976GammaG}): 
\begin{theorem}\label{Theo:clasico}
Provided that
\begin{equation}\label{classical hyphotesis}
   \sup_{n \in \mathbb{N}} f_n(x_n) < \infty \implies \sup_{n \in \mathbb{N}} \|x_n\| < \infty.
\end{equation}

for every sequence in the separable reflexive space, then 

\begin{equation*}
    f = \wwelim f_n \iff \dual{f}=\Gamma(\norm{\cdot}_{\ast})\text{-}\lim_n \dual{f}_n
\end{equation*}

i.e, the  sequence of lower semicontinuous convex functions $(f_n)_{n\in\N}$ $\Gamma$-converges with the weak topology to a function $f$ if and only if the sequence of conjugates $\Gamma$-converges with the norm topology of the dual to the conjugate of $f$.
\end{theorem}
This classical result, although theoretical, had applications to convex integrals of the calculus of variations (see \cite{Marcellini1973ConvergenzaFunzioniConvesse, BoccardoMarcellini1976VariationalInequalitiesConvergence, MarcelliniSbordone1977DualitaPerturbazione} and references therein). On the other hand, McLinden and Bergstrom \cite{McLindenBergstrom1981} explored consequences in finite dimensions for convex sets, functions, and programming models.

A further milestone in the dualization of $\Gamma$-convergence was reached by Attouch in 1977, who proved that Mosco convergence of convex functions is equivalent to the graphical convergence of their subdifferentials with the product of the norm topologies
up to a normalization constant \cite{attouch1977}. This fundamental result, now known as \emph{Attouch theorem}, has become a cornerstone of variational analysis. It's applications are numerous: to stability in optimization and variational problems \cite{AzeAttouchWets1988,AubinWets1988Stable,AttouchWets1989EpigraphicalAnalysis,AttouchWets1991QuantitativeEpigraphical,AttouchWets1993NonlinearConditioning,AttouchWets1993III}, to the analysis of parabolic variational inequalities \cite{parabolic_variational_inequailities}, to numerical stability in optimization algorithms \cite{numerical_attouch}, and to structural results such as generalized second-order derivatives of convex functions \cite{rockafellar1990generalized, Rockafellar1988EpiDifferentiability, CominettiCorrea1990GenSecondOrder, Cominetti1991PseudoDifferentiability, LevyPoliquinThibault1995PartialExtensions, Poliquin1992} and differentiability of Lipschitz set-valued mappings \cite{daniilidis2024extending}. Attouch’s theorem also remains influential in applied areas such as nonsmooth mechanics and the study of subgradient evolution systems \cite{attouch2014variational}.

\section{The contribution}

Despite these developments, the duality theory of $\Gamma$-convergence (Theorem \ref{Theo:clasico}), has classically been
confined, to the best of our knowledge, to separable reflexive Banach spaces, a restriction which
arises from genuine intrinsic technical obstacles in the available approaches, but leaves out separable spaces that are not reflexive such as $C(K)$ when K is a compact metric space, as well as reflexive spaces that may fail to be separable and even spaces that are not reflexive that also may fail to be separable, such as $L^2(\mu)$ for an arbitrary measure $\mu$ and $L^1(\mu)$ when $\mu$ is $\sigma$-finite, respectively. It is therefore natural to ask whether these assumptions are inherent in the duality mechanism itself. 

In this work, relying on more recent advances in Banach space theory, we show that
these constraints can be removed: separability and reflexivity are not required. Even in
the reflexive setting separability is not necessary, and the duality framework can in fact be extended
to the broader class of weakly compactly generated spaces. Within this framework, we establish a Fenchel duality principle for
$\Gamma$-convergence in weakly compactly generated Banach spaces between the norm topology of the primal and the weak* topology of the dual, where the novelty is that the main result of this article is valid for every reflexive space (without separablity assumptions), for every separable space (without reflexivity assumptions), and also for important non reflexive spaces that may fail to be separable, such as $L^1(\mu)$ whenever $\mu$ is a $\sigma$-finite measure. In particular, this extends the classical duality results previously
known in reflexive separable spaces (Theorem \ref{Theo:clasico}) and recovers the reflexive case as a
special instance (without the necessity of separability).

Our results provide an answer to a question raised explicitly by Fitzpatrick and Lewis at the end of their work in 1988 \cite{FitzpatrickLewis2006WeakStarConvexSets},
concerning the relation between the convergence of convex functions and
the convergence of their Fenchel conjugates. This question of Fitzpatrick and Lewis was raised at a time where continuity of the Fenchel conjugate and the polar map, and generalizations of Mosco convergence to non-reflexive spaces, as well as topologies for closed (or closed and convex) sets and convex functions, were extensively studied (see, e.g., \cite{Back1986continuity,Beer1988YoungFenchel,Beer1990EpiDistance,AttouchWets1983SaddleConvergence,DoleckiSalinettiWets1983,AttouchWets1986Isometries,ContessePenot1991,Penot1991CosmicBoundedHausdorff,BeerThera1994AWDifferentialOperator, SonntagZalinescu1993SetConvergences, BeerLucchetti1993WeakTopologiesClosedSubsets, BeerLucchetti1991ConvexOptimizationEpiDistance, BorweinFitzpatrick1989, BorweinVanderwerff1995LipschitzRegularizations, BeerDiConcilio1991UniformContinuity, Beer1994WijsmanRenorming, Penot1992TopologiesConvergences}).
 
We further use this duality framework to establish a relation between $\Gamma$-convergence of convex functions and the graphical convergence of the associated subdifferentials with respect to the norm–$w^\ast$ product topology (see Theorem \ref{Theo:equival}). We provide a counterexample that demonstrate that these relations break down outside the weakly compactly generated setting (see Example \ref{Ex:necesidad_WCG}). In the reflexive case, and under equicoerciveness, we additionally prove that Mosco convergence—equivalently, by Attouch’s theorem, graphical convergence in the norm–norm product topology—is equivalent to graphical convergence simultaneously in the norm–weak$^\ast$ and weak–norm product topologies. Moreover, under equicoerciveness of the functions or equicoerciveness of their conjugates, we show that Mosco convergence in reflexive spaces is equivalent to the $\Gamma$-convergence of the functions with the norm topology and of their Fenchel conjugates with the norm topology of the dual when the $\Gamma$-limits are conjugate to each other, which, under equicoerciveness of the functions and their conjugates, is equivalent to the $\Gamma$-convergence of the functions with the weak topology and of their Fenchel conjugates with the weak* topology when the $\Gamma$-limits are conjugate to each other.

As an application, we consider convex integral functionals on $L^p$ spaces
over $\sigma$-finite measure spaces. This framework goes beyond the separable reflexive classical
setting, since such spaces may fail to be separable, and in the case
$p=1$ they are not reflexive. Nevertheless, they are weakly compactly generated,
so the main results of this work apply and yield dual characterizations of the corresponding
variational convergence.

The proof of our main result relies heavily on modern tools of Banach Space Theory. Our approach also reveals insights on the structural properties of weakly compactly generated Banach spaces, analogous to those used in the recent characterization of Asplund spaces by Fabian and Cúth in \cite{rich_families_asplund}.

\section{Outline of the Manuscript}

This section provides a brief overview of the structure of the manuscript. 
Section~\ref{Section_Preliminaries} introduces the notation and mathematical preliminaries. 
Section~\ref{MainResults} presents the main results concerning limits of functions and subdifferentials, together with illustrative examples. 
In particular, Theorem~\ref{previo_a_attouch} shows that even when the $\Gamma$-limit of a sequence of functions does not exist, it is still possible to relate the lower sequential  $\Gamma$-limit of the functions with the upper sequential $\Gamma$-limit of their conjugates in the weak$^\ast$-topology. 
Although this may appear to be only an intermediate step, it represents the most technically demanding part of the paper. 
Corollary~\ref{dualidad} proves that, provided one of the limits exists, the $\Gamma$-limit of the functions coincides with the $\Gamma$-limit of their conjugates in the weak$^\ast$-topology. 
Theorem~\ref{Theo:equival}, which is the main result of the paper, establishes that these conditions are also equivalent to the (sequential) graphical convergence of the subdifferentials with respect to the strong–$w^\ast$ product topology. Finally, Section~\ref{application} shows an application of the results of Section~\ref{MainResults} to integral functionals given by convex integrands.

The remaining sections are devoted to the proofs. Section~\ref{Some_Pre_results} gathers several auxiliary results needed in the sequel. Section~\ref{Section:Proof_previo_a_attouch} develops the proof of Theorem~\ref{previo_a_attouch}, the most substantial argument of this work. The proof is first established in separable Banach spaces (Subsection~\ref{Proof_previo_a_attouch_Separable}) and then extended to the general case by means of separable reduction. The method relies on rich families and projectional skeletons: Subsection~\ref{Sec_Sep_red_rich} introduces the necessary concepts, while Subsection~\ref{section_ResultsOn_SEPRED} develops the construction of rich families that remove the separability assumption. Although it is used for technical purposes in this paper, it should be noted that Corollary \ref{surjective_isometry} is of independent interest, as it goes in line with the characterization of Asplund spaces given in \cite{rich_families_asplund}. Finally, Subsection~\ref{ProofMain_WCG} completes the argument in weakly compactly generated Banach spaces. Section~\ref{Section:Proof_dualidad} presents the short proof of Corollary~\ref{dualidad}, and Section~\ref{Section:Proof_Theo:equival} concludes with the proof of Theorem~\ref{Theo:equival}.

\section{Mathematical Preliminaries}\label{Section_Preliminaries}

\subsection{Notations and basic definitions}

In what follows, $X$ will denote a Banach space with norm $\norm{\cdot}$. Its dual space will be denoted by $\dual{X}$, and $\norm{\cdot}_\ast$ stands for the norm in $\dual{X}$. The open ball of radius $\epsilon>0$ centered at $\bar x$ is denoted by $\mathbb{B}_\epsilon(\bar x)$. For a subset $A\subseteq X$, $\operatorname{\overline{sp}}(A)$ will denote the closed linear span generated by $A$.  

We denote by $w$ the weak topology of $X$, and by $w^\ast$ the weak$^\ast$ topology of $\dual{X}$. If a sequence $(x_n)_{n\in\N}$ in $X$ converges strongly to $x \in X$, we write $x_n \to x$. Weak convergence in $X$ is denoted by $x_n \rightharpoonup x$. Similarly, if $(x_n^\ast)_{n\in\N} \subseteq \dual{X}$ converges strongly to $x^\ast \in \dual{X}$, we write $x_n^\ast \to x^\ast$, whereas $x_n^\ast \overset{\ast}{\rightharpoonup} x^\ast$ denotes weak$^\ast$ convergence.  

The Mackey topology $\dual{\mu} := \mu(\dual{X},X)$,  defined on $X^\ast$, is the topology of uniform convergence on the family of all convex, symmetric, and weakly compact subsets of $X$. For a net $(x_\alpha^\ast) \subseteq \dual{X}$, we write $x_\alpha^\ast \overset{\mu^\ast}{\to} x^\ast$ to denote convergence to $x^\ast$ in the Mackey topology.  

The Banach space $X$ is said to be a \emph{weakly compactly generated} (WCG) space if there exists a weakly compact set $A \subseteq X$ such that the linear span of $A$ is dense in $X$ with respect to the norm topology. In this definition, $A$ may be assumed, without loss of generality, to be convex and symmetric (see, e.g., \cite[Chapter~13]{Libro_funcional}). We recall that every separable or reflexive space is WCG. There are also examples of WCG spaces that are not reflexive and not necessarily separable, such as $L^1(\mu)$ for any $\sigma$-finite measure $\mu$, as well as the $C(K)$ space whenever $K$ is a compact topological space homeomorphic to a weakly compact subset of a Banach space endowed with the restriction of the weak topology (see \cite[Chapter~13 and Theorem 14.9]{Libro_funcional}).

For a function $f\colon X \to \mathbb{R} \cup \{\infty\}$, we define its (effective) \emph{domain} and \emph{epigraph} as follows:
\[
\mathrm{dom}\, f := \{\, x \in X : f(x) < \infty \,\}, 
\qquad 
\mathrm{epi}\, f := \{\, (x, r) \in X \times \mathbb{R} : f(x) \leq r \,\}.
\]

Furthermore, a function  $f\colon X\to \mathbb{R} \cup \{\infty\}$ will be called \emph{proper} if $\dom f\neq\emptyset$, it will be called \emph{convex} if $\epi f$ is a convex set, and will be called \emph{lower semicontinuous} if  $\epi f$ is closed . We can also define the

  For a   function $f\colon X\to \R\cup\{-\infty, \infty\}$ 
    its \emph{Fenchel conjugate} is defined by 
    \begin{equation*} 
    f^{\ast}(x^{\ast})  := \displaystyle\sup_{x\in X}\left\{\inner{x^{\ast}}{x} - f(x)\right\}.
    \end{equation*}
    Furthermore, for a function $h: X^\ast \to \R\cup \{ -\infty,  \infty\}$, its conjugate will be understood defined on $X$ as
    \begin{align*}
        h^\ast (x) : = \displaystyle\sup_{x^\ast\in X}\left\{\inner{x^{\ast}}{x} - h(x^\ast)\right\}.
    \end{align*}

    For a given set $C\subseteq X$ we denote by  $\delta_C$  the \emph{indicator} function of $C$, that is, the function from $X$ to $\R \cup \{\infty\}$ defined by   
      $$\delta_C (x)= \left\{ \begin{array}{lcc}
             0, &  \text{ if } & x\in C,\\
             \\ \infty, & \text{ if }& x\notin C. \\
             \end{array}
   \right.$$

The \emph{support} function of $C\subseteq X$, denoted by $\sigma_C$, is the function from $\dual{X} \to \R\cup\{\infty\}$ such that $\sigma_C(\dual{x}):= \sup\{\inner{\dual{x}}{x}: x\in C\}$. Observe that $\delta_C^{\ast}= \sigma_C$.

     Given a proper lower semicontinuous convex function $f\colon X \to \R\cup\{\infty\}$ and $\lambda>0$, we recall that the Moreau envelope of $f$ associated to $\lambda$, denoted by $f_{\lambda}$, is the following map:
    \begin{align*}
        f_{\lambda}(x) := \displaystyle\inf_{y\in X}\left\{f(y) + \frac{1}{2\lambda}\norm{x-y}^2\right\}.
    \end{align*}
    More generally, for two functions $f,g\colon X\to\R\cup\{\infty\}$, the inf-convolution is denoted by \begin{align*}
      f  \square g(x):= \inf\left\{f(x_1)+g(x_2) : x_1+ x_2 =x\right\}.
    \end{align*}

     For lsc proper convex function $f\colon X \to \mathbb{R} \cup \{\infty\}$ one denotes by $\partial f$ the graph of the subdifferential of $f$:
\[
\partial f := \{(x, x^*) \in X \times X^* : x^* \in \partial f(x)\},
\]
where $\partial f(x)$ is the subdifferential of $f$ at $x$ and is given by:
\[
\partial f(x) := \{x^* \in X^* : f(x) + \dual{f}(\dual{x}) = \inner{\dual{x}}{x}\}.
\]
We will denote by $\partial \dual{f}$ the graph of the subdifferential of $\dual{f}$ restricted to $\dual{X}\times X$ rather than $\dual{X}\times X^{**}$. So $x\in\partial \dual{f}(\dual{x})$ if and only if $\dual{x}\in\partial f(x)$.\\

\subsection{Limits of sequences of functions and operators}
 Let $(Y, \tau)$ be a topological space, and let $(f_n)_{n\in\N}$ be a sequence of functions from $Y$ into $\mathbb{R} \cup \{\infty\}$.  
We define two limit functions, called the \emph{lower sequential $\Gamma$-limit} and the \emph{upper sequential $\Gamma$-limit} of the sequence $(f_n)_{n\in\N}$ (with respect to $\tau$), as follows:
\begin{align}\label{Def_epi_limits}
\begin{aligned}
    \Gamma(\tau)\text{-}\liminf_{n} f_n(x) &:= \inf \left\{ \liminf_{n} f_n(x_n) : x_n \overset{\tau}{\to} x \right\}, \\
   \Gamma(\tau)\text{-}\limsup_{n} f_n(x) &:= \inf \left\{ \limsup_{n} f_n(x_n) : x_n \overset{\tau}{\to} x \right\},
\end{aligned}
\end{align}
where $x_n \overset{\tau}{\to} x$ denotes convergence of the sequence $(x_n)_{n\in\N}$ with respect to the topology $\tau$. Moreover, it is a known fact that when the topology $\tau$ is metrizable (or more general, first countable), then each infimum is indeed a minimum \cite[p.30, Remark 1.26]{gamma_convergence_for_beginners}.

Furthermore, we say that the sequence $(f_n)_{n\in\N}$ \emph{sequentially $\Gamma$-converges } under the topology $\tau$ if
\begin{equation*}
\Gamma(\tau)\text{-}\liminf_{n} f_n = \Gamma(\tau)\text{-}\limsup_{n} f_n.
\end{equation*}

In this case, we denote its sequential $\Gamma$-limit simply by $\Gamma(\tau)\text{-}\displaystyle\lim_{n} f_n$.

Since we are particularly interested in $\Gamma$-limits with respect to the norm topology, in this case, we omit the symbol with respect to the topology  for the sequential $\Gamma$-limits of sequences $f_n \colon X \to \mathbb{R} \cup \{\infty\}$, $n\in\N$ (or defined in $\dual{X}$ instead of $X$). That is: 
\begin{equation*}
	\begin{aligned}
		\eli f_n &:= \Gamma(\norm{\cdot})\text{-}\liminf_n f_n,\\  \els f_n &:= \Gamma(\norm{\cdot})\text{-}\limsup_n f_n,\\ \elim f_n &:= \Gamma(\norm{\cdot})\text{-}\lim_n f_n.
	\end{aligned}
\end{equation*}

Let $(S_n)_{n\in \mathbb{N}}$ be a sequence of sets in a topological space $(Y,\tau)$.  
The \emph{$\tau$-sequential lower limit}, denoted by $\Li_{\tau} S_n$, is the set of all $y \in Y$ for which there exists a sequence $(y_n)_{n\in \mathbb{N}} \subseteq Y$ with $y_n \in S_n$ for all sufficiently large $n \in \mathbb{N}$ and $y_n \xrightarrow{\tau} y$.   Similarly, the \emph{$\tau$-sequential upper limit}, denoted by $\Ls_{\tau} S_n$, is the set of all $y \in Y$ for which there exist an increasing function $j\colon\mathbb{N}\to\mathbb{N}$ and a sequence $(y_n)_{n\in \mathbb{N}}$ with $y_n \in S_{j(n)}$ such that $y_n \xrightarrow{\tau} y$.

We now introduce the notion of \emph{graphical convergence} for set-valued mappings by identifying them with their graphs. Recall that for a set-valued mapping $A\colon Y \to Z$, its graph is defined as
\[
    \gph A := \{ (y,z) \in Y \times Z : z \in A(y) \}.
\]
Given a sequence of set-valued mappings $(A_n)_{n \in \mathbb{N}}$ from $Y$ to $Z$, the \emph{$\tau_Y \times \tau_Z$-sequential lower limit} and the \emph{$\tau_Y \times \tau_Z$-sequential upper limit} are defined as the set-valued mappings
\[
    \Li_{\tau_Y \times \tau_Z} A_n
    \quad \text{and} \quad
    \Ls_{\tau_Y \times \tau_Z} A_n,
\]
whose graphs are the sets
\[
    \Li_{\tau_Y \times \tau_Z} \gph A_n
    \quad \text{and} \quad
    \Ls_{\tau_Y \times \tau_Z} \gph A_n,
\]
respectively.

Moreover, we say that a sequence of set-valued mappings $A_n \colon Y \rightrightarrows Z$, $n\in\N$, 
\emph{sequentially $\tau_Y \times \tau_Z$-converges} to a set-valued mapping 
$A \colon Y \rightrightarrows Z$, and we write
\begin{equation}\label{def_graph_conv}
    A_n \xrightarrow{ G(\tau_Y \times \tau_Z)} A,
\end{equation}
if
\[
    \Ls_{\tau_Y \times \tau_Z} A_n \subseteq \gph A \subseteq \Li_{\tau_Y \times \tau_Z} A_n,
\]
which is the same as  \[
    \Ls_{\tau_Y \times \tau_Z} A_n = \gph A = \Li_{\tau_Y \times \tau_Z} A_n.
\]

\section{Limits of convex functions and subdifferentials}\label{MainResults}

In this section we study the relation between the $\Gamma$-convergence of a sequence of convex functions and the $w^\ast$-sequential limit of its conjugate functions, in the sense of \eqref{Def_epi_limits}.

The first result of this section provides a characterization of the conjugate of the lower sequential  $\Gamma$limit of a sequence of functions,  as the upper sequential $\Gamma$-limit of their corresponding conjugates with respect to the weak$^\ast$-topology. The proof of this result will be presented in Section \ref{Section:Proof_previo_a_attouch}.

\begin{theorem}\label{previo_a_attouch}
	Let $X$ be a   WCG Banach space, and let $f_n\colon X \to \R \cup \{ \infty\}$, $n\in\N$, be a sequence of lower semicontinuous proper convex functions such that the sequence $(f_n^*)_{n\in\N}$ satisfies the  equicoercivity property 
	\begin{align}\label{equicoercividad}
		 (x_n^*)_{n\in\N} \subseteq X^*, \quad \sup_{n \in \mathbb{N}} f_n^*(x_n^*) < \infty \implies \sup_{n \in \mathbb{N}} \|x_n^*\|_{\ast} < \infty.
	\end{align}
 Then it holds that $$(\eli f_n)^* = \wels f_n^*,$$ provided that    $\dom \els f_{n}\neq \emptyset$. 
\end{theorem}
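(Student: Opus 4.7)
The plan is to prove the two inequalities $g^*\le h$ and $h\le g^*$ separately, where I write $g:=\eli f_n$ and $h:=\wels f_n^*$. Since $\dom(\els f_n)\subseteq\dom g\ne\emptyset$ and the $\Gamma$-liminf of convex functions is itself convex and lower semicontinuous, $g$ is a proper lsc convex function, so $g^*$ is well defined on $X^*$.

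The easy inequality $g^*\le h$ is a direct Fenchel--Young computation. Fix $x^*\in X^*$ and any sequence $x_n^*\overset{\ast}{\rightharpoonup}x^*$; by Banach--Steinhaus this sequence is norm-bounded. For any $x\in X$ and any $x_n\to x$ in norm, $\sup_n\|x_n^*\|_*<\infty$ together with $\|x_n-x\|\to 0$ implies $\langle x_n^*,x_n\rangle\to\langle x^*,x\rangle$. Fenchel--Young then yields $\langle x_n^*,x_n\rangle-f_n(x_n)\le f_n^*(x_n^*)$; taking $\limsup_n$, infimizing over primal sequences $x_n\to x$, supremizing over $x\in X$, and finally infimizing over $w^*$-sequences $(x_n^*)$ produces $g^*(x^*)\le h(x^*)$.

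For the hard inequality $h\le g^*$, given $\bar x^*\in\dom g^*$ one must construct a recovery sequence $(x_n^*)$ with $x_n^*\overset{\ast}{\rightharpoonup}\bar x^*$ and $\limsup_n f_n^*(x_n^*)\le g^*(\bar x^*)$. Following the outline of Section~\ref{Section:Proof_previo_a_attouch}, I would first settle the separable case and then lift to general WCG $X$ by separable reduction. In separable $X$, bounded subsets of $X^*$ are $w^*$-metrizable, so equicoercivity \eqref{equicoercividad} makes the $w^*$-analysis sequential. The strategy is a Moreau-envelope regularization at $\bar x^*$: either minimize $y^*\mapsto f_n^*(y^*)+\tfrac{1}{2\lambda}\|y^*-\bar x^*\|_*^2$ or, equivalently by Fenchel duality, $\inf_x\{f_n(x)+\tfrac{\lambda}{2}\|x\|^2-\langle\bar x^*,x\rangle\}$. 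This produces approximate dual minimizers $x_{n,\lambda}^*$ whose $f_n^*$-value can be bounded, via Fenchel--Young applied to the associated primal minimizers together with the $\Gamma$-liminf structure of the $f_n$, by $g^*(\bar x^*)+O(\lambda)$. Equicoercivity forces $(x_{n,\lambda}^*)$ into a norm-bounded, hence $w^*$-precompact, set, and a diagonal extraction as $\lambda\downarrow 0$ produces a $w^*$-convergent sequence whose limit is identified with $\bar x^*$ by the strict convexity of the quadratic penalty. For general WCG $X$, the rich-family and projectional-skeleton machinery of Subsections~\ref{Sec_Sep_red_rich}--\ref{section_ResultsOn_SEPRED} supplies enough closed separable subspaces on which $f_n$ and $f_n^*$ restrict compatibly, lifting the separable identity back to $X$.

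The main obstacle is the separable case, specifically identifying the $w^*$-cluster point of the approximate minimizers as exactly $\bar x^*$---not some other point at the same level of $h$---while simultaneously controlling $\limsup_n f_n^*(x_{n,\lambda}^*)$. This forces an interplay between the strict convexity of the quadratic penalty and the one-sided nature of the $\Gamma$-liminf (the $f_n$ are not assumed to $\Gamma$-converge), and it is also the reason why equicoercivity is indispensable. The subsequent separable reduction to WCG spaces, though lengthy, is conceptually standard once the correct rich families are in place; this is precisely where the WCG hypothesis---rather than mere separability or reflexivity of $X$---becomes essential.
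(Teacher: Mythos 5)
Your overall architecture (easy Fenchel--Young inequality, then a separable-case construction lifted to WCG spaces by rich families and projectional skeletons) matches the paper, and your easy direction $(\eli f_n)^*\le\wels f_n^*$ is exactly the paper's Lemma~\ref{Lemma:inq_01conjugate}. However, the core of your hard direction has a genuine gap. You regularize only with the quadratic term, i.e.\ you compare $\liminf_n\inf_{x\in X}\{f_n(x)+\tfrac{\lambda}{2}\norm{x}^2-\inner{\bar x^*}{x}\}$ with $\inf_{x\in X}\{g(x)+\tfrac{\lambda}{2}\norm{x}^2-\inner{\bar x^*}{x}\}$, claiming this follows ``via Fenchel--Young applied to the associated primal minimizers together with the $\Gamma$-liminf structure.'' This is precisely the step that fails in infinite dimensions: the quadratic penalty makes the primal (approximate) minimizers bounded, but not norm-precompact, and $\eli f_n$ is a $\Gamma$-liminf for the \emph{norm} topology, so the liminf inequality cannot be invoked along a sequence of minimizers that has no strongly convergent subsequence (using weak limits instead is exactly the difference between Mosco and norm-$\Gamma$ convergence, and is not available here). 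Equicoercivity of $(f_n^*)$ does not repair this, since it controls dual sublevel sets, not primal compactness. The paper resolves this by additionally truncating with norm-compact convex sets $B_k$ built from the span of finitely many points of a countable dense set together with the sequence $(a_n)\cup\{a\}$ witnessing $\dom\els f_n\neq\emptyset$ (so that the truncated functions stay proper); compactness of $B_k$ is what makes the analogue of your comparison (Claim~1) true.

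This truncation then forces the remaining, genuinely technical work that your proposal does not account for: the conjugate of $f_n+\delta_{B_k}+\tfrac{\lambda}{2}\norm{\cdot}^2$ is only a \emph{$w^*$-closed} inf-convolution $\cl((f_n^*)_\lambda\,\square\,\sigma_{B_k})$, so one must pass from nets to sequences using the countable dense set (yielding dual points $u_n^*$ with $\max_{1\le j\le n}|\inner{\bar x^*-u_n^*}{x_j}|\le 1/n$), perform the diagonal extraction in $k$ and $\lambda$, use equicoercivity plus the points $a_n$ to bound the approximate dual minimizers $y_n^*$ and bound $f_n^*(y_n^*)$ from below, and finally identify the $w^*$-limit of $(y_n^*)$ as $\bar x^*$ through the estimate $\sigma_{A_{k_n}}(u_n^*-x_n^*)=k_n\,d(u_n^*-x_n^*,E_{k_n}^\perp)$, which forces these distances to vanish. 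In particular, your suggestion that the limit is identified ``by the strict convexity of the quadratic penalty'' is not a workable mechanism (the dual norm need not be strictly convex, and uniqueness of minimizers is not the issue); the identification in the paper comes from the finite-dimensional control against the dense set combined with the annihilator-distance estimate. Until the compact truncation and the subsequent dual-side analysis are supplied, the separable case---and hence the whole theorem---is not proved by your argument.
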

 {\color{red}
 The following example emphasizes
that the condition $\dom \els f_n \neq \emptyset$ is necessary for the validity of Theorem~\ref{previo_a_attouch}.

\begin{example}
	Consider the sequence of functions $f_n(x) := \tfrac{1}{n}|x| + n$.  
	On one hand, we have
	\begin{align*}
		f(x) := \eli f_n(x) = \infty,
	\end{align*} 
	which implies $f^\ast \equiv -\infty$.  
	On the other hand, 
	\begin{align*}
		f_n^\ast(x^\ast) = \delta_{[-\tfrac{1}{n}, \tfrac{1}{n}]}(x^\ast) - n,
	\end{align*}
	and therefore
	\begin{equation*}
		\wels f_n^\ast(x^\ast) = 
		\begin{cases}
			- \infty & \text{if } x^\ast = 0, \\[0.3em]
			 \infty & \text{if } x^\ast \neq 0.
		\end{cases}
	\end{equation*}
	Hence, $f^\ast \neq \wels f_n^\ast$.
\end{example}

 }
 
In the following example, we show that, in the last corollary, the assumption that $X$ is WCG cannot, in general, be removed for an arbitrary Banach space $X$.

\begin{example}\label{Ex:necesidad_WCG}
Let $X$ be a Banach space, and suppose there exists a sequence of uniformly bounded, $w^\ast$-closed, convex sets $C_n \subset X^\ast$ such that $C_n \subset C_{n+1}$ for all $n \in \mathbb{N}$, and  
\[
 \Ls_{w^\ast} C_n \neq C := \cl^{w^\ast}\!\Big( \bigcup_{n \in \mathbb{N}} C_n \Big).
\]
Define functions $f, f_n : X \to \mathbb{R}$, $n\in\N$, by
\[
f_n(x) := \sigma_{C_n}(x), 
\qquad 
f(x) := \sigma_{C}(x).
\]
It is straightforward to verify that $\eli f_n = f$, which in particular implies
\[
\dom(\els f_n) \neq \emptyset.
\]

Moreover, since $f_n^\ast = \delta_{C_n}$, the uniform boundedness of the sets $C_n$ immediately yields the equicoercivity property \eqref{equicoercividad}.  

Now, let $x^\ast \in C \setminus \Ls_{w^\ast} C_n$. On one hand, we have $f^\ast(x^\ast) = 0$. On the other hand, suppose that there exists a sequence $x_n^\ast \to x^\ast$ with $\displaystyle\limsup_n \dual{f}_n(\dual{x}_n) < \infty$. Then the inclusion $x_n^\ast \in C_{n}$ must hold for all sufficiently large $n \in \mathbb{N}$; otherwise, we would obtain $\displaystyle\limsup_n \dual{f}_n(\dual{x}_n) = \infty$. However, this would imply $x^\ast \in  \Ls_{w^\ast} C_n$, which is a contradiction. Therefore, the only possible alternative is  
\[
\limsup_{n} f_n^\ast(x_n^\ast) = \infty.
\]
By the arbitrariness of the sequence $x_n^\ast \to x^\ast$, we conclude that $\wels f_n(x^\ast)=\infty$. Consequently, Theorem~\ref{previo_a_attouch} does not hold in this setting.

\medskip

To illustrate this with a concrete example, let us consider $X=\ell_\infty(\N)$, and define the sets
\begin{equation*}
C_k := \left\{  (x_i)_{ i \in \N } \in \ell_1(\N)  : x_j =0 \text{ for all } j \geq k,\ \sum_{ j\in \N} | x_j| \leq 1 \right\}. 
\end{equation*}
These sets can be regarded as subsets of $X^\ast$, using the identification $\ell_1(\N) \subset X^\ast = \dual{\ell_\infty(\mathbb{N})} = \left(\ell_1(\N)\right)^{\ast\ast}$. It is not difficult to show that each $C_k$ is $w^\ast$-closed and convex. Furthermore, by Goldstine’s theorem (see, e.g., \cite[Theorem 3.96]{Libro_funcional}), we obtain
\begin{align*}
    C := \cl^{w^\ast}\left( \bigcup_{n \in \mathbb{N}} C_n \right) 
    =  \mathbb{B}_{X^\ast} :=\{ x^\ast \in X^\ast : \| x^\ast \|_{\ast} \leq 1  \}.
\end{align*}

Finally, by the Odell--Rosenthal theorem (see, e.g., \cite{MR377482} or \cite[Chapter XIII, Theorem 10]{MR737004}), the closed unit ball in $\ell_1(\N)$ is not weak$^\ast$ sequentially dense in the closed unit ball of $X^\ast$. Particularly, it implies that 
\[
\Ls_{w^\ast} C_n \neq C.
\]

	\end{example}

\begin{remark}\label{Remark_equicoerc}
It is worth noting that the equicoercivity property given in \eqref{equicoercividad} is true when assuming that for each $t\in\R$ there is a $\dual{w}$-compact set $K_t\subset\dual{X}$ such that for every $n\in\N$, $\{\dual{x}\in\dual{X}: \dual{f}_n(\dual{x})\leq t\}\subseteq K_t$, which is a standard hypothesis to assure convergence of minima in $\Gamma$-convergence. Also, it can be readily verified under the stronger assumption of \emph{uniform coercivity}, that is,  there exist constants $\alpha > 0$ and $\beta \in \mathbb{R}$ such that  
\begin{align}\label{Unif:equicoer}
    f_n^\ast(x^\ast) \geq \alpha \|x^\ast\| + \beta,
    \quad \text{for all } x^\ast \in X^\ast \text{ and all } n \in \mathbb{N}.
\end{align}
By taking the Fenchel conjugate in \eqref{Unif:equicoer} (see, e.g., \cite[Fact~4.4.9]{Libro_borwein}), one obtains that this condition is equivalent to the uniform boundedness of the functions in a neighborhood of the origin, which means, there exist $\rho, \varepsilon > 0$ such that  
\begin{align}\label{Unf_boundeness}
    f_n(x) \leq \rho,
    \quad \text{for all } x \in \mathbb{B}_\varepsilon(0) \text{ and all } n \in \mathbb{N}.
\end{align}
Furthermore, by \cite[Theorem~2.2.22]{zalinescu_convex_analysis}, this is in turn equivalent to the existence of a convex open neighborhood $U$ of the origin such that, for every $x \in U$, the sequence $(f_n(x))_{n \in \mathbb{N}}$ is bounded. Moreover, in this last case, i.e, when condition \eqref{Unf_boundeness} is satisfied, the $\Gamma$-limit of the sequence $(f_n)_{n\in\N}$ coincides with its pointwise limit in $\mathbb{B}_\varepsilon(0)$ (see \cite[Proposition~5.12]{dalmaso1993introduction}).
\end{remark}

Now, let us recall the following result provided in \cite{AubinFrankowska1990}[Theorem~7.5.1] and attributed to Mosco (see also \cite[Theorem~2]{penot2004continuity}). 

\begin{theorem} \label{teorema_2} 
	Let $X$ be a Banach space and let $f_n\colon X \to \R \cup \{ \infty\}$, $n\in\N$, be a sequence of lower semicontinuous proper convex functions.  
	Suppose there exists a bounded sequence $(x_n^\ast)_{n\in\N}$ in $X^\ast$ such that the sequence $(f_n^\ast(x_n^\ast))_{n\in\N}$ is bounded above.  
	Then the following equality holds:
	\begin{align*}
		(\weli f_n^\ast)^* = \els f_n.
	\end{align*}
\end{theorem}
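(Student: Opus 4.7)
The strategy is to prove two opposite inequalities between $\els f_n$ and $(\weli f_n^\ast)^\ast$. The ``easy'' inequality $(\weli f_n^\ast)^\ast \leq \els f_n$ is established directly from Fenchel--Young and requires no assumption on the sequence $(x_n^\ast)$, while the reverse inequality uses the bounded-sequence hypothesis in an essential way.

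For the easy inequality, fix $x \in X$ and $x^\ast \in X^\ast$. For any $x_n \to x$ in norm and any $x_n^\ast \xrightarrow{w^\ast} x^\ast$, the Fenchel--Young inequality gives
\begin{equation*}
f_n(x_n) \geq \inner{x_n^\ast}{x_n} - f_n^\ast(x_n^\ast).
\end{equation*}
By the uniform boundedness principle $(x_n^\ast)$ is norm-bounded, hence $\inner{x_n^\ast}{x_n} \to \inner{x^\ast}{x}$. Passing to $\limsup$ on the left, taking infimum on the right over the sequence $(x_n^\ast)$, and then supremum over $x^\ast \in X^\ast$, one obtains $\els f_n(x) \geq (\weli f_n^\ast)^\ast(x)$ for every $x$.

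For the reverse inequality, I will first use the hypothesis to ensure that $\els f_n$ is a proper, norm-lsc, convex function. Since $\sup_n \|x_n^\ast\|_\ast =: M < \infty$ and $\sup_n f_n^\ast(x_n^\ast) =: C < \infty$, Fenchel--Young yields $f_n(y) \geq -M\|y\|-C$ uniformly in $n$, and therefore $\els f_n(y) \geq -M\|y\|-C$, so $\els f_n > -\infty$ everywhere. Convexity of $\els f_n$ follows from the convexity of each $f_n$ via a standard midpoint/averaging argument, and norm-lower semicontinuity is automatic for sequential $\Gamma$-limsups. Once $\els f_n$ is established to be proper lsc convex, the Fenchel--Moreau theorem gives $\els f_n = (\els f_n)^{\ast\ast}$, and it will suffice to prove $(\els f_n)^\ast \geq \weli f_n^\ast$, because then, taking conjugates reverses the order and yields $\els f_n = (\els f_n)^{\ast\ast} \leq (\weli f_n^\ast)^\ast$.

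The main obstacle is the construction step $(\els f_n)^\ast(x^\ast) \geq \weli f_n^\ast(x^\ast)$: given $x^\ast$, I must exhibit a $w^\ast$-convergent sequence $x_n^\ast \xrightarrow{w^\ast} x^\ast$ with $\liminf_n f_n^\ast(x_n^\ast) \leq (\els f_n)^\ast(x^\ast)$. The natural attempt $x_n^\ast \equiv x^\ast$ fails because pointwise $f_n^\ast(x^\ast)$ can be much larger than $(\els f_n)^\ast(x^\ast)$. The idea I would pursue is a Moreau--Yosida regularization in the dual: introduce $f_{n,\lambda}:= f_n \,\square\, \tfrac{1}{2\lambda}\|\cdot\|^2$ with conjugates $f_n^\ast + \tfrac{\lambda}{2}\|\cdot\|_\ast^2$, approximately attain the value $(\els f_n)^\ast(x^\ast)$ by a well-chosen point $\bar x$ and use a recovery sequence $x_n \to \bar x$ for $\els f_n(\bar x)$; then select $x_n^\ast$ as an (approximate) subgradient of $f_n$ at $x_n$, which automatically gives by Fenchel--Young an equality $f_n(x_n)+f_n^\ast(x_n^\ast)=\inner{x_n^\ast}{x_n}$ and hence control of $f_n^\ast(x_n^\ast)$ in terms of $f_n(x_n)$ and $\inner{x_n^\ast}{x_n}$. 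The key technical points will be (i) securing $w^\ast$-convergence of the constructed $x_n^\ast$ to $x^\ast$, which uses both the norm-boundedness provided by the hypothesis and a diagonalization as $\lambda \downarrow 0$, and (ii) ensuring that these approximate subgradients exist in a sufficiently quantitative form (for instance via the Brøndsted--Rockafellar theorem). This second step is where the bounded-sequence assumption is genuinely used, since it prevents the dual Moreau regularizations from drifting to infinity.
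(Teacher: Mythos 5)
A preliminary remark: the paper itself does not prove Theorem~\ref{teorema_2}; it quotes it from Mosco and Joly and refers to Penot for a proof, so your attempt can only be measured against that standard argument, not against an in-paper proof. Your first half is fine: $(\weli f_n^\ast)^\ast \le \els f_n$ follows from Fenchel--Young exactly as you write and needs no hypothesis. The genuine gap is the reduction of the reverse inequality to the claim $(\els f_n)^\ast \ge \weli f_n^\ast$. That claim is strictly stronger than the theorem and is false in general: $(\els f_n)^\ast$ is always convex and $w^\ast$-lower semicontinuous, whereas $\weli f_n^\ast$ need be neither (a sequential $\Gamma$-liminf of convex functions is in general not convex), and from your own easy inequality one always has $(\els f_n)^\ast \le (\weli f_n^\ast)^{\ast\ast} \le \weli f_n^\ast$, so your target would force $\weli f_n^\ast$ to coincide with its $w^\ast$-closed convex hull. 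Concretely, take $X=\R$, $f_{2n}=|\cdot|$, $f_{2n+1}=|\cdot-2|$ and $x_n^\ast=0$, so the hypothesis holds. Then $\els f_n=\max\{|\cdot|,|\cdot-2|\}$ and $\weli f_n^\ast=\delta_{[-1,1]}+\min\{0,2\,\cdot\}$, which is not convex, and $(\els f_n)^\ast(0)=-1<0=\weli f_n^\ast(0)$; nevertheless $(\weli f_n^\ast)^\ast=\max\{|\cdot|,|\cdot-2|\}=\els f_n$, so the theorem holds while your intermediate inequality fails. The point is that conjugation only sees the $w^\ast$-closed convex hull of $\weli f_n^\ast$, so the hard direction must estimate $(\weli f_n^\ast)^\ast$ directly (for instance by separating a point $(x,r)$ with $r<\els f_n(x)$ from suitable epigraphs, the uniform affine minorant $f_n\ge\langle x_n^\ast,\cdot\rangle-C$ supplied by the hypothesis being exactly what makes such an argument work), rather than trying to realize the value $\weli f_n^\ast(x^\ast)$ for every prescribed $x^\ast$ by dual recovery sequences.

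Two further, secondary issues. First, even for your (unachievable) intermediate claim, the construction gives no mechanism forcing the approximate subgradients $x_n^\ast\in\partial_{\varepsilon_n}f_n(x_n)$ to be norm-bounded, let alone $w^\ast$-convergent to the prescribed $x^\ast$: the hypothesis provides a single bounded sequence on which the conjugates are bounded above, not the equicoercivity \eqref{equicoercividad} used elsewhere in the paper, and in a nonseparable $X$ bounded sequences need not even admit $w^\ast$-convergent subsequences. Second, the Fenchel--Moreau step requires $\els f_n$ to be proper, but $\dom(\els f_n)\neq\emptyset$ is not among the hypotheses (e.g.\ $f_n=\|\cdot\|+n$ satisfies them with $\els f_n\equiv\infty$), so the degenerate case would need a separate argument.
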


As an immediate consequence of the two preceding results, we obtain a dual formula that relates the $\Gamma$-limit in the strong topology to the $\Gamma$-limit of the corresponding conjugates with respect to the weak$^\ast$ topology. The proof of this result is presented in Section \ref{Section:Proof_dualidad}.

\begin{corollary}\label{dualidad}
	Let $X$ be a Banach space and let  
	$f, f_n\colon X \to \R \cup \{ \infty\}$, $n\in\N$, be   lower semicontinuous proper convex functions. Then, 
    \begin{align*}
    f^{\ast} = \welim f_n^{\ast} \Rightarrow f = \elim f_n.
    \end{align*}
    Additionally, assume  that $X$  is a WCG Banach space and the sequence $(f_n^{\ast})_{n\in\N}$ satisfies the equicoercivity property given in \eqref{equicoercividad}. Then 
	\begin{align*}
		f = \elim f_n  
		\quad \Longleftrightarrow \quad 
		f^{\ast} = \welim f_n^{\ast}.
	\end{align*}
\end{corollary}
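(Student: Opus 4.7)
The plan is to deduce Corollary~\ref{dualidad} from Theorem~\ref{previo_a_attouch} and Theorem~\ref{teorema_2}, supplemented by a direct Fenchel--Young argument for one of the inequalities appearing in the first implication. For the first (unconditional) implication, assume $f^\ast = \welim f_n^\ast$. Since $f$ is proper, lsc and convex, $\dom f^\ast \neq \emptyset$; picking $\bar x^\ast$ there, the equality $\wels f_n^\ast(\bar x^\ast) = f^\ast(\bar x^\ast) < \infty$ produces a sequence $x_n^\ast \overset{\ast}{\rightharpoonup} \bar x^\ast$ with $\limsup_n f_n^\ast(x_n^\ast) < \infty$, which is automatically norm bounded by Banach--Steinhaus. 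After adjusting finitely many terms this meets the hypothesis of Theorem~\ref{teorema_2}, yielding
\[
    \els f_n = (\weli f_n^\ast)^\ast = (f^\ast)^\ast = f
\]
via Fenchel--Moreau. To obtain the matching lower bound $\eli f_n \geq f$, I would argue directly: fix $x \in X$, $x_n \to x$, $x^\ast \in \dom f^\ast$ and, using $\wels f_n^\ast(x^\ast) = f^\ast(x^\ast)$, pick $y_n^\ast \overset{\ast}{\rightharpoonup} x^\ast$ with $\limsup_n f_n^\ast(y_n^\ast) \leq f^\ast(x^\ast) + \varepsilon$; the Fenchel--Young inequality $f_n(x_n) \geq \langle y_n^\ast, x_n\rangle - f_n^\ast(y_n^\ast)$ together with $\langle y_n^\ast, x_n\rangle \to \langle x^\ast, x\rangle$ (norm boundedness of $(y_n^\ast)$ and $x_n \to x$), followed by $\liminf$, $\varepsilon \to 0$, infimum over $x_n \to x$, and supremum over $x^\ast$, delivers $\eli f_n(x) \geq f^{\ast\ast}(x) = f(x)$.

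For the reverse implication, assume additionally that $X$ is WCG, that $(f_n^\ast)$ is equicoercive, and that $f = \elim f_n$. Since $\dom \els f_n = \dom f \neq \emptyset$, Theorem~\ref{previo_a_attouch} gives $(\eli f_n)^\ast = \wels f_n^\ast$, i.e.\ $\wels f_n^\ast = f^\ast$. Invoking Theorem~\ref{teorema_2} exactly as before (the equicoercivity hypothesis making this painless), we obtain $(\weli f_n^\ast)^\ast = \els f_n = f$, and conjugating once more $(\weli f_n^\ast)^{\ast\ast} = f^\ast$. Since the biconjugate is always a minorant of the original function, $f^\ast \leq \weli f_n^\ast$; combined with the trivial $\weli f_n^\ast \leq \wels f_n^\ast = f^\ast$, this forces $\welim f_n^\ast = f^\ast$.

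The only delicate point in this scheme is the direct Fenchel--Young argument for $\eli f_n \geq f$ in the first implication, since Theorem~\ref{teorema_2} alone only furnishes the $\limsup$-side equality $\els f_n = f$ and Theorem~\ref{previo_a_attouch} is unavailable without the WCG hypothesis. All remaining steps are routine manipulations with conjugate duality.
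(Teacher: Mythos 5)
Your proposal is correct and follows essentially the same route as the paper's proof: Theorem~\ref{teorema_2} (after producing a $w^\ast$-convergent, hence bounded, sequence with conjugate values bounded above) yields the $\els$-side identity, and Theorem~\ref{previo_a_attouch} together with biconjugation handles the converse under the WCG and equicoercivity assumptions. The only cosmetic difference is that your direct Fenchel--Young argument for $\eli f_n \geq f$ re-derives inline what the paper gets by citing Lemma~\ref{Lemma:inq_01conjugate} combined with $(\eli f_n)^{\ast\ast} \leq \eli f_n$.
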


Next, we establish a connection between the $\Gamma$-limit of a sequence of convex functions and the graphical convergence of the subdifferentials (see \eqref{def_graph_conv}). It is worth noting that the $\|\cdot\|\times w^\ast$ sequential convergence of subdifferentials has already been investigated in \cite{Zagrodny2005WeakStarConvergenceSubdifferentials}. More precisely, in \cite[Theorem 3.1]{Zagrodny2005WeakStarConvergenceSubdifferentials} the authors proved that if a sequence of proper, lower semicontinuous convex functions Mosco-converges to $f$ and satisfies \eqref{Unf_boundeness}, then the corresponding subdifferentials converge with respect to $\|\cdot\|\times w^\ast$ in the sense of \eqref{def_graph_conv}.

In what follows, we show that this implication can be strengthened and is sharper. 
The proof of this equivalence is deferred to Section~\ref{Section:Proof_Theo:equival}.

\begin{theorem}\label{Theo:equival}
	Let $X$ be a WCG space, and let $f, f_n\colon X\colon \to \mathbb{R} \cup \{\infty\}$, $n \in \mathbb{N}$, be proper lower semicontinuous convex functions. 
Consider the following statements:
\begin{enumerate}[label=(\alph*)]
    \item $\dual{f} = \welim \dual{f}_n$.
    \item $\partial f_n\xrightarrow{G(\|\cdot\| \times \dual{w})} \partial f$ 
    and there exists $(a,a^*) \in \partial f$ and a sequence $(a_n,a_n^*) \in \partial f_n $ such that
    $a_n \to a,\; 
  a_n^* \overset{\ast}{\rightharpoonup} a^* \text{ and }  
    f_n(a_n) \to f(a).$
    \item $f = \elim f_n$.
\end{enumerate}
Then, \((a) \Rightarrow (b) \Rightarrow (c)\). Moreover, if the sequence $(\dual{f}_n)_{n\in\N}$ satisfies the equicoercivity condition \eqref{equicoercividad}, 
then the three statements are equivalent.
	
\end{theorem}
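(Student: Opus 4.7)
The plan is to establish $(a)\Rightarrow(b)\Rightarrow(c)$ directly, while the closing implication $(c)\Rightarrow(a)$ under equicoercivity is exactly Corollary~\ref{dualidad}. For $(a)\Rightarrow(b)$, Corollary~\ref{dualidad}, whose first implication requires no equicoercivity, gives $f=\elim f_n$, so both the primal and dual sequences epi-converge in the respective topologies. For the upper graph inclusion, the Fenchel--Young identity $f_n(x_n)+\dual{f}_n(x_n^\ast)=\inner{x_n^\ast}{x_n}$ has right-hand side convergent to $\inner{x^\ast}{x}$ (by Banach--Steinhaus boundedness of $(x_n^\ast)$), while the two $\Gamma$-$\liminf$ inequalities add to $\liminf_n[f_n(x_n)+\dual{f}_n(x_n^\ast)]\geq f(x)+\dual{f}(x^\ast)$; Fenchel--Young forces equality throughout, so $(x,x^\ast)\in\partial f$. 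For the lower inclusion, given $(x,x^\ast)\in\partial f$, pick recovery sequences for the two $\Gamma$-limsups so that $f_n(x_n)\to f(x)$ and $\dual{f}_n(x_n^\ast)\to\dual{f}(x^\ast)$; the Fenchel--Young gap $\delta_n:=f_n(x_n)+\dual{f}_n(x_n^\ast)-\inner{x_n^\ast}{x_n}$ tends to $0$, and Br\o{}ndsted--Rockafellar applied with tolerance $\sqrt{\delta_n}$ produces $(\tilde x_n,\tilde x_n^\ast)\in\partial f_n$ within norm distance $\sqrt{\delta_n}$ of $(x_n,x_n^\ast)$, hence converging to $(x,x^\ast)$ in $\norm{\cdot}\times\dual{w}$. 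The coherent pair is obtained by specializing this to any $(a,a^\ast)\in\partial f$, nonempty on a norm-dense subset of $\dom f$ by Br\o{}ndsted--Rockafellar; the convergence $f_n(a_n)\to f(a)$ follows because $f_n(a_n)+\dual{f}_n(a_n^\ast)\to f(a)+\dual{f}(a^\ast)$ while each $\liminf$ dominates the corresponding limit, forcing the $\limsup$s to match as well.

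The implication $(b)\Rightarrow(c)$ is the technical heart. I would reconstruct function values by chaining subgradient inequalities of $f_n$ along a polygonal path from the coherent anchor $a$ to any target $x\in\dom f$. Given $y_n\to x$ and a partition $0=t_0<\cdots<t_k=1$ with intermediate points $z_i:=a+t_i(x-a)$ chosen in the norm-dense set $\dom(\partial f)$, pick $v_i^\ast\in\partial f(z_i)$ (with $v_0^\ast=a^\ast$) and, using the lower inclusion from (b), lift to $(z_{i,n},v_{i,n}^\ast)\in\partial f_n$ with $z_{i,n}\to z_i$ and $v_{i,n}^\ast\overset{\ast}{\rightharpoonup} v_i^\ast$. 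Iterating the subgradient inequality yields
\begin{align*}
    f_n(y_n)\geq f_n(a_n)+\sum_{i=0}^{k-1}\inner{v_{i,n}^\ast}{z_{i+1,n}-z_{i,n}}+\inner{v_{k,n}^\ast}{y_n-z_{k,n}},
\end{align*}
which passes to the $\liminf_n$ (each $(v_{i,n}^\ast)_n$ is bounded by Banach--Steinhaus), giving $\liminf_n f_n(y_n)\geq f(a)+\sum_i (t_{i+1}-t_i)\inner{v_i^\ast}{x-a}$. Choosing $v_i^\ast$ approaching the right directional derivative $f'(z_i;x-a)$ and refining the partition, Rockafellar's mean value theorem identifies the supremum of the right-hand side as $f(x)$. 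The $\limsup$ direction is symmetric: the companion bound $f_n(z_{i+1,n})\leq f_n(z_{i,n})+\inner{v_{i+1,n}^\ast}{z_{i+1,n}-z_{i,n}}$ chained from the anchor, followed by a diagonal extraction over partitions and the index $n$, yields an explicit sequence $y_n\to x$ with $\limsup_n f_n(y_n)\leq f(x)$. Target points in $\dom f\setminus\dom(\partial f)$ are handled by norm-density of $\dom(\partial f)$ in $\dom f$ together with the lower semicontinuity of $f$.

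The main obstacle lies in the chain argument of $(b)\Rightarrow(c)$, where one must (i) keep the lifted subgradients bounded for each fixed partition index, which is handled level by level via Banach--Steinhaus, (ii) approximate optimal subgradient selections inside $\partial f(z_i)$ even when this set fails to be weak$^\ast$-compact, and (iii) coordinate the diagonal extraction between partition refinement and the index $n$ so that the constructed recovery sequence is genuinely indexed by $n$ and realizes the desired $\limsup$ bound. The $(a)\Rightarrow(b)$ step, by contrast, is essentially routine once Corollary~\ref{dualidad} is in hand, and $(c)\Rightarrow(a)$ under equicoercivity is immediate from the same corollary.
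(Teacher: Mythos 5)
Your handling of $(a)\Rightarrow(b)$ and of the equivalence under equicoercivity is correct and essentially the paper's argument: Corollary~\ref{dualidad} supplies the primal epi-convergence, the Fenchel--Young identity combined with the two $\Gamma$-liminf inequalities gives the upper graph inclusion, and Br\o{}ndsted--Rockafellar applied to the vanishing Fenchel--Young gap gives the lower inclusion together with the normalized anchor (your observation that $f_n(a_n)+\dual{f}_n(a_n^\ast)=\inner{a_n^\ast}{a_n}\to f(a)+\dual{f}(a^\ast)$ while each liminf dominates is a clean way to get $f_n(a_n)\to f(a)$).

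The genuine gap is in $(b)\Rightarrow(c)$. Your polygonal chaining needs subgradients of $f$ at the intermediate nodes $z_i=a+t_i(x-a)$, and norm-density of $\dom(\partial f)$ in $\dom f$ does not provide this: a prescribed segment can contain no points of subdifferentiability at all. For instance, on $\ell^2$ take $f(x)=\sum_n n|x_n|$; at every $x\in\dom f$ with infinite support one has $\partial f(x)=\emptyset$ (any subgradient would need coordinates $n\,\sgn(x_n)$, which cannot lie in $\ell^2$), so for suitable $x$ the whole open segment $(0,x]$ misses $\dom(\partial f)$ even though this is a separable Hilbert space, well inside the theorem's scope. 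If you instead perturb the nodes off the segment, the one-dimensional monotone-derivative/mean-value argument that identifies your Riemann sums with $f(x)-f(a)$ no longer applies, and the resulting error terms are multiplied by $\|v_i^\ast\|_\ast$, which is not controlled (subgradients at nearly non-subdifferentiable points can be arbitrarily large); moreover the identification of $\sup\{\inner{v^\ast}{x-a}: v^\ast\in\partial f(z_i)\}$ with the directional derivative requires continuity-type hypotheses you do not have. These are precisely the obstacles (ii)--(iii) you list but do not resolve: what you are attempting is to re-prove an integration theorem for the convex subdifferential. The paper bypasses all of this by quoting Thibault's integration lemma (Lemma~\ref{inclusion subdiferencial}) together with Lemma~\ref{lemma:attouch_simetric}: from $(b)$ one gets $\partial f\subseteq \Li_{\norm{\cdot}\times w^\ast}\partial f_n\subseteq \partial^{\text{Fen}}(\eli f_n)\cap\partial(\els f_n)$, hence $f=\eli f_n+C_1=\els f_n+C_2$, and the anchor condition forces $C_1=C_2=0$. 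Unless you supply a complete proof of such an integration result (e.g.\ via $\varepsilon$-subdifferentials), your $(b)\Rightarrow(c)$ step does not go through as written.
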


  Observe that \cite[Example 3.2]{Zagrodny2005WeakStarConvergenceSubdifferentials} also serves as a counterexample to Theorem \ref{Theo:equival}, showing that $(c)$ does not imply $(b)$ once the WCG hypothesis is dropped.

  In the case of a locally equibounded sequence of functions on a normed space, $\Gamma$-convergence coincides with pointwise convergence; hence, we deduce the following corollary.

  \begin{corollary}\label{Cor:pontwiseconvergence}
      Let $X$ be a WCG space, and let $f, f_n\colon X\colon \to \mathbb{R} \cup \{\infty\}$, $n \in \mathbb{N}$, be proper lower semicontinuous convex functions. In addition, suppose that $(f_n)_{n\in\N}$ is locally equibounded, i.e, for every $x\in X$, there exists $r >0$ and $M \in\R$ such that 
      \begin{equation*}
          \forall n\in\N, \forall y\in \mathbb{B}_{r}(x),\; |f_n(y)|\leq M.
      \end{equation*}
Then, the following statements are equivalent:
\begin{enumerate}[label=(\alph*)]
    \item $\dual{f} = \welim \dual{f}_n$.
    \item $\partial f_n\xrightarrow{G(\|\cdot\| \times \dual{w})} \partial f$ 
    and there exists $(a,a^*) \in \partial f$ and a sequence $(a_n,a_n^*) \in \partial f_n $ such that
    $a_n \to a,\; 
  a_n^* \overset{\ast}{\rightharpoonup} a^* \text{ and }  
    f_n(a_n) \to f(a).$
    \item $f = \elim f_n$.
     \item For every $x\in X$, $\displaystyle\lim_{n\to\infty}f_n(x)=f(x)$.
\end{enumerate}
  \end{corollary}

\begin{proof}
    Observe that since $(f_n)_{n\in\N}$ is locally equibounded, condition \eqref{equicoercividad} is satisfied (see Remark \ref{Remark_equicoerc}). By \cite[Proposition~5.12]{dalmaso1993introduction},  $f = \elim f_n$ if and only if $(f_n)_{n\in\N}$ converges pointwise to $f$, so it is enough to apply Theorem \ref{Theo:equival}.
\end{proof}

In the setting of a reflexive Banach space $X$, the roles of $X$ and $X^\ast$ can be interchanged in the above result.
Hence, the equicoercivity assumption \eqref{equicoercividad} can be formulated in the primal space.
Moreover, since in this case the $w^\ast$-topology on $X^\ast$ coincides with the weak topology, we obtain the following corollaries as a direct consequence of Theorem~\ref{Theo:equival} and Corollary \ref{Cor:pontwiseconvergence}, respectively.

\begin{corollary}\label{dualidad_reflex}
	  Let $X$ be a reflexive Banach space, and let $f, f_n\colon X \to \mathbb{R} \cup \{\infty\}$, $n \in \mathbb{N}$, be proper lower semicontinuous convex functions. 
	  Consider the following statements:
	  \begin{enumerate}[label=(\alph*)]
	  	\item $f = \wwelim f_n $.
	  	\item $ \partial f_n\xrightarrow{G(w\times \| \cdot\|_{\ast})} \partial f $ and there exist  $(a, a^*) \in \partial f$  and a sequence  $(a_n, a_n^*) \in \partial f_n$ such that 
	   $a_n \rightharpoonup a$, $ \dual{a}_n \to \dual{a} $  and  $f_n(a_n) \to f(a)$.
       \item $	\dual{f}= \elim \dual{f}_n.$
	  \end{enumerate}

      Then the implication $(a)\Longrightarrow(b)\Longrightarrow(c)$ holds. Moreover, if we assume that the sequence $(f_n)_{n\in\N}$ satisfies 
	  \begin{align}\label{equicoercividad_primal}
	    (x_n)_{n\in\N} \subseteq X, \quad \sup_{n \in \mathbb{N}} f_n(x_n) < \infty \implies \sup_{n \in \mathbb{N}} \|x_n\| < \infty.
	  \end{align}
      Then the equivalence $(a)\Longleftrightarrow(b)\Longleftrightarrow(c)$ holds.
\end{corollary}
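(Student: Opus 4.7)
The plan is to apply Theorem \ref{Theo:equival} to the sequence $(\dual{f}_n)_{n\in\N}$ viewed as proper lower semicontinuous convex functions on $\dual{X}$, which is again a reflexive Banach space (hence WCG, its closed unit ball being weakly compact by reflexivity). The guiding observation is that, under reflexivity, $X^{\ast\ast}=X$ and the $w^\ast$-topology of $X^{\ast\ast}$ coincides with the weak topology of $X$; consequently, each $w^\ast$-convergence statement produced by the theorem corresponds, after this substitution, to a weak-convergence statement in $X$, while the norm topology on the predual becomes the norm topology on $\dual{X}$.

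The concrete translation rests on two further identifications: by Fenchel--Moreau, $\dual{(\dual{f})}=f$ and $\dual{(\dual{f}_n)}=f_n$; and in a reflexive space $\partial \dual{f}=(\partial f)^{-1}$ as subsets of $\dual{X}\times X$. With these in hand, statement (a) of Theorem \ref{Theo:equival} applied to $(\dual{f}_n)$ reads $f=\wwelim f_n$, matching (a) of the corollary; statement (c) reads $\dual{f}=\elim \dual{f}_n$, matching (c); and the graphical convergence of $\partial \dual{f}_n$ in the $\|\cdot\|_\ast\times w$ topology of $\dual{X}\times X$ becomes, upon swapping coordinates through $\partial \dual{f}=(\partial f)^{-1}$, the graphical convergence of $\partial f_n$ in the $w\times\|\cdot\|_\ast$ topology of $X\times\dual{X}$, as required in (b).

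The only genuinely delicate point concerns the value-convergence clause in (b). Applying Theorem \ref{Theo:equival} provides a pair and a sequence in the graphs of $\partial \dual{f}$ and $\partial \dual{f}_n$; after swapping coordinates, this yields $(a,\dual{a})\in\partial f$ and $(a_n,\dual{a}_n)\in\partial f_n$ with $\dual{a}_n\to \dual{a}$ in norm, $a_n\rightharpoonup a$ weakly, and $\dual{f}_n(\dual{a}_n)\to \dual{f}(\dual{a})$. From the Fenchel equalities $f_n(a_n)+\dual{f}_n(\dual{a}_n)=\inner{\dual{a}_n}{a_n}$ and $f(a)+\dual{f}(\dual{a})=\inner{\dual{a}}{a}$, together with the continuity of the duality pairing along a strong--weak convergent sequence, one obtains $\inner{\dual{a}_n}{a_n}\to\inner{\dual{a}}{a}$, and hence $f_n(a_n)\to f(a)$; the reverse conversion is identical. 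Finally, the equicoercivity hypothesis of Theorem \ref{Theo:equival} applied to the conjugates $\dual{(\dual{f}_n)}=f_n$ is precisely \eqref{equicoercividad_primal}, which delivers the last equivalence. I do not anticipate any genuinely new technical obstacle: the argument is essentially a disciplined transcription of Theorem \ref{Theo:equival} through the reflexive duality, with the only non-mechanical ingredient being the Fenchel-based equivalence between $\dual{f}_n(\dual{a}_n)\to\dual{f}(\dual{a})$ and $f_n(a_n)\to f(a)$ under strong--weak convergence of the subgradient pair.
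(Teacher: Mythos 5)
Your proposal is correct and follows essentially the same route as the paper, which simply records this corollary as a direct consequence of Theorem~\ref{Theo:equival} with the roles of $X$ and $X^\ast$ interchanged, using reflexivity to identify the $w^\ast$-topology of $X^{\ast\ast}$ with the weak topology of $X$. Your explicit handling of the normalization clause via the Fenchel equalities $f_n(a_n)+\dual{f}_n(\dual{a}_n)=\inner{\dual{a}_n}{a_n}$ and the strong--weak continuity of the pairing is exactly the detail the paper leaves implicit, and it is carried out correctly.
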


\begin{corollary}\label{Cor:pointwiseconvergence_reflex}
     Let $X$ be a reflexive space, and let $f, f_n\colon X\colon \to \mathbb{R} \cup \{\infty\}$, $n \in \mathbb{N}$, be proper lower semicontinuous convex functions. In addition, suppose that $(\dual{f}_n)_{n\in\N}$ is locally equibounded, i.e, for every $\dual{x}\in \dual{X}$, there exists $r >0$ and $M \in\R$ such that 
      \begin{equation*}
          \forall n\in\N, \forall \dual{y}\in \dual{\mathbb{B}}_{r}(\dual{x}),\; |\dual{f}_n(\dual{y})|\leq M,
      \end{equation*}
where $\dual{\mathbb{B}}_{r}(\dual{x})$ denotes the open ball of center $\dual{x}$ and radius $r$ with the norm $\norm{\cdot}_{\ast}$. Then, the following statements are equivalent:
\begin{enumerate}[label=(\alph*)]
    \item $f = \wwelim f_n$.
    \item$ \partial f_n\xrightarrow{G(w\times \| \cdot\|_{\ast})} \partial f $ and there exist  $(a, a^*) \in \partial f$  and a sequence  $(a_n, a_n^*) \in \partial f_n$ such that 
	   $a_n \rightharpoonup a$, $ \dual{a}_n \to \dual{a} $  and  $f_n(a_n) \to f(a)$.
    \item $\dual{f} = \elim \dual{f}_n$.
     \item For every $\dual{x}\in \dual{X}$, $\displaystyle\lim_{n\to\infty}\dual{f}_n(\dual{x})=\dual{f}(\dual{x})$.
\end{enumerate}
    
\end{corollary}

{ \color{red}
We also get the following corollary, which relates the $G(\norm{\cdot}\times\dual{w})$, the $ G(w\times\norm{\cdot}_\ast)$ and the $G(\norm{\cdot}\times\norm{\cdot}_\ast)$ convergence of subdifferentials in the reflexive case.
\begin{corollary}\label{corolario}
	Let $X$ be a reflexive Banach space, and let $f, f_n\colon X \to \mathbb{R} \cup \{\infty\}$, $n \in \mathbb{N}$, be proper lower semicontinuous convex functions. Consider the following statements:
    \begin{enumerate}[label=(\alph*)]
        \item $\partial f_n \xrightarrow{G(\|\cdot\| \times \dual{w})} \partial f$ and there exist $(a,a^*) \in \partial f$ and $(a_n,a_n^*) \in \partial f_n$ such that $a_n \to a$, 
  $a_n^* \overset{\ast}{\rightharpoonup} a^*$ and  
    $f_n(a_n) \to f(a)$.
    \item $ \partial f_n\xrightarrow{G(w\times \| \cdot\|_{\ast})} \partial f $ and there exist  $(a, a^*) \in \partial f$  and a sequence  $(a_n, a_n^*) \in \partial f_n$ such that 
	   $a_n \rightharpoonup a$, $ \dual{a}_n \to \dual{a} $  and  $f_n(a_n) \to f(a)$.
       \item $\partial f_n \xrightarrow{ G(\norm{\cdot} \times\norm{\cdot}_\ast)} \partial f$ and   there exist  $(a, a^*) \in \partial f$  and a sequence  $(a_n, a_n^*) \in \partial f_n$ such that 
	   $a_n \to a$, $ \dual{a}_n \to \dual{a} $  and  $f_n(a_n) \to f(a)$.
       \item $	\dual{f}= \elim \dual{f}_n$ and $f = \elim f_n$.
        
    \end{enumerate}
If the sequence $(f_n)_{n\in\N}$ satisfies \eqref{equicoercividad_primal} or if the sequence $(\dual{f}_n)_{n\in\N}$ satisfies \eqref{equicoercividad}, then $$((a) \wedge(b))\Longleftrightarrow (c)\Longleftrightarrow (d).$$  
\end{corollary}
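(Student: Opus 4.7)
The implication $(c) \Rightarrow (a) \wedge (b)$ is immediate, since $G(\|\cdot\| \times \|\cdot\|_*)$-convergence refines both $G(\|\cdot\| \times w^*)$- and $G(w \times \|\cdot\|_*)$-convergence, and the norm-convergent triple witnessing $(c)$ also witnesses the sequence clauses of $(a)$ and $(b)$.

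For the converse $(a) \wedge (b) \Rightarrow (c)$, my plan is to first reduce $(a)$ and $(b)$ to strong epi-convergence statements on the primal and dual side, and then merge the two resulting recovery sequences via the Br{\o}ndsted--Rockafellar theorem. Since every reflexive space is WCG, the (always valid) implication $(b) \Rightarrow (c)$ of Theorem~\ref{Theo:equival} applied to our hypothesis $(a)$ yields $f = \elim f_n$. Analogously, the implication $(b) \Rightarrow (c)$ of Corollary~\ref{dualidad_reflex} applied to $(b)$ gives $f^* = \elim f_n^*$. Thus both the primal and the dual sequence strongly epi-converge.

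The inclusion $\Ls_{\|\cdot\| \times \|\cdot\|_*} \partial f_n \subseteq \partial f$ is clear from $(a)$, since norm convergence in $X^*$ implies $w^*$-convergence. For the reverse inclusion $\partial f \subseteq \Li_{\|\cdot\| \times \|\cdot\|_*} \partial f_n$, I fix $(x, x^*) \in \partial f$ and choose recovery sequences $\tilde x_n \to x$ in $X$ and $\tilde x_n^* \to x^*$ in $X^*$ with $f_n(\tilde x_n) \to f(x)$ and $f_n^*(\tilde x_n^*) \to f^*(x^*)$. The Fenchel--Young gap
\[
\epsilon_n := f_n(\tilde x_n) + f_n^*(\tilde x_n^*) - \langle \tilde x_n^*, \tilde x_n \rangle \geq 0
\]
tends to $0$, because $\langle \tilde x_n^*, \tilde x_n \rangle \to \langle x^*, x \rangle = f(x) + f^*(x^*)$. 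The Br{\o}ndsted--Rockafellar theorem, applied with parameter $\lambda_n = \sqrt{\epsilon_n}$ (or $\lambda_n = 1/n$ when $\epsilon_n = 0$), then produces $(\hat x_n, \hat x_n^*) \in \partial f_n$ with $\|\hat x_n - \tilde x_n\| \leq \sqrt{\epsilon_n}$ and $\|\hat x_n^* - \tilde x_n^*\|_* \leq \sqrt{\epsilon_n}$; hence $\hat x_n \to x$ and $\hat x_n^* \to x^*$ in norm, which is the required graphical convergence.

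Finally, to supply the sequence clause of $(c)$, I fix any $(a, a^*) \in \partial f$ and apply the same construction to obtain $(\hat a_n, \hat a_n^*) \in \partial f_n$ with $\hat a_n \to a$ and $\hat a_n^* \to a^*$ in norm. The Fenchel--Young equality yields $f_n(\hat a_n) + f_n^*(\hat a_n^*) = \langle \hat a_n^*, \hat a_n \rangle \to \langle a^*, a \rangle = f(a) + f^*(a^*)$, and the $\liminf$ bounds $\liminf_n f_n(\hat a_n) \geq f(a)$ and $\liminf_n f_n^*(\hat a_n^*) \geq f^*(a^*)$, coming from $f = \elim f_n$ and $f^* = \elim f_n^*$, respectively, combine to force both $f_n(\hat a_n) \to f(a)$ and $f_n^*(\hat a_n^*) \to f^*(a^*)$. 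The main obstacle is precisely this merging step: passing from two independently constructed recovery sequences---one strong in $X$, the other strong in $X^*$, but not a priori related through the subdifferential---to a single sequence lying on the graph of each $\partial f_n$, which is exactly what the Br{\o}ndsted--Rockafellar theorem achieves.
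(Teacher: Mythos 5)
Your forward direction $(a)\wedge(b)\Rightarrow(c)$ is correct, and it takes a genuinely different route from the paper: the paper deduces Mosco convergence (using Theorem~\ref{Theo:equival} for the strong $\Gamma$-limit and the equivalence in Corollary~\ref{dualidad_reflex}, which needs \eqref{equicoercividad_primal}, for the weak $\Gamma$-liminf) and then invokes Attouch's theorem, whereas you work with the two strong epi-limits $f=\elim f_n$ and $\dual{f}=\elim \dual{f}_n$ and merge the recovery sequences by Br{\o}ndsted--Rockafellar, in effect reproving the relevant half of Attouch's theorem. This is more self-contained, and it is worth noting that your argument only uses the implications $(b)\Rightarrow(c)$ of Theorem~\ref{Theo:equival} and of Corollary~\ref{dualidad_reflex}, so it does not actually use \eqref{equicoercividad_primal} in this direction.

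The gap is in the reverse direction $(c)\Rightarrow(a)\wedge(b)$, which you dismiss as ``immediate by refinement''. The refinement goes the wrong way for the outer limits: coarsening the topology on one factor \emph{enlarges} the sequential upper limit, i.e. $\Ls_{\norm{\cdot}\times\norm{\cdot}_\ast}\partial f_n\subseteq \Ls_{\norm{\cdot}\times \dual{w}}\partial f_n$ and $\Ls_{\norm{\cdot}\times\norm{\cdot}_\ast}\partial f_n\subseteq \Ls_{w\times\norm{\cdot}_\ast}\partial f_n$, so the inclusions $\Ls_{\norm{\cdot}\times \dual{w}}\partial f_n\subseteq\partial f$ and $\Ls_{w\times\norm{\cdot}_\ast}\partial f_n\subseteq\partial f$ required by $(a)$ and $(b)$ are \emph{stronger} than the one given by $(c)$, and for general set-valued maps they do not follow from norm--norm graph convergence. (Only the $\Li$ inclusions and the normalization clauses of $(a)$ and $(b)$ are immediate from $(c)$.) The statement is nevertheless true for subdifferentials, but this needs an argument: given a subsequence $(x_{n_k},x_{n_k}^\ast)\in\partial f_{n_k}$ with, say, $x_{n_k}\to x$ and $x_{n_k}^\ast\overset{\ast}{\rightharpoonup}x^\ast$, take for each $(y,y^\ast)\in\partial f$ a norm--norm approximating sequence $(y_n,y_n^\ast)\in\partial f_n$ provided by $(c)$, pass to the limit in the monotonicity inequality $\inner{x_{n_k}^\ast-y_{n_k}^\ast}{x_{n_k}-y_{n_k}}\ge 0$ (the pairing converges because one factor converges in norm), and conclude $(x,x^\ast)\in\partial f$ by maximal monotonicity of $\partial f$; the same scheme handles the $w\times\norm{\cdot}_\ast$ case. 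This is exactly the content of \cite[Proposition 3.59]{Libro_Attouch}, which the paper cites for this implication; your proof needs either that citation or the monotonicity argument spelled out.
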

\begin{proof}
    That $(c)$ implies $(a)$ and $(b)$ follows directly from \cite[Proposition 3.59]{Libro_Attouch}. That $(c)$ implies $(d)$ follows from the Attouch Theorem and the fact that Mosco convergence makes the Fenchel conjugate bicontinuous in reflexive spaces. That $(a)$ and $(b)$ implies $(d)$ follows from Theorem \ref{Theo:equival} and Corollary \ref{dualidad_reflex}.
	Now, assume that the sequence $(f_n)_{n\in\N}$ satisfies \eqref{equicoercividad_primal}. Suppose that $(a)$ and $(b)$ hold. Using Theorem \ref{Theo:equival} and Corollary \ref{dualidad_reflex}, we deduce that
	$f_n$ Mosco-converges to $f$. Hence, using the Attouch theorem, we conclude $(c)$. To see that $(d)$ implies $(c)$ (asumming that $(f_n)_{n\in\N}$ satisfies \eqref{equicoercividad_primal}), we use Corollary \ref{dualidad_reflex} to deduce that $f= \wwelim f_n$, and then $f_n$ Mosco converges to $f$, so we conclude $(c)$ by the Attouch theorem.

    In order to see that $(d)$ implies $(c)$ when $(\dual{f}_n)_{n\in\N}$ satisfies \ref{equicoercividad}, we use Theorem \ref{Theo:equival} to deduce that $\dual{f} = \welim \dual{f}_n$, and therefore, $\dual{f}_n$ Mosco converges to $\dual{f}$, hence, using that the Fenchel conjugate is bicontinuous with respect to Mosco Convergence, we obtain that $f_n$ Mosco converges to $f$. Thence, we conclude $(c)$ using the Attouch theorem.
    
\end{proof}
}

\section{Application to convex integral functionals}\label{application}

In this section we present an application of Corollary \ref{dualidad} to convex
integral functionals. Functionals of this type arise naturally in the calculus
of variations, optimal control, and stochastic optimization, and were
systematically studied by Rockafellar in \cite{Rockafellar1968IntegralsConvexFunctionals}. Let $(T, \mathcal{F},\mu)$ be a $\sigma$-finite measure space. Let $1\leq p <\infty$ and consider $L$ to be the space $L_{n}^{p}(T)$ consisting of all $\R^n$-valued measurable functions $u$ on $T$ such that

\begin{equation*}
    \int_{T}|u(t)|^p d\mu(t).
\end{equation*}

With $|\cdot|$ denoting the euclidean norm in $\R^n$. We will consider functionals of the form
\begin{equation*}
    I_f(u):= \int_{T}f(t,u(t))d\mu(t) \text{ for }u\in L,
\end{equation*}

where $f$ is a function from $T\times\R^n$ to $\R\cup\{\infty\}$ such that $f$ is convex integrand, i.e, $f(t,x)$ is a convex function of $x\in\R^n$ for each $t\in T$, hence $I_f$ is convex. Moreover, we will consider $f$ to be a \emph{normal integrand}, i.e, $f(t,\cdot)$ is proper and lower semicontinuous for every $t\in T$ and if further there exists a countable collection \(U\) of measurable
functions \(u\) from \(T\) to \(\mathbb{R}^n\) having the following properties:

\begin{enumerate}
\item for each \(u \in U\), \(f(t,u(t))\) is measurable in \(t\);

\item for each \(t\), \(U_t \cap \operatorname{dom} f_t\) is dense in \(\operatorname{dom} f_t\), where
\[
U_t = \{u(t): u \in U\}.
\]
\end{enumerate}
Although these last assumptions might seem complicated, there are some remarkable examples of normal integrands mentioned in \cite{Rockafellar1968IntegralsConvexFunctionals} and in \cite[Chapter~14]{Rockafellar_Wets}. In fact, the natural character of this class was shown in \cite{Rockafellar1969MeasurableDependence}.

\begin{example}
     Suppose \(f(t,x)=F(x)\) for all \(t\in T\), where \(F\) is a lower
semicontinuous proper convex function on \(\mathbb{R}^n\).
Then \(f\) is a normal convex integrand.
\end{example}

\begin{example}
    Suppose \(f\) is a convex integrand such that \(f(t,x)\)
is measurable in \(t\) for each fixed \(x\in\R^n\), and such that, for every \(t\in T\),
\(f(t,x)\) is lower semi-continuous in \(x\) and has interior points in its
effective domain \(\{x : f(t,x) < +\infty\}\).
Then \(f\) is a normal convex integrand. In particular, if $f$ is a convex integrand having only finite values such that $f(\cdot,x)$ is measurable for each $x$, the $f$ is a normal convex integrand.
\end{example}

\begin{example}
    Falling in the category of normal integrands are all \emph{Carathéo-\\dory integrands},
i.e., the functions \( f : T \times \mathbb{R}^n \to \mathbb{R} \) (finite-valued)
such that \( f(t,x) \) is measurable in \( t \) for each \( x \) and continuous in
\( x \) for each \( t \). Indeed, a function \( f : T \times \mathbb{R} \to \overline{\mathbb{R}} \)
is a Carathéodory integrand if and only if both \( f \) and \( -f \) are proper
normal integrands.
\end{example}

\begin{example}\label{ex:joint}
    If \( f : T \times \mathbb{R}^n \to \overline{\mathbb{R}} \) is a normal integrand,
then \( f(t,x) \) is \( \mathcal{A} \otimes \mathcal{B}(\mathbb{R}^n) \)-measurable
as a function of \( (t,x) \), in addition to being lsc as a function of \( x \).
Conversely, these properties ensure that \( f \) is a normal integrand when
\( (T,\mathcal{F},\mu) \) is complete. In
particular, when \( T = \mathbb{R}^d \) and \( \mathcal{F} = \mathcal{L}(\mathbb{R}^d) \) with $\mu$ the Lebesgue measure.
\end{example}

We recall from \cite[Lemma~5]{Rockafellar1968IntegralsConvexFunctionals} that if $f$ is a normal convex integrand then $\dual{f}$ is also a normal convex integrand and $f(t,u(t))$ is a measurable function of $t$ for every measurable function $u$ from $T$ to $\R^n$.

Lastly, we recall the following theorem and corollary from \cite[Theorem~2]{Rockafellar1968IntegralsConvexFunctionals}.

\begin{theorem}\label{Theo:dualidad_integrales}
    Let \(f\) be a normal convex integrand such that \(f(t,u(t))\) is
$\mu$-integrable for at least one \(u \in L\), and
\(f^\ast(t,u^\ast(t))\) is $\mu$-integrable for at least one
\(u^\ast \in L^\ast\).
Then \(I_f\) on \(L\) and \(I_{f^\ast}\) on \(L^\ast\) are proper convex
functions conjugate to each other.
\end{theorem}

\begin{corollary}
Suppose that \(T\) is of finite measure. Let \(f\) be of the form \(f(t,x)=F(x)\), where
\(F\) is a lower semicontinuous proper convex function on \(\mathbb{R}^n\).
Then \(I_f\) on \(L\) and \(I_{f^\ast}\) on \(L^\ast\) are conjugate to each other.
\end{corollary}

Since $\mu$ is $\sigma$-finite, the space $L^p_n(T)$ is WCG for every
$p\in[1,\infty)$, regardless of separability (which can not be ensured), and this includes the
non-reflexive case $p=1$. Consequently, Corollary \ref{dualidad}
applies to the convex integral functionals considered in this section.

\begin{corollary}
    Let $f,f_n\colon T\times\R^n \to \R\cup\{\infty\}$, $n\in\N$, be normal convex integrands. Assume that there are $u, u_n \in L$, $n\in\N$, and $\dual{u},\dual{u}_n\in\dual{L}, n\in\N$, such that \(f(t,u(t)), f_n(t,u_n(t)), \dual{f}(t,\dual{u}(t)), \dual{f}_n(t,\dual{u}_n(t))\) are
$\mu$-integrable. Suppose that $$\forall(\dual{v}_n)_{n\in\N}\subseteq \dual{L}, \sup_{n\in\N}I_{\dual{f}_n}(\dual{v}_n)<\infty \implies \sup_{n\in\N}\norm{\dual{v}_n}_{\ast}<\infty.$$

Then, $I_f = \elim I_{f_n}$ if and only if 
		$I_{\dual{f}} = \welim I_{\dual{f}_n}.$

\end{corollary}

\begin{corollary}
    Let $F,F_n\colon \R^n \to \R\cup\{\infty\}$, $n\in\N$, be proper lower semicontinuous convex functions. Assume that $T$ is of finite measure. In addition, suppose that $$\forall(\dual{v}_n)_{n\in\N}\subseteq \dual{L}, \sup_{n\in\N}I_{\dual{F}_n}(\dual{v}_n)<\infty \implies \sup_{n\in\N}\norm{\dual{v}_n}_{\ast}<\infty.$$

Then, $I_F = \elim I_{F_n}$ if and only if 
		$I_{\dual{F}} = \welim I_{\dual{F}_n}.$
\end{corollary}

When the sequence of functionals is locally equibounded, we may apply Corollary \ref{Cor:pontwiseconvergence}.
\begin{corollary}
    Let $f,f_n\colon T\times\R^n \to \R\cup\{\infty\}$, $n\in\N$, be normal convex integrands. Assume that there are $u, u_n \in L$, $n\in\N$, and $\dual{u},\dual{u}_n\in\dual{L}, n\in\N$, such that \(f(t,u(t)), f_n(t,u_n(t)), \dual{f}(t,\dual{u}(t)), \dual{f}_n(t,\dual{u}_n(t))\) are
$\mu$-integrable. Suppose that the sequence of functionals $(I_{f_n})_{n\in\N}$ is locally equibounded, i.e, for every $u\in L$, there exists $r >0$ and $M \in\R$ such that 
      \begin{equation*}
          \forall n\in\N, \forall v\in \mathbb{B}_{r}(u),\; |I_{f_n}(v)|\leq M.
      \end{equation*}

Then, the following are equivalent
\begin{enumerate}[label=(\alph*)]
\item$I_f = \elim I_{f_n}$.
\item $I_{\dual{f}} = \welim I_{\dual{f}_n}$. 
\item $I_f$ is the pointwise limit of $I_{f_n}$.
\end{enumerate}
\end{corollary}

\begin{corollary}
    Let $F,F_n\colon \R^n \to \R\cup\{\infty\}$, $n\in\N$, be proper lower semicontinuous convex functions. Assume that $T$ is of finite measure,  In addition, suppose that the sequence of functionals $(I_{f_n})_{n\in\N}$ is locally equibounded, i.e, for every $u\in L$, there exists $r >0$ and $M \in\R$ such that 
      \begin{equation*}
          \forall n\in\N, \forall v\in \mathbb{B}_{r}(u),\; |I_{f_n}(v)|\leq M.
      \end{equation*}

Then, the following are equivalent
\begin{enumerate}[label=(\alph*)]
\item$I_f = \elim I_{f_n}$.
\item $I_{\dual{f}} = \welim I_{\dual{f}_n}$. 
\item $I_f$ is the pointwise limit of $I_{f_n}$.
\end{enumerate}
\end{corollary}

In what remains of this section, we will assume that $p \in (1,\infty)$, hence, L is reflexive, therefore, besides the two previous corollaries, we can apply Corollary \ref{dualidad_reflex} to deduce the following corollaries.

\begin{corollary}
    Let $f,f_n\colon T\times\R^n \to \R\cup\{\infty\}$, $n\in\N$, be normal convex integrands. Assume that there are $u, u_n \in L$, $n\in\N$, and $\dual{u},\dual{u}_n\in\dual{L}, n\in\N$, such that \(f(t,u(t)), f_n(t,u_n(t)), \dual{f}(t,\dual{u}(t)), \dual{f}_n(t,\dual{u}_n(t))\) are
$\mu$-integrable. Suppose that $$\forall(v_n)_{n\in\N}\subseteq L, \sup_{n\in\N}I_{f_n}(v_n)<\infty \implies \sup_{n\in\N}\norm{v_n}<\infty.$$

Then, $I_f = \wwelim I_{f_n}$ if and only if 
		$I_{\dual{f}} = \elim I_{\dual{f}_n}.$

\end{corollary}

\begin{corollary}
    Let $F,F_n\colon \R^n \to \R\cup\{\infty\}$, $n\in\N$, be proper lower semicontinuous convex functions. Assume that $T$ is of finite measure. In addition, suppose that $$\forall(v_n)_{n\in\N}\subseteq L, \sup_{n\in\N}I_{F_n}(v_n)<\infty \implies \sup_{n\in\N}\norm{v_n}<\infty.$$

Then, $I_F = \wwelim I_{F_n}$ if and only if 
		$I_{\dual{F}} = \elim I_{\dual{F}_n}.$
\end{corollary}

Lastly, in the case where the conjugates are locally equibounded, we can apply Corollary \ref{Cor:pointwiseconvergence_reflex} to obtain the following corollaries.

\begin{corollary}
    Let $f,f_n\colon T\times\R^n \to \R\cup\{\infty\}$, $n\in\N$, be normal convex integrands. Assume that there are $u, u_n \in L$, $n\in\N$, and $\dual{u},\dual{u}_n\in\dual{L}, n\in\N$, such that \(f(t,u(t)), f_n(t,u_n(t)), \dual{f}(t,\dual{u}(t)), \dual{f}_n(t,\dual{u}_n(t))\) are
$\mu$-integrable. Suppose that the sequence of functionals $(I_{\dual{f}_n})_{n\in\N}$ is locally equibounded, i.e, for every $\dual{u}\in L^\ast$, there exists $r >0$ and $M \in\R$ such that 
      \begin{equation*}
          \forall n\in\N, \forall \dual{v}\in \dual{\mathbb{B}}_{r}(\dual{u}),\; |I_{\dual{f}_n}(\dual{v})|\leq M,
      \end{equation*}
where $\dual{\mathbb{B}}_{r}(\dual{u})$ is the open ball of center $\dual{u}$ and radius $r$ with the dual norm in $\dual{L}$.
Then, the following are equivalent
\begin{enumerate}[label=(\alph*)]
\item$I_{\dual{f}} = \elim I_{\dual{f}_n}$.
\item $I_{f} = \wwelim I_{f_n}$. 
\item $I_{\dual{f}}$ is the pointwise limit of $I_{\dual{f}_n}$.
\end{enumerate}
\end{corollary}

\begin{corollary}
    Let $F,F_n\colon \R^n \to \R\cup\{\infty\}$, $n\in\N$, be proper lower semicontinuous convex functions. Assume that $T$ is of finite measure. In addition,  Suppose that the sequence of functionals $(I_{\dual{F}_n})_{n\in\N}$ is locally equibounded, i.e, for every $\dual{u}\in L^\ast$, there exists $r >0$ and $M \in\R$ such that 
      \begin{equation*}
          \forall n\in\N, \forall \dual{v}\in \dual{\mathbb{B}}_{r}(\dual{u}),\; |I_{\dual{F}_n}(\dual{v})|\leq M,
      \end{equation*}
where $\dual{\mathbb{B}}_{r}(\dual{u})$ is the open ball of center $\dual{u}$ and radius $r$ with the dual norm in $\dual{L}$.
Then, the following are equivalent
\begin{enumerate}[label=(\alph*)]
\item$I_{\dual{F}} = \elim I_{\dual{F}_n}$.
\item $I_{F} = \wwelim I_{F_n}$. 
\item $I_{\dual{F}}$ is the pointwise limit of $I_{\dual{F}_n}$.
\end{enumerate}
\end{corollary}

These results provide a dual characterization of the $\Gamma$-convergence
of convex integral functionals on $L^p$ spaces in terms of the $\Gamma$-convergence of their conjugates with the $\dual{w}$-topology.

\begin{remark}
It is known from \cite{Diestel1975L1WCG} that the space of Bochner-integrable functions defined in a finite measure space with values in a Banach space Y is WCG whenever Y is WCG (one may extend this to a $\sigma$-finite measure). We also recall that the Bochner space $L^p(\mu,Y)$ for $p\in (1,\infty)$ is reflexive whenever $Y$ is reflexive (thus WCG). Therefore, one may try to apply the duality of $\Gamma$-convergence studied in Section \ref{MainResults} to the more general setting of Bochner spaces. The conjugate of integral functionals given by measurable integrands in the product $T\times Y$ has been studied by Rockafellar in the case where $Y$ is separable (hence WCG) in \cite[Theorem~21]{Rockafellar1974ConjugateDuality}, although unlike Theorem \ref{Theo:dualidad_integrales}, lower semicontinuity is not warranted by duality, so that it might have to be checked separately. Nevertheless, in the case where $Y$ is reflexive and separable, a result analogous to Theorem \ref{Theo:dualidad_integrales} is presented for convex normal integrands (where in this case, normal integrands are defined as in Example \ref{ex:joint}) in \cite[Theorem~VII-7]{CastaingValadier1977}. 
\end{remark}

\section{Some Preliminary  Results}\label{Some_Pre_results}

In this section, we collect several results that will be used in the sequel.

The first lemma states a rule for interchanging upper limits. A proof can be found in \cite[Corollary 1.16]{Libro_Attouch}.

\begin{lemma}\label{Lemma:IntercambioLimsup}
	Let $\{\alpha_{k,n}: k\geq 1,\; n\geq 1\}\subseteq \R \cup\{-\infty,\infty\}$ be a double indexed sequence of extended real numbers. Then, there is $(n_k)_{k\in\N}$ with $n_k\to \infty$ as $k\to\infty$ such that
	\begin{equation*}
		\limsup_{k\to\infty}\alpha_{k,n_k}\leq \limsup_{n\to\infty}\limsup_{k\to\infty}\alpha_{k,n}.    
	\end{equation*}
\end{lemma}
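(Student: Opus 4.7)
Let $L := \limsup_{n\to\infty}\limsup_{k\to\infty}\alpha_{k,n}$ and, for each $n\in\mathbb{N}$, set $\beta_n := \limsup_{k\to\infty}\alpha_{k,n}$, so that $L = \limsup_{n\to\infty}\beta_n \in \mathbb{R}\cup\{-\infty,\infty\}$. The strategy is a standard diagonal extraction: I will build indices $N_m \uparrow \infty$ along which $\beta_{N_m}$ is close to $L$ from above, then (for each fixed $m$) thresholds $K_m \uparrow \infty$ beyond which $\alpha_{k,N_m}$ itself is close to $L$ from above, and finally glue them into a single sequence $n_k$ whose diagonal values $\alpha_{k,n_k}$ are forced to have limsup at most $L$.

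\textbf{Case 1: $L = +\infty$.} Any choice with $n_k\to\infty$, e.g.\ $n_k = k$, makes the inequality trivial.

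\textbf{Case 2: $L \in \mathbb{R}$.} Choose a sequence $\varepsilon_m\downarrow 0$ (say $\varepsilon_m = 1/m$). For each $m$, by definition of $\limsup$, there exists $N_m$ such that $\beta_n \le L + \varepsilon_m$ for all $n \ge N_m$; by passing to a subsequence I may assume $N_m$ is strictly increasing and in particular $N_m\to\infty$. Fix $m$. Since $\beta_{N_m} = \limsup_k \alpha_{k,N_m} \le L + \varepsilon_m$, there exists $K_m$ such that $\alpha_{k,N_m} \le L + 2\varepsilon_m$ for all $k\ge K_m$; again by passing to a subsequence I may assume $K_m$ is strictly increasing with $K_1 = 1$. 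Now define
\begin{equation*}
n_k := N_m \quad \text{whenever } K_m \le k < K_{m+1}.
\end{equation*}
Then $n_k\to\infty$ as $k\to\infty$ (since $N_m\to\infty$), and for each $k$ in the block $[K_m,K_{m+1})$ we have $\alpha_{k,n_k} = \alpha_{k,N_m} \le L + 2\varepsilon_m$. Taking $\limsup$ as $k\to\infty$ (equivalently $m\to\infty$) yields $\limsup_k \alpha_{k,n_k} \le L$.

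\textbf{Case 3: $L = -\infty$.} The same construction works with the role of $L + \varepsilon_m$ replaced by $-m$: for each $m$ pick $N_m$ so that $\beta_n \le -m$ for $n \ge N_m$, then $K_m$ so that $\alpha_{k,N_m} \le -m+1$ for $k \ge K_m$, with both sequences strictly increasing. Setting $n_k := N_m$ for $K_m\le k < K_{m+1}$ gives $\alpha_{k,n_k}\le -m+1$ on that block, hence $\limsup_k \alpha_{k,n_k} = -\infty = L$.

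\textbf{Main obstacle.} There is no deep difficulty; the only care required is bookkeeping so that $n_k$ genuinely tends to infinity (which is why $N_m$ must be forced to be strictly increasing, independently of the natural choice coming from the definition of $\limsup$) and that the blocks $[K_m,K_{m+1})$ cover all sufficiently large $k$. Handling the extended-real cases $L=\pm\infty$ uniformly just amounts to replacing the tolerance $L+\varepsilon_m$ by a sequence of upper bounds tending to $L$, as done above.
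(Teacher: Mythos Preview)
Your argument is correct; this is the standard diagonal-extraction proof of this elementary lemma. The paper does not supply its own proof but simply refers to \cite[Corollary~1.16]{Libro_Attouch}, whose proof is essentially the same construction you give. One tiny bookkeeping remark: forcing $K_1=1$ is harmless but not literally achieved ``by passing to a subsequence''; it is cleaner either to define $n_k$ arbitrarily for $k<K_1$ or to observe that only the tail of $(n_k)$ matters for the $\limsup$.
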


The second lemma provides sufficient conditions for the continuity of the Fenchel conjugate with respect to the norm topology in the dual space. This result follows directly from \cite[Corollary~4.4.11]{Libro_borwein}.

\begin{lemma}\label{cont_conj}
	Let $f\colon  X \to \R \cup \{\infty\}$ be a convex function. 
	If there exist $\alpha > 0$, $\beta \in \R$, and $\bar{x}^{\ast} \in X^{\ast}$ such that for all $x \in X$
	\begin{equation*}
		f(x) \geq \alpha \|x\|^2 + \langle \bar{x}^{\ast}, x \rangle + \beta,
	\end{equation*}
	then $\dual{f}$ is continuous with respect to the norm topology.
\end{lemma}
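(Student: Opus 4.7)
The plan is to convert the quadratic minorization of $f$ into a quadratic majorization of $\dual{f}$, from which norm-continuity of $\dual{f}$ will follow by the standard local-boundedness principle for convex functions.

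First, I would set $g(x) := \alpha \norm{x}^2 + \inner{\bar{x}^\ast}{x} + \beta$, so that the hypothesis reads $f \geq g$ pointwise on $X$. Since Fenchel conjugation reverses pointwise inequalities, $\dual{f} \leq \dual{g}$ on $\dual{X}$. A direct computation gives
\[
\dual{g}(\dual{x}) \;=\; \sup_{x \in X}\Bigl\{\inner{\dual{x} - \bar{x}^\ast}{x} - \alpha \norm{x}^2\Bigr\} - \beta \;=\; \frac{\dualnorm{\dual{x} - \bar{x}^\ast}^2}{4\alpha} - \beta,
\]
where the last equality is obtained by maximizing along rays $x = tu$ with $\norm{u}=1$: the one-dimensional problem $\sup_{t \in \R}\bigl\{t\,\inner{\dual{y}}{u} - \alpha t^2\bigr\} = (\inner{\dual{y}}{u})^2/(4\alpha)$, combined with the supremum over unit vectors $u$, yields $\dualnorm{\dual{y}}^2/(4\alpha)$ for $\dual{y} := \dual{x} - \bar{x}^\ast$.

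From the inequality $\dual{f} \leq \dual{g}$ I then deduce that $\dom \dual{f} = \dual{X}$ and that $\dual{f}$ is bounded above on every norm-bounded subset of $\dual{X}$; in particular, $\dual{f}$ is bounded above on some neighborhood of every point of $\dual{X}$. Together with the convexity of $\dual{f}$, the classical local-boundedness criterion for convex functions on a Banach space then forces norm-continuity of $\dual{f}$ at every point of $\dual{X}$, which is the desired conclusion. This short argument is essentially the content of \cite[Corollary~4.4.11]{Libro_borwein}, to which the statement defers. The only computational ingredient is the evaluation of $\dual{g}$, so no substantive obstacle is expected in carrying out the proof.
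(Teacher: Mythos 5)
Your argument is correct and is essentially the same as the paper's, which simply defers to \cite[Corollary~4.4.11]{Libro_borwein}: conjugating the quadratic minorant $g$ to get $\dual{f}\le \dual{g}(\cdot)=\tfrac{1}{4\alpha}\dualnorm{\cdot-\bar{x}^\ast}^2-\beta$ and invoking local boundedness above plus convexity is exactly the standard proof of that corollary. The only point worth a half-line is properness: if $f\not\equiv\infty$ then $\dual{f}>-\infty$ everywhere, so the local-boundedness criterion applies, while the degenerate case $f\equiv\infty$ gives $\dual{f}\equiv-\infty$, which is trivially continuous.
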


The next lemma concerns the approximation of a convex proper lower semicontinuous function by its Moreau envelope, together with its continuity properties. We refer to \cite[Theorem 3.265]{MR4659162} and \cite[Theorem 4.4.17]{Libro_borwein} for more details.

\begin{lemma}\label{ConvMoreau}
	Let $f\colon X \to \R \cup \{\infty\}$ be a proper lower semicontinuous convex function. Then, for every $x \in X$, one has 
	\[
	f_{\lambda}(x) \nearrow f(x) \quad \text{as } \lambda \to 0,
	\]
	and $f_\lambda$ is continuous for every $\lambda > 0$.
\end{lemma}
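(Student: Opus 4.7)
The plan is to establish the two assertions separately: the pointwise monotone convergence $f_\lambda(x)\nearrow f(x)$ as $\lambda\searrow 0$, and the continuity of $f_\lambda$ for each fixed $\lambda>0$. Both arguments rest on the standard fact that a proper lower semicontinuous convex function $f$ admits a continuous affine minorant, i.e., there exist $\bar x^\ast\in X^\ast$ and $c\in\mathbb{R}$ such that $f(y)\geq\langle\bar x^\ast,y\rangle+c$ for all $y\in X$.

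For the convergence, the easy inequality $f_\lambda(x)\leq f(x)$ follows by using $y=x$ in the defining infimum, while monotonicity in $\lambda$ is immediate from the pointwise comparison of the two quadratic kernels when $0<\lambda_1\leq\lambda_2$. Hence $\lim_{\lambda\to 0^+}f_\lambda(x)$ exists and does not exceed $f(x)$. The opposite inequality is the main technical point. Fixing $\epsilon>0$ and a near-minimizer $y_\lambda$ with $f(y_\lambda)+\tfrac{1}{2\lambda}\|x-y_\lambda\|^2\leq f_\lambda(x)+\epsilon$, I would substitute the affine minorant into the left-hand side to obtain a quadratic inequality in $u:=\|x-y_\lambda\|$ whose coefficients vanish with $\lambda$; this forces $y_\lambda\to x$. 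Lower semicontinuity of $f$ then yields $f(x)\leq\liminf_{\lambda\to 0}f(y_\lambda)\leq\lim_{\lambda\to 0}f_\lambda(x)+\epsilon$, and arbitrariness of $\epsilon$ closes the argument.

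For the continuity, I would recognize $f_\lambda=f\,\square\,\tfrac{1}{2\lambda}\|\cdot\|^2$ as an inf-convolution of two convex functions, hence convex. A completing-the-square argument with the affine minorant produces a finite lower bound of the form $f_\lambda(x)\geq\langle\bar x^\ast,x\rangle+c-\tfrac{\lambda}{2}\|\bar x^\ast\|_\ast^2$, so $f_\lambda>-\infty$ everywhere. The trivial upper bound $f_\lambda(x)\leq f(x_0)+\tfrac{1}{2\lambda}\|x-x_0\|^2$ for a fixed $x_0\in\dom f$ then shows that $f_\lambda$ is finite on $X$ and bounded above on every neighborhood by a continuous function. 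Since a convex function that is locally bounded above on an open subset of its domain is continuous there, and $\dom f_\lambda=X$, continuity follows on all of $X$. The only genuinely delicate step is the forced strong convergence $y_\lambda\to x$ needed to invoke the lower semicontinuity of $f$ in the first part; once that is in hand, everything else is a routine application of the elementary calculus of convex functions and inf-convolutions.
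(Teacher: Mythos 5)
Your proposal is correct. The paper does not prove this lemma at all—it delegates it to the cited references (e.g.\ \cite[Theorem 4.4.17]{Libro_borwein})—and your argument is precisely the standard one found there: affine minorant via Hahn--Banach, monotonicity and the bound $f_\lambda\leq f$, a completing-the-square estimate forcing near-minimizers $y_\lambda\to x$ so that lower semicontinuity yields the reverse inequality, and continuity of $f_\lambda$ from convexity, finiteness, and local boundedness above. The only point worth making explicit is that the quadratic estimate forcing $y_\lambda\to x$ requires $\lim_{\lambda\to 0}f_\lambda(x)<\infty$; this costs nothing, since when that limit is $+\infty$ the inequality $f_\lambda\leq f$ already gives $f(x)=+\infty$ and the convergence is trivial.
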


The following lemma establishes an approximation principle for conjugate functions via infimal convolutions. Its proof relies on the same ideas as the classical approximation of functions by infimal convolution, and is therefore omitted.

\begin{lemma}\label{aproximacion}
	Let $f\colon X \to \R \cup \{\infty\}$ be a proper lower semicontinuous convex function, and let $A \subseteq X$ be a weakly compact, convex, and symmetric set such that the space generated by $A$ is dense in $X$ with the norm topology. In addition, suppose that $f$ is continuous at $0$. Then, for every $\dual{x} \in X^{\ast}$,
	\[
	g_n(\dual{x}) := \dual{f} \,\square\, \sigma_{[0,n]A}(\dual{x}) \nearrow \dual{f}(\dual{x}) \quad \text{as } n \to \infty,
	\]
	and $g_n$ is continuous with respect to the Mackey topology for every $n\in\N$. 
\end{lemma}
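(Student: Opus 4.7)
The plan is to follow the pattern of the classical Moreau-envelope approximation, with the quadratic regularization replaced by the support function $\sigma_{nA}$ of the weakly compact, convex, symmetric set $nA$. I would split the work into four parts: (i) an explicit sup-formula for $g_n$; (ii) monotonicity and the upper bound $g_n \le f^{*}$; (iii) the pointwise limit $\sup_n g_n = f^{*}$; and (iv) Mackey continuity of each $g_n$.

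For step (i), since $A$ is convex and symmetric one has $0 \in A$ and $sA \subseteq tA$ whenever $0 \le s \le t$, so $[0,n]A = nA$. Continuity of $f$ at $0 \in nA$ supplies the Moreau--Rockafellar qualification condition for the pair $(f,\delta_{nA})$, and hence
\[
g_n(x^{*}) \;=\; (f + \delta_{nA})^{*}(x^{*}) \;=\; \sup_{x \in nA}\bigl\{\langle x^{*},x\rangle - f(x)\bigr\},
\]
with the infimum in the inf-convolution attained. From this formula step (ii) is immediate: $nA \subseteq (n+1)A \subseteq X$ gives $g_n \le g_{n+1} \le f^{*}$.

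Step (iii) is the technical core. Using (i),
\[
\sup_{n} g_n(x^{*}) \;=\; \sup_{x\in \bigcup_n nA}\bigl\{\langle x^{*},x\rangle - f(x)\bigr\},
\]
and absolute convexity of $A$ yields $\bigcup_n nA = \operatorname{span}(A)$, which is norm-dense in $X$ by hypothesis. For $x \in \operatorname{int}(\operatorname{dom} f)$, continuity of $f$ at $x$ together with density of $\operatorname{span}(A)$ supplies $y_k \in \operatorname{span}(A)$ with $y_k \to x$ and $f(y_k) \to f(x)$, so $\sup_n g_n(x^{*}) \ge \langle x^{*},x\rangle - f(x)$. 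For $x \in \operatorname{dom} f$ lying on the boundary of $\operatorname{dom} f$, I would use the segment trick $x_t := (1-t)x + t\cdot 0 \in \operatorname{int}(\operatorname{dom} f)$ (valid because $0 \in \operatorname{int}(\operatorname{dom} f)$), the convexity bound $f(x_t) \le (1-t)f(x) + tf(0)$, and the approximation step just described, and then let $t \to 0$. Taking the supremum over $x \in \operatorname{dom} f$ gives $\sup_n g_n \ge f^{*}$, which combined with (ii) yields $g_n \nearrow f^{*}$.

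For (iv), the inf-convolution structure gives the Lipschitz-type estimate
\[
|g_n(x^{*}) - g_n(y^{*})| \;\le\; \sigma_{nA}(x^{*} - y^{*}),\qquad x^{*},y^{*} \in X^{*},
\]
by transporting a near-optimal decomposition at $y^{*}$ to one at $x^{*}$ and invoking subadditivity and symmetry of $\sigma_{nA}$. Finiteness of $g_n$ (needed for the estimate to be meaningful) follows from $g_n \ge -f(0)$ (Young's inequality together with $0 \in nA$) and the linear upper bound coming from boundedness of $nA$ and an affine minorant of $f$. Since $nA$ is convex, symmetric and weakly compact, $\sigma_{nA}$ is one of the defining seminorms of the Mackey topology $\mu^{*}$ on $X^{*}$, so Mackey continuity of $g_n$ follows at once. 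I expect the main obstacle to be step (iii): handling points of $\operatorname{dom} f$ that lie on the boundary of $\operatorname{dom} f$ using only continuity of $f$ at the single point $0$; the segment trick through $0$ is what lets the density of $\operatorname{span}(A)$ propagate into the entire effective domain.
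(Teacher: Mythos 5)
Your proposal is correct and is exactly the classical infimal-convolution approximation argument that the paper has in mind (the paper omits the proof, stating it "relies on the same ideas as the classical approximation of functions by infimal convolution"): the exact Fenchel duality identity $f^\ast\,\square\,\sigma_{[0,n]A}=(f+\delta_{nA})^\ast$ via continuity of $f$ at $0\in nA$, the monotone sup-formula over $\bigcup_n nA=\operatorname{sp}(A)$ with the line-segment trick through $0$ for boundary points of $\dom f$, and the estimate $|g_n(x^\ast)-g_n(y^\ast)|\le\sigma_{nA}(x^\ast-y^\ast)$, which gives Mackey continuity since $\sigma_{nA}$ is a defining seminorm of $\mu(X^\ast,X)$ (the same estimate the paper itself uses in \eqref{lipschitz}). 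No gaps noted.
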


\section{Proof of Theorem \ref{previo_a_attouch}}\label{Section:Proof_previo_a_attouch}

In this section, we provide a proof of Theorem~\ref{previo_a_attouch}.  
The result is first established for separable Banach spaces and then extended to general WCG Banach spaces by means of separable reduction techniques, supported by the use of projectional skeletons.

We begin by formally establishing a relation that always holds. This is stated in the following lemma.

\begin{lemma}\label{Lemma:inq_01conjugate}
	Let $X$ be a Banach space, and let $f_n\colon X\to\R\cup\{\infty\}$, $n\in\N$, be a sequence of lower semicontinuous proper convex functions.  
	Define $f := \eli f_n$. Then, one always has
	\begin{equation*}
		f^\ast \leq \wels f_n^\ast.
	\end{equation*}
\end{lemma}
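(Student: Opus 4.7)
The plan is to prove the inequality pointwise, i.e., to show $f^\ast(x^\ast)\le \wels f_n^\ast(x^\ast)$ for every $x^\ast\in X^\ast$, by directly unpacking the definitions of the Fenchel conjugate and of the two sequential $\Gamma$-limits. The core observation is the Fenchel--Young inequality
\[
f_n^\ast(x_n^\ast)\ \ge\ \langle x_n^\ast,x_n\rangle - f_n(x_n),
\]
together with the elementary fact that if $x_n\to x$ in norm and $x_n^\ast\xrightarrow{w^\ast} x^\ast$, then $\langle x_n^\ast,x_n\rangle\to \langle x^\ast,x\rangle$, which follows from the uniform boundedness of $w^\ast$-convergent sequences via the Banach--Steinhaus theorem.

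Concretely, I would fix $x^\ast\in X^\ast$ and an arbitrary sequence $x_n^\ast\xrightarrow{w^\ast} x^\ast$, and show
\begin{equation*}
f^\ast(x^\ast)\ \le\ \limsup_n f_n^\ast(x_n^\ast).
\end{equation*}
To this end, pick $x\in X$ with $f(x)<\infty$ (if no such $x$ exists the left-hand side is $-\infty$ and there is nothing to prove). By definition of $\eli f_n$, given $\varepsilon>0$ there exists $x_n\to x$ with $\liminf_n f_n(x_n)\le f(x)+\varepsilon$. Passing to a subsequence $(n_k)$ realizing this $\liminf$ and applying Fenchel--Young to each $n_k$ yields
\begin{equation*}
\limsup_n f_n^\ast(x_n^\ast)\ \ge\ \limsup_k f_{n_k}^\ast(x_{n_k}^\ast)\ \ge\ \langle x^\ast,x\rangle - \bigl(f(x)+\varepsilon\bigr),
\end{equation*}
where I use $\langle x_{n_k}^\ast, x_{n_k}\rangle\to\langle x^\ast,x\rangle$ as noted above. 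Sending $\varepsilon\downarrow 0$ and then taking the supremum over $x$ gives $\limsup_n f_n^\ast(x_n^\ast)\ge f^\ast(x^\ast)$. Finally, taking the infimum over all admissible sequences $x_n^\ast\xrightarrow{w^\ast} x^\ast$ delivers $\wels f_n^\ast(x^\ast)\ge f^\ast(x^\ast)$.

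There is no genuine obstacle in this lemma: it is an unconditional (no WCG, no equicoercivity) duality inequality, and every step above uses only the definitions of $\Gamma$-limits, the Fenchel--Young inequality, and the continuity of the pairing on strong$\times w^\ast$ convergent sequences. The only point requiring care is the passage to a subsequence realizing the $\liminf$ (since the $\Gamma\text{-}\liminf$ involves a $\liminf$, while the $\Gamma(w^\ast)\text{-}\limsup$ involves a $\limsup$), and the standard check that a $w^\ast$-convergent sequence is norm-bounded so that $|\langle x_n^\ast, x_n-x\rangle|\le \|x_n^\ast\|_\ast\|x_n-x\|\to 0$.
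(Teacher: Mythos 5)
Your proof is correct and follows essentially the same route as the paper's: the Fenchel--Young inequality along the approximating sequences, norm-boundedness of $w^\ast$-convergent sequences so that $\langle x_n^\ast,x_n\rangle\to\langle x^\ast,x\rangle$, and then optimization over the admissible sequences and over $x$. The paper only arranges the quantifiers slightly differently (taking the $\limsup$ directly for arbitrary sequences $x_n\to x$ and then passing to the infima and the supremum over $x$, with no $\varepsilon$/subsequence extraction), which also covers points where $f(x)=-\infty$ without separate comment.
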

\begin{proof}
	Let $\dual{x} \in \dual{X}$ and $(\dual{x}_n)_{n \in \N} \subseteq \dual{X}$ be such that $\dual{x}_n \overset{\ast}{\rightharpoonup} \dual{x}$.  
Fix $x \in X$ and a sequence $(x_n)_{n \in \N} \subseteq X$ with $x_n \to x$.  
By the definition of the conjugate, for every $n \in \N$ we have
\begin{equation*}
    \langle \dual{x}_n, x_n \rangle - f_n(x_n) \leq f_n^\ast(\dual{x}_n).
\end{equation*}

Taking the $\limsup$, we obtain
\begin{align*}
    \limsup_{n \to \infty} \big( \langle \dual{x}_n, x_n \rangle - f_n(x_n) \big) 
    &= \langle \dual{x}, x \rangle + \limsup_{n \to \infty} \big( - f_n(x_n) \big) \\
    &= \langle \dual{x}, x \rangle - \liminf_{n \to \infty} f_n(x_n) \\
    &\leq \limsup_{n \to \infty} f_n^\ast(\dual{x}_n).
\end{align*}

Since the choice of $(\dual{x}_n)_{n\in\N}$ with $\dual{x}_n \overset{\ast}{\rightharpoonup} \dual{x}$ was arbitrary, it follows that
\begin{equation*}
    \langle \dual{x}, x \rangle - \liminf_{n \to \infty} f_n(x_n) \;\leq\; \wels f_n^\ast(\dual{x}).
\end{equation*}

Moreover, since $(x_n)_{n \in \N} $ was also an arbitrary sequence converging to $x$, we deduce that
\begin{equation*}
    \langle \dual{x}, x \rangle - \eli f_n(x)\;\leq\;  \wels f_n^\ast(\dual{x}).
\end{equation*}

Equivalently,
\begin{equation*}
    \langle \dual{x}, x \rangle - f(x) \;\leq\; \wels f_n^\ast(\dual{x}).
\end{equation*}

Because this inequality holds for every $x \in X$, we conclude that
\[
    f^\ast(\dual{x}) \leq \wels f_n^\ast(\dual{x}),  \text{ for all } \dual{x} \in \dual{X},
\]
which completes the proof of the lemma.

\end{proof}

\subsection{Proof of Theorem \ref{previo_a_attouch} under separability of $X$}\label{Proof_previo_a_attouch_Separable}
In this subsection, we establish Theorem~\ref{previo_a_attouch} in the case where $X$ is separable.  

\begin{proof}
Suppose that $X$ is a separable Banach space and denote $f := \eli f_n$.  
From Lemma~\ref{Lemma:inq_01conjugate}, we already know that
\[
f^\ast \;\leq\; \wels f_n^\ast.
\]
It remains to prove the reverse inequality in the separable case, namely,
\[
f^\ast \;\geq\; \wels f_n^\ast.
\]

For that purpose, it is sufficient to show that  that for every $ x^* \in X^*$  there exists a sequence $(z_n^*)_{n\in\N}$ such that $z_n^* \overset{\ast}{\rightharpoonup} x^*$ and
\begin{equation}\label{final_inqu}
f^*(x^*) \geq \limsup_n f_n^*(z_n^*).
\end{equation}
 
Let $\{x_1, x_2, \dots\}$ be a countable dense subset of $X$.
Take $a \in \dom(\els f_n)$, and let $(a_n)_{n\in\N}$ be a sequence such that
\begin{align}\label{uniformemente_propias}
a_n \to a
\quad \text{and} \quad
\limsup_{n \to \infty} f_n(a_n) = \els f_n(a) < \infty.
\end{align}
In particular, the sequence $(f_n(a_n))_{n\in\N}$ may be assumed to be bounded from above.

Let $E_k$ be the vector space generated by $\{ x_1, \dots, x_k \}$. Define
\[
A_k = \{x \in E_k : \|x\| \leq k\}, \quad K := \{a_n\}_{n \in \mathbb{N}} \cup \{a\},
\]
so that $ A_k \cup K$ is compact with respect to the norm topology. Then set
\[
B_k := \overline{\operatorname{co}}(A_k \cup K).
\]
Observe that for each $k \in \mathbb{N}$, the set $B_k$ is convex  and compact (see, e.g., \cite[Theorem 5.35]{aliprantis}).

We define the following family of functions:
\begin{align}\label{deffnklambda}
    f_{n,k,\lambda} &= f_n + \delta_{B_k} + \frac{\lambda}{2} \|\cdot\|^2, \quad 
    f_{k,\lambda} = f + \delta_{B_k} + \frac{\lambda}{2} \|\cdot\|^2.
\end{align}
Since $\dom f_n \cap B_k \neq \emptyset$ and $\dom f \cap B_k \neq \emptyset$, all functions defined in \eqref{deffnklambda} are proper.

Now, we consider $x^\ast\in X^\ast$ fixed, but arbitrary. We aim to show \eqref{final_inqu} for some $\dual{z}_n \overset{\ast}{\rightharpoonup}\dual{x}$.
Observe that the case $\dual{f}(\dual{x})=\infty$ is trivial. Note also that because of \eqref{uniformemente_propias}, $f$ is not identically $\infty$, so the case $\dual{f}(\dual{x})=-\infty$  is not possible. This said, from now on, we will assume that $\dual{f}(\dual{x})\in\R$.

The remainder of the proof is divided into five claims.

\textbf{Claim 1:} \textit{For every $k \in \mathbb{N}$ and any $\lambda >0$  we have
\begin{equation} \label{eq1}
      \quad f_{k,\lambda}^*(x^*) \geq \limsup_n f_{n,k,\lambda}^*(x^*).
\end{equation}}
Indeed, we notice that \eqref{eq1}  is equivalent to 
\begin{align*}
    \liminf_n \inf_{u \in B_k} \left\{f_n(u) + \frac{\lambda}{2}\|u\|^2 - \langle x^*, u \rangle \right\} 
    \geq \inf_{u \in B_k} \left\{f(u) + \frac{\lambda}{2}\|u\|^2 - \langle x^*, u \rangle \right\}.
\end{align*}

If the left-hand side is equal to $\infty$  the inequality is trivial. Then, let us suppose that the  left-hand side is finite. Since $B_k$ is compact, we can find $u_n\in B_k$ such that 

\begin{equation*}
    \inf_{u \in B_k} \left\{f_n(u) + \frac{\lambda}{2}\|u\|^2 - \langle x^*, u \rangle \right\}  = f_n(u_n) + \frac{\lambda}{2}\|u_n\|^2 - \langle x^*, u_n \rangle 
\end{equation*}
Furthermore, by    definition of limit inferior,  we can find a subsequence $u_{n_j} \in B_{k}$ such that:
\begin{align*}
    \lim_j \left\{f_{n_j}(u_{n_j}) + \frac{\lambda}{2}\|u_{n_j}\|^2 - \langle x^*, u_{n_j} \rangle \right\} = \liminf_n \inf_{u \in B_k} \left\{f_n(u) + \frac{\lambda}{2}\|u\|^2 - \langle x^*, u \rangle \right\}.
\end{align*}
Moreover, again using the compactness of $B_k$, by passing to a subsequence, we can assume that  $u_{n_j}$   converges (strongly) in $B_k$ to some $\bar{u} \in B_k$. We define the sequence 
\begin{equation*}
    u'_k = \begin{cases}
         u_{n_j}  &\text{ if } k=n_j,\; \text{for some } j\in\N.\\
         \bar{u} &\text{ if } k\neq n_j,\;\forall j\in\N.
     \end{cases}
    \end{equation*}

Observe that $u'_k\to \bar{u}$. Now, using that $f = \eli f_n$, we obtain that
\begin{align*}
      \lim_j \left\{f_{n_j}(u_{n_j}) + \frac{\lambda}{2}\|u_{n_j}\|^2 - \langle x^*, u_{n_j} \rangle \right\} 
      &\geq \liminf_{j}f_{n_j}(u_{n_j}) +  \frac{\lambda}{2}\|\bar{u}\|^2 - \langle x^*, \bar{u} \rangle \\
      &\geq  \liminf_{j}f_{j}(u'_{j}) +  \frac{\lambda}{2}\|\bar{u}\|^2 - \langle x^*, \bar{u} \rangle\\
      &\geq 
      f(\bar{u})  + \frac{\lambda}{2}\|\bar{u}\|^2 - \langle x^*,\bar{u} \rangle\\
      &\geq  \inf_{u \in B_k} \left\{f(u) + \frac{\lambda}{2}\|u\|^2 - \langle x^*, u \rangle \right\}.
\end{align*}
 
which shows that \eqref{eq1} holds.

\textbf{Claim 2:}
\textit{We have that for all $k \in \N$, all $\lambda>0$, and any $n \in \N$:
\begin{align}\label{formula01}
f_{n,k,\lambda}^{\ast} 
   = \operatorname{cl}((f_{n}^{\ast})_{\lambda}\square \delta_{B_{k}}^{\ast}), 
\end{align}
where the closure is taken with respect to the weak$^\ast$ topology, that is,  $$\cl((f_{n}^{\ast})_{\lambda}\square \delta_{B_{k}}^{\ast})(z^{\ast}):= \inf\{\displaystyle\liminf_{d} f(z_d): D \text{ a directed set, } (z_d)_{d\in D} \text{ a net such that } z_d \overset{\ast}{\rightharpoonup}\dual{z}\},$$ for every $\dual{z} \in \dual{X}$}\\
 First, by \cite[Theorem 3]{rockafellar1966extension}, we have
\begin{align}\label{eqconjugate}
  \left(f_{n} + \frac{\lambda}{2} \norm{\cdot}^2\right)^{\ast} =   f^\ast_{n} \square\left(  \frac{\lambda}{2} \dualnorm{\cdot}^2\right)^{\ast}  =    \left(f_n^\ast\right)_\lambda.
\end{align}
where in the last equality we have used that $\left(  \frac{\lambda}{2} \norm{\cdot}^2\right)^{\ast} = \frac{1}{2\lambda } \dualnorm{\cdot }^2$. 
Next, due to the fact that  $\dom \left(f_n + \frac{\lambda}{2} \norm{\cdot}^2\right) \cap \dom \delta_{B_k} \neq \emptyset$ (recall that $a_n \in B_k$), we may apply \cite[Proposition 4.1.16]{Libro_Correa} to deduce
\begin{align}\label{eqconjugate2}
  f_{n,k,\lambda}^{\ast} 
  = \left(f_n + \frac{\lambda}{2} \norm{\cdot}^2 + \delta_{B_k}\right)^\ast 
  = \operatorname{cl} \left( \left(f_n + \frac{\lambda}{2} \norm{\cdot}^2\right)^{\ast} \square \delta_{B_k}^\ast \right).
\end{align}

Hence, combining \eqref{eqconjugate} and \eqref{eqconjugate2}, we conclude that \eqref{formula01} holds.

\textbf{Claim 3:}
 \textit{There is a positive sequence $(\lambda_n)_{n\in\N}$ converging to $0$, a sequence of natural numbers $(k_n)_{n\in\N}$ such that $ k_n\to \infty$ and a sequence $(\dual{u}_n)_{n\in\N}\subseteq \dual{X}$ with $$\displaystyle\max_{1\leq j\leq n} |\inner{x^{\ast} - u^{\ast}_{n}}{x_j}| \leq \frac{1}{n}$$ such that}

    \begin{align}\label{claim 2.1}
        f^{\ast}(x^{\ast}) \geq \limsup_{n}(f_{n}^{\ast})_{\lambda_n}\square \delta_{B_{k_n}}^{\ast}(u^{\ast}_{n}).
    \end{align}

Indeed,  since $f_{k,\lambda} \geq f$, it follows that $f^* \geq f_{k,\lambda}^*$. Combining this with \eqref{eq1} and using $\lambda(k) :=\frac{1}{k}$, we get
\begin{align*}
    f^*(x^*) \geq \limsup_n f_{n,k,\lambda(k)}^*(x^*).
\end{align*}
Therefore
\begin{align*}
    f^*(x^*) \geq \limsup_k \limsup_n f_{n,k,\lambda(k)}^*(x^*).
\end{align*}
Now we use the technical result provided in Lemma~\ref{Lemma:IntercambioLimsup} 
to deduce that there is a sequence of natural numbers $(k_n)_{n\in\N}$ such that $k_n \to \infty$ and

\begin{equation}\label{desigualdad claim 3.1}
    f^*(x^*) \geq \limsup_n f_{n,k_n,\lambda(k_n)}^*(x^*).
\end{equation}

Now, using  \eqref{formula01} and the fact that $f^\ast_{n,k,\lambda}$ is proper, we have that   $$f_{n,k,\lambda}^{\ast}(x^{\ast}) + \frac{1}{n} > \cl_{\dual{w}}((f_{n}^{\ast})_{\lambda}\square \delta_{B_{k}}^{\ast})(x^{\ast}).$$

It follows that there is a directed set $I_n$ and a net $(\dual{u}_{i,n,k,\lambda})_{i\in I_n}$ convergent to $\dual{x}$ such that \begin{equation*}
     f_{n,k,\lambda}^{\ast}(x^{\ast}) + \frac{1}{n} > \liminf_{i\in I _n}(f_{n}^{\ast})_{\lambda}\square \delta_{B_{k}}^{\ast}(\dual{u}_{i,n,k,\lambda})
\end{equation*}

So we can construct a sequence $(u^{\ast}_{n,k,\lambda})_{n\in\N}\subseteq X^{\ast}$ such that

\begin{align*}
    f_{n,k,\lambda}^{\ast}(x^{\ast}) + \frac{1}{n} &> (f_{n}^{\ast})_{\lambda}\square \delta_{B_{k}}^{\ast}(u^{\ast}_{n,k,\lambda}).\\
    \max_{1\leq j\leq n} |\inner{x^{\ast} - u^{\ast}_{n,k,\lambda}}{x_j}| &\leq \frac{1}{n}.
\end{align*}

We can apply this to $k=k_n,\; \lambda = \lambda_n:= \lambda(k_n)$, and denoting by $u^{\ast}_{n} := u_{n,k_n,\lambda_n}^{\ast}$, we combine the latter with \eqref{desigualdad claim 3.1} to obtain that

\begin{align*}
    f^{\ast}(x^{\ast}) \geq \limsup_{n}(f_{n}^{\ast})_{\lambda_n}\square \delta_{B_{k_n}}^{\ast}(u^{\ast}_{n}),
\end{align*}

with $\displaystyle\max_{1\leq j\leq n} |\inner{x^{\ast} - u^{\ast}_{n}}{x_j}| \leq \frac{1}{n}$.

\textbf{Claim 4:}
    \textit{There are bounded sequences $(x_n^\ast)_{ n \in \N}$ and $(\dual{y}_n)_{n \in \N}$ such that $(\dual{f}_n(\dual{y}_n))_{n\in\N}$ is bounded below, $\norm{\dual{y}_n-\dual{x}_n}_\ast \to 0$ and}
    \begin{equation}\label{exact_inf-convolution}
      \dual{f}(\dual{x})  \geq \limsup_{n} f_{n}^{\ast}(y^{\ast}_{n})+ \frac{1}{2\lambda_n}\dualnorm{x_{n}^{\ast}-y^{\ast}_{n}}^2
    \geq \limsup_{n} f_{n}^{\ast}(y^{\ast}_{n}).
    \end{equation}

Indeed, using the inf-convolution of \eqref{claim 2.1}, we obtain that  for each $n\in\N$  there exists some $\dual{x}_n \in\dual{X}$ such  that $$(f_{n}^{\ast})_{\lambda_n}\square \delta_{B_{k_n}}^{\ast}(u^{\ast}_{n})+\frac{1}{n}\geq (f_{n}^{\ast})_{\lambda_n}(x_{n}^{\ast})+ \delta_{B_{k_n}}^{\ast}(u_{n}^{\ast} - x_{n}^{\ast}).$$

Then, using \eqref{claim 2.1} and the fact that for every $n\in\N$ there is $\dual{y_n}\in\dual{X}$ such that $$\frac{1}{n}+\displaystyle\inf_{u^{\ast}\in X^{\ast}}\{f_{n}^{\ast}(u^{\ast})+ \frac{1}{2\lambda_n}\dualnorm{x_{n}^{\ast}-u^{\ast}}^2\}\geq f_{n}^{\ast}(y^{\ast}_{n})+ \frac{1}{2\lambda_n}\dualnorm{x_{n}^{\ast}-y^{\ast}_{n}}^2,$$ we deduce that

\begin{equation}\label{2}
\begin{aligned}
    f^{\ast}(x^{\ast}) &\geq \limsup_{n} \left(  (f_{n}^{\ast})_{\lambda_n}(x_{n}^{\ast})+ \delta_{B_{k_n}}^{\ast}(u_{n}^{\ast} - x_{n}^{\ast}) \right)\\
    &= \limsup_{n}\left( \delta_{B_{k_n}}^{\ast}(u_{n}^{\ast} - x_{n}^{\ast})+\inf_{u^{\ast}\in X^{\ast}}\{f_{n}^{\ast}(u^{\ast})+ \frac{1}{2\lambda_n}\dualnorm{x_{n}^{\ast}-u^{\ast}}^2\}\right)\\
    &\geq \limsup_{n} \left(\delta_{B_{k_n}}^{\ast}(u_{n}^{\ast} - x_{n}^{\ast})+  f_{n}^{\ast}(y^{\ast}_{n})+ \frac{1}{2\lambda_n}\dualnorm{x_{n}^{\ast}-y^{\ast}_{n}}^2\right)\\
    &\geq \limsup_{n}\left(\delta_{B_{k_n}}^{\ast}(u_{n}^{\ast} - x_{n}^{\ast})+ f_{n}^{\ast}(y^{\ast}_{n})\right)\\
    &\geq \limsup_{n} f_{n}^{\ast}(y^{\ast}_{n}).
\end{aligned}
\end{equation}

Since $\dual{f}(x^{\ast})$ is finite, $(f_{n}^{\ast}(y^{\ast}_{n}))_{n \in \N}$ is bounded above, and by the property of equicoercivity \eqref{equicoercividad}, we conclude that $(y_{n}^{\ast})_{n \in \N}$ is bounded.\\

Furthermore, let us note that $(f_{n}^{\ast}(y^{\ast}_{n}))_{ n \in  \N}$ is bounded below. In fact, using the sequence $(a_n)_{n \in \N}$ from (\ref{uniformemente_propias}):

\begin{equation*}
    \begin{split}
        \liminf_{n} f_{n}^{\ast}(y^{\ast}_{n}) & \geq \liminf_{n}{\inner{y_{n}^{\ast}}{a_n} - f_{n}(a_n)}\\
        &\geq \liminf_{n}{(-\dualnorm{y_{n}^{\ast}}\norm{a_n} - f_{n}(a_n))}\\
        &= -\limsup_{n}({\dualnorm{y_{n}^{\ast}}\norm{a_n} +f_{n}(a_n)}).
    \end{split}
\end{equation*}

Since the sequences $(f_{n}(a_n))_{n\in\N}$ and $(\norm{y_{n}^{\ast}}\norm{a_n})_{n\in\N}$ are bounded above, we then get \begin{equation*}
    \displaystyle\liminf_{n} f_{n}^{\ast}(y^{\ast}_{n})> -\infty  
\end{equation*}
and the sequence $(f_{n}^{\ast}(y^{\ast}_{n}))_{n\in\N}$ is bounded below.

Then, returning to \eqref{2}, since $\lambda_n\to 0$ and $(f_{n}^{\ast}(y^{\ast}_{n}))_{n\in\N}$ is bounded below, it follows that $\dualnorm{x_{n}^{\ast}-y^{\ast}_{n}}\to 0$. Thus, since the sequence $(y_{n}^{\ast})_{n\in\N}$ is bounded, it follows that $(x_{n}^{\ast})_{n\in\N}$ is also bounded.\\

\textbf{Claim 5:}\label{claim 5.1}
    \textit{The sequence $(\dual{y}_n)_{n\in\N}$ from \eqref{exact_inf-convolution} satisfies $\dual{y}_n \overset{\ast}{\rightharpoonup} x^{\ast}$.}

We will show that the sequence $(\dual{x}_n)_{n\in\N}$ from \eqref{exact_inf-convolution} satisfies $x_{n}^{\ast}  \overset{\ast}{\rightharpoonup} x^{\ast}$. In fact, using \eqref{2} and that the sequence $(f_{n}^{\ast}(y^{\ast}_{n}))_{n\in\N}$ is bounded below, we obtain that

\begin{equation} \label{eqClaim5_3}
\begin{aligned}
  f^{\ast}(x^{\ast}) &\geq \limsup_{n} f_{n}^{\ast}(y^{\ast}_{n}) + \delta_{B_{k_n}}^{\ast}(u_{n}^{\ast} - x_{n}^{\ast})\\
  &\geq \inf_{n \in \N } f^\ast (y^\ast_n)  + \limsup_{n}\delta_{A_{k_n}}^{\ast}(u_{n}^{\ast} - x_{n}^{\ast}).
\end{aligned}
\end{equation}

Note that by  \cite[Proposition 4.1.20]{Libro_Correa}, we have
\begin{equation*}
\begin{split}
    \delta_{A_{k_n}}^{\ast} &= (\delta_{E_{k_n}}+ \delta_{\{x\in X:\norm{x}\leq k_n\}})^{\ast} = \delta_{E_{k_n}}^{\ast}\square \delta_{\{x\in X:\norm{x}\leq k_n\}}^{\ast}= \delta_{E_{k_n}^{\perp}} \square k_n\norm{\cdot}_{\ast} =  k_nd(x^{\ast},E_{k_n}^{\perp}),
    \end{split}
\end{equation*}
where $d(x^{\ast},E_{k_n}^{\perp})$ denotes the distance from $x^\ast$ to $E_{k_n}^{\perp}$.

Using this and \eqref{eqClaim5_3}, we obtain that 
\begin{equation*}
f^{\ast}(x^{\ast})-\inf_{n \in \N } f^\ast (y^\ast_n)  \geq \displaystyle\limsup_{n}{k_nd(u_{n}^{\ast} -x_{n}^{\ast},E_{k_n}^{\perp})}.
\end{equation*}
 Since  $k_n\to \infty$, it follows that
\begin{equation*} 
    d(u_{n}^{\ast} -x_{n}^{\ast},E_{k_n}^{\perp}) \to 0.
\end{equation*}
 
Then, given $\varepsilon>0$, there is $n_0 \in \N$ such that for all $n\geq n_0$ there is $\dual{h}_n \in E_{k_n}^{\perp}$ such that

\begin{equation} \label{4}
   \norm{\dual{u}_n -\dual{x}_n-\dual{h}_n}_{\ast} \leq \varepsilon.
\end{equation}

Let $D := \displaystyle\bigcup_{k\in\N} E_{k}$, $D$ is dense. Let $x\in D$; since $D$ is an increasing union, it follows that $x\in E_{k_n}$ for sufficiently large $n$.\\

We have that $\inner{u_{n}^{\ast} -x_{n}^{\ast}}{x}= \inner{u_{n}^{\ast} -x_{n}^{\ast}- \dual{h}_n}{x}$.\\

Consequently, $\abs{\inner{u_{n}^{\ast} -x_{n}^{\ast}}{x}}\leq \norm{\dual{u}_n -\dual{x}_n-\dual{h}_n}_{\ast}\norm{x}\leq \varepsilon\norm{x}$. Which, by \eqref{4}, implies that 

\begin{equation*}
 \inner{u_{n}^{\ast} -x_{n}^{\ast}}{x}\to 0 \text{ for all } x \in D.
\end{equation*}

Let us prove that $x_{n}^{\ast}  \overset{\ast}{\rightharpoonup} x^{\ast}$. Since $(x_{n}^{\ast})_{n\in\N}$ is norm bounded and $\{x_j:j\in\N\}$ is dense in $X$, all we need to prove is that 
\begin{equation*}
     \inner{\dual{x}_n-\dual{x}}{x_j}\to 0 \text{ for all } j \in  \N.
\end{equation*}

So let $j\in\N$ and $j\geq n$, then
\begin{equation*}
    |\inner{\dual{x}_n -\dual{x}}{x_j }| = |\inner{\dual{x}_n -\dual{u}_n}{x_j } + \inner{\dual{u}_n -\dual{x}}{x_j } |\leq  |\inner{\dual{x}_n -\dual{u}_n}{x_j }| + \frac{1}{n}.
\end{equation*}

It follows that $|\inner{\dual{x}_n -\dual{x}}{x_j }|\to 0$ and this is valid for every $j\in\N$,  hence $x_{n}^{\ast}  \overset{\ast}{\rightharpoonup} x^{\ast}$.\\

Finally, since $\dualnorm{x_{n}^{\ast}-y^{\ast}_{n}}\to 0$, it is concluded that 

\begin{equation}\label{convergencia}
    y_{n}^{\ast}  \overset{\ast}{\rightharpoonup} x^{\ast}.
\end{equation}

Since by \eqref{exact_inf-convolution} and \eqref{convergencia} the sequence $(\dual{y}_n)_{n\in\N}$ satisfies $\dual{y}_n \overset{\ast}{\rightharpoonup} \dual x$ and also $\dual{f}(\dual{x})\geq \displaystyle\limsup_{n}\dual{f}_n(\dual{y}_n)$, it is enough to choose $\dual{z}_n=\dual{y}_n$ for each $n\in\N$ in \eqref{final_inqu}. The proof in the case when X is assumed separable is concluded.
\end{proof}
 \subsection{Separable reduction and rich families}\label{Sec_Sep_red_rich}
 We say that a property \(\mathcal{P}\) concerning a Banach space \(X\), is \emph{separably reducible} (or \emph{separably determined}) if there exists a family \(\mathcal{F}\) of separable subspaces of \(X\) such that   $\cup\left\{ F  : F  \in \mathcal{F}  \right\}     = X$  
 and  the property $\mathcal{P}$ holds in $X$  if and only if for every $  F\in\mathcal{F} $ the property $\mathcal{P}$ holds at $F$.

 Separable reducibility is a common tool in functional analysis, as it often leads to substantial simplifications and, in some cases, provides the only viable approach to proving certain results. A simple example of a separably reducible property is continuity of
 a mapping. For a list of some less trivial properties that are separable determined in analysis, we refer the reader to \cite{fabian_separable_determined}.
 There are special families of separable subspaces of a given Banach space that allow the implementation of a \emph{separable reduction}, that is, the possibility to “lift” a property from separable subspaces to the entire space, since they allow one to deal with countable many different hypotheses. These special families are called rich families, and they were introduced by J.M Borwein and W.B Moors in \cite{Borwein}. Rich families were deeply discussed in connection with a number of
 properties related to differentiability and subdifferentiability in \cite{annals} and \cite{rich_families_subdifferential}.
 \begin{definition}
 	Let $Z$ be a (rather) non-separable metric (or normed) space. Denote by $\mathcal{S}(Z)$ the family of all closed separable subsets (or closed separable subspaces) of $Z$. A subfamily $R \subset \mathcal{S}(Z)$ is called \textit{rich} if:
 	
 	\begin{enumerate}
 		\item[(i)] For every $S \in \mathcal{S}(Z)$, there exists $Y \in R$ such that $S \subset Y$.
 		\item[(ii)] Whenever $Y_1 \subset Y_2 \subset \cdots \in R$, then $Y := \adh{\displaystyle\bigcup_{n\in\N} Y_n} \in R$.
 	\end{enumerate}
 \end{definition}
 The main property and the power of rich families is shown in the following fundamental fact (see \cite[Proposition 1.1]{Borwein}).
 
 \begin{proposition}\label{rich families}
 	The intersection of countably many rich families of a given partial order is (not only non-empty but also) rich.
 \end{proposition}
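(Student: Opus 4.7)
The plan is to verify the two defining properties of richness directly for $R := \bigcap_{n \in \mathbb{N}} R_n$. Property (ii) (stability under closures of increasing chains) will be essentially free: any nested sequence $Y_1 \subset Y_2 \subset \cdots$ inside $R$ is, by definition, a nested sequence inside each individual $R_n$, so applying property (ii) of every $R_n$ yields $\adh{\bigcup_k Y_k} \in R_n$ for all $n$, hence in $R$. The entire substantive content of the proposition is therefore property (i), namely showing that any $S \in \mathcal{S}(Z)$ is contained in some $Y \in R$.

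For (i) my approach will be a diagonal interleaving construction. I will fix a surjection $e \colon \mathbb{N} \to \mathbb{N}$ for which every $j \in \mathbb{N}$ has infinite preimage, and build inductively a nested chain $S \subseteq Y_0 \subseteq Y_1 \subseteq \cdots$ in $\mathcal{S}(Z)$ with $Y_k \in R_{e(k)}$ for every $k$. Each step is legitimate because $R_{e(k)}$ is itself rich: applying property (i) of $R_{e(k)}$ to the separable set $Y_{k-1}$ produces some $Y_k \in R_{e(k)}$ containing it. I will then set $Y := \adh{\bigcup_k Y_k}$, which is separable and contains $S$, and is the candidate member of $R$.

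To verify that $Y \in R_j$ for a fixed but arbitrary $j$, I will extract the subsequence $(Y_{k_i})_i$ whose indices satisfy $e(k_i) = j$. By construction it is increasing and lies entirely in $R_j$. Because $e$ attains the value $j$ infinitely often and $(Y_k)$ is nested, every $Y_k$ is contained in some $Y_{k_i}$ with $k_i \geq k$, so $\bigcup_i Y_{k_i} = \bigcup_k Y_k$ and hence $Y = \adh{\bigcup_i Y_{k_i}}$. Property (ii) of $R_j$ then gives $Y \in R_j$. Since $j$ was arbitrary, $Y \in R$, which establishes (i) and thereby the proposition.

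The only non-automatic idea is realizing that a naive sequential enrichment through $R_1, R_2, \ldots$ will not close up (after enriching via $R_n$ there is no reason the new set still lies in $R_1, \ldots, R_{n-1}$), so the enrichments must be interleaved via a surjection with infinite fibers; once that observation is in place, property (ii) of each $R_j$ recovers membership in $R_j$ from the subsequence indexed by $e^{-1}(j)$, and the argument is routine.
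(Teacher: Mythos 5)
Your proof is correct: the interleaving of the families via a surjection with infinite fibers, followed by an application of property (ii) of each $R_j$ to the cofinal subchain indexed by $e^{-1}(j)$, is exactly the standard argument for this fact. The paper itself offers no proof and simply cites \cite[Proposition 1.1]{Borwein}, where essentially this same diagonal construction appears, so your argument matches the intended one.
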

 Let $k \in \mathbb{N}$ be greater than $1$, and let $X_1, \dots, X_k$ be Banach spaces. 
 A \emph{block} is any product of the form $Y_1 \times \cdots \times Y_k$. 
 A \emph{block-family} in $\mathcal{S}(X_1 \times \cdots \times X_k)$ (or simply in $X_1 \times \cdots \times X_k$) 
 is any family consisting of such blocks, where $Y_i \in \mathcal{S}(X_i)$ for $i = 1, \dots, k$.
 
 In the special case $k=2$, we refer to blocks as \emph{rectangles} and to block-families as \emph{rectangle-families}. 
 We denote by $
 \mathcal{S}_{\rectangle{}}(X_1 \times X_2)$
 the maximal rectangle-family in $\mathcal{S}(X_1 \times X_2)$. 
 Clearly, $\mathcal{S}_{\rectangle{}}(X_1 \times X_2)$ is a rich family in $\mathcal{S}(X_1 \times X_2)$.

 \subsection{Projectional skeletons}
 Now, let us introduce the notion of \emph{projectional skeletons}, proposed in \cite{kubis2009banach}. 
 This tool has proved to be both efficient and prominent in characterizing properties of nonseparable spaces by means of seemingly simpler techniques that rely on the separability of the space itself, or on the separability of certain associated function spaces.
 
 \begin{definition}
 	A \emph{projectional skeleton} in the (rather non-separable) Banach space 
 	\((X, \| \cdot \|)\) is a family of linear bounded projections 
 	\((P_s : s \in \Gamma)\) on \(X\), indexed by a directed and 
 	\(\sigma\)-complete set \((\Gamma, \leq)\) such that
 	\begin{enumerate}
 		\item[(i)] \(P_s (X)\) is separable for every \(s \in \Gamma\),
 		\item[(ii)] \(X = \displaystyle\bigcup_{s \in \Gamma} P_s (X)\),
 		\item[(iii)] \(P_t \circ P_s = P_s = P_s \circ P_t\) whenever \(s,t \in \Gamma\) and \(s \leq t\), and
 		\item[(iv)] Given a sequence \(s_1 \leq s_2 \leq \cdots\) in \(\Gamma\), we have
 		\[
 		P_s (X) = \overline{\bigcup_{n \in \mathbb{N}} P_{s_n} (X)},
 		\]
 		where \(s := \displaystyle\sup_{n \in \mathbb{N}} s_n\).
 	\end{enumerate}
 	For $r \geq 1$, we say that $(P_s : s \in \Gamma)$ is an \emph{$r$-projectional skeleton} if it is a projectional skeleton and 
\[
    \|P_s(x)\| \leq r \|x\| \text{ for all } s \in \Gamma \text{ and } x \in X.
\]

In addition, we say that a projectional skeleton $(P_s : s \in \Gamma)$ is \emph{commutative} if 
\[
    P_s \circ P_t = P_t \circ P_s, \text{ for all }  s,t \in \Gamma.
\]
 \end{definition}
Let $A \subset X$ be a nonempty bounded set. 
We consider the pseudometric $\rho_A$ on $X^*$ by
\[
    \rho_A(x_1^*, x_2^*) := \sup \left\{ \big| \langle x_1^* - x_2^*, a \rangle \big| : a \in A \right\}, 
    \qquad x_1^*, x_2^* \in X^*.
\]

Given $\varepsilon \geq 0$, a projectional skeleton $(P_s : s \in \Gamma)$ 
in $X$ is called \emph{$A$-$\varepsilon$-shrinking} if, for every increasing sequence 
$\gamma_1 \leq \gamma_2 \leq \cdots$ in $\Gamma$ and for every $x^* \in X^*$, we have
\[
    \limsup_{j} \rho_A \big( P_{\gamma_j}^* (x^*), 
    P_{\sup \gamma_i}^* (x^*) \big) \leq \varepsilon \|x^*\|.
\]
If $\varepsilon = 0$, we simply say that the skeleton is \emph{$A$-shrinking}.

The following is a characterization of weakly compactly generated Banach spaces with projectional skeletons, as presented in \cite{fabian2018wcg}.

 \begin{theorem}\label{familia_rica_wcg}
 	For a Banach space $(X, \|\cdot\|)$, the following are equivalent:
 	\begin{itemize}
 		\item[(i)] $X$ is weakly compactly generated.
 		\item[(ii)] There exist a bounded closed symmetric convex and linearly dense set $A \subset X$, and a \textit{(commutative 1-)} projectional skeleton $(P_\gamma : \gamma \in \Gamma)$ on $(X, \|\cdot\|)$ (with $\displaystyle\bigcup_{\gamma \in \Gamma} P_\gamma^* (X^*) = X^*$), which is $A$-shrinking and satisfies that $P_\gamma(A) \subset A$ for every $\gamma \in \Gamma$.
 	\end{itemize}
 	Moreover, the index set of the skeleton $\Gamma$ may be assumed to be a rich family in $\mathcal{S}_{\rectangle{}}(X \times X^*)$ such that for every $\gamma:=V\times Y\in \Gamma$, we have that $X= V \oplus Y^{\perp}$ and $P_\gamma$ is the projection of $X$ onto $V$, i.e, with range $V$ and kernel $Y^{\perp}$, and also $\dual{P}_{\gamma}(\dual{X})= \overline{Y}^{\dual{w}}$.
 \end{theorem}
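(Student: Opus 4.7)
The plan is to establish the two implications separately, with (ii) $\Rightarrow$ (i) being substantially simpler.

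For the direction (ii) $\Rightarrow$ (i), the goal is to prove that $A$ is weakly compact; combined with the linear density of $A$ this yields that $X$ is WCG. Given a sequence $(a_n) \subset A$, using properties (ii)--(iv) of the projectional skeleton together with the $\sigma$-completeness of $\Gamma$, I would select an increasing chain $\gamma_1 \leq \gamma_2 \leq \cdots$ with $a_n \in P_{\gamma_n}(X)$, and set $\gamma := \sup_n \gamma_n \in \Gamma$, so that $(a_n) \subset P_\gamma(X)$, which is separable. For any $x^* \in X^*$, the $A$-shrinking property combined with $\bigcup_\gamma P_\gamma^*(X^*) = X^*$ implies that $x^*$ can be $\rho_A$-approximated by elements of $P_\gamma^*(X^*)$. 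Duality between $P_\gamma(X)$ and $P_\gamma^*(X^*)$ together with separability then allow extracting a weakly convergent subsequence of $(a_n)$ via an Eberlein--\v{S}mulian argument.

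For the direction (i) $\Rightarrow$ (ii), I would start from a weakly compact, symmetric, convex, linearly dense set $A \subset X$ and invoke the Amir--Lindenstrauss theorem to obtain a projectional resolution of the identity $(P_\alpha)_{\omega \leq \alpha \leq \mu}$ on $X$ compatible with $A$ in the sense that $P_\alpha(A) \subset A$ for every $\alpha$. From this PRI, a commutative $1$-projectional skeleton is constructed by indexing over a rich family $\Gamma \subset \mathcal{S}_{\rectangle{}}(X \times X^*)$: each $\gamma = V \times Y \in \Gamma$ determines, via elementary-substructure-type arguments, the unique projection $P_\gamma$ of $X$ onto $V$ along $Y^\perp$. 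The norm-one bound, commutativity, and density $\bigcup_\gamma P_\gamma^*(X^*) = X^*$ follow from the corresponding properties of the PRI, while the $A$-shrinking property is obtained from the weak compactness of $A$ by a Grothendieck-type argument upgrading pointwise weak$^*$-convergence of $(P_{\gamma_n}^*(x^*))_n$ to uniform convergence on $A$.

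The principal obstacle lies in the direction (i) $\Rightarrow$ (ii), since the desired projectional skeleton must satisfy \emph{all} the required properties simultaneously. Guaranteeing that $\Gamma$ is a rich family in $\mathcal{S}_{\rectangle{}}(X \times X^*)$ -- where Proposition~\ref{rich families} is used crucially to intersect the countably many defining rich subfamilies -- that $X = V \oplus Y^\perp$ for every $\gamma = V \times Y \in \Gamma$ with the associated projection norm-one and commuting with the other skeleton projections, and that the skeleton is $A$-shrinking with $P_\gamma(A) \subset A$, is technically demanding. The required elementary-substructure framework and rich-family machinery are systematically developed in \cite{kubis2009banach} and \cite{fabian2018wcg}, which I would invoke to complete the argument.
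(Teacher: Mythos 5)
The paper's own ``proof'' of this theorem is essentially a citation: the equivalence of (i) and (ii) is taken verbatim from \cite[Theorem 21]{fabian2018wcg}, and the ``moreover'' clause is read off from the construction carried out there (Lemmas 1 and 2, Remarks 3 and 12). Since your proposal ends by delegating every technically demanding step to \cite{kubis2009banach} and \cite{fabian2018wcg}, you ultimately land in the same place as the paper, and as a plan that is acceptable.

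However, the independent mechanisms you interpolate do not all stand up, so they should not be presented as the argument. The concrete problem is your derivation of the $A$-shrinking property in (i)$\Rightarrow$(ii): you claim it follows by ``a Grothendieck-type argument upgrading pointwise weak$^\ast$-convergence of $(P_{\gamma_n}^\ast(x^\ast))_n$ to uniform convergence on $A$.'' No such upgrade exists: a bounded sequence of functionals may converge pointwise (equivalently weak$^\ast$) and still fail to converge uniformly on a weakly compact set --- the coordinate functionals $e_n^\ast$ on the unit ball of $\ell_2$ converge weak$^\ast$ to $0$ while their supremum over the ball stays equal to $1$. The $A$-shrinking property is a feature of the particular skeleton constructed in \cite{fabian2018wcg}, obtained through that construction, not an automatic consequence of weak compactness of $A$ together with $P_\gamma(A)\subset A$. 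Similarly, passing from an Amir--Lindenstrauss PRI to a commutative $1$-projectional skeleton indexed by a rich family in $\mathcal{S}_{\rectangle{}}(X\times \dual{X})$ with $X=V\oplus Y^{\perp}$ and $\dual{P}_{\gamma}(\dual{X})=\overline{Y}^{\dual{w}}$ --- which is precisely the ``moreover'' statement the paper later relies on (e.g.\ in Corollary~\ref{surjective_isometry}) --- is the substantial step, and your sketch does not engage with it beyond deferring to the references. Since the paper itself only cites these facts, the citation-based route is fine; but the specific arguments you sketch, notably the shrinking step, would fail if taken literally and should be replaced by the reference.
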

 
 \begin{proof}
 	The equivalence between items (i) and (ii) follows from \cite[Theorem 21]{fabian2018wcg}. The fact that the index set of the skeleton $\Gamma$ may be assumed to be a rich family in $\mathcal{S}_{\rectangle{}}(X \times X^*)$ with the stated properties above it is implicit in the construction given by the authors in \cite{fabian2018wcg}; see in particular \cite[Lemma 1, Lemma 2, Remark 3, and Remark 12]{fabian2018wcg}. 
 \end{proof}
\subsection{Preliminary Results on Separable Reduction Techniques}\label{section_ResultsOn_SEPRED}

To address the case where $X$ is nonseparable, we first establish the existence of suitable rich families. 
These tools will enable us to extend Theorem~\ref{previo_a_attouch}, previously proved under the separability assumption, 
to the general setting of WCG Banach spaces. 

{\color{red}It is important to note that all the rich families constructed in this section will be rectangle rich families, that is, subsets of $\mathcal{S}_{\rectangle{}} (X\times X^\ast)$ even when certain properties concern only the primal or the dual space individually. This convention simplifies the analysis, as it allows us to work with families that simultaneously satisfy conditions in both the primal and dual spaces by considering their intersections.}

The next two lemmas are formulated in the framework of general Banach spaces. 
They show that certain properties of functions admit separable reduction.
 
The first result concerns the nonemptiness of the domain of the upper $\Gamma$-limit, in order to show that the hyphotesis $\dom(\els f_n))\neq\emptyset$ of Theorem \ref{previo_a_attouch} is true if we restrict each $f_n$ to certain separable subspaces.
{\color{red}\begin{proposition}\label{Fam_unif_propia}
   Let
$f_n\colon X \to \R\cup\{\infty\}$, $n\in\N$, be a sequence of functions such that $\dom \els f_n \neq \emptyset$, then there is a rich family $\mathcal{R} \subseteq \mathcal{S}_{\rectangle{}}(X\times X^\ast)$ such that for every $V \times Y\in \mathcal{R}$, the sequence $(f_{n}|_V)_{n\in\N}$ satisfies $\dom (\els (f_{n}|_V)) \neq \emptyset$.
\end{proposition}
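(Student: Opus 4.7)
The plan is to extract a single witness of non-emptiness and then build a rectangle rich family whose primal components all contain this witness and an associated convergent sequence. The second component $Y$ will play no active role in the argument, so we can just let it range over $\mathcal{S}(X^{\ast})$.

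More precisely, since $\dom(\els f_n)\neq\emptyset$, I would first fix some $a\in\dom(\els f_n)$ and, using the fact that the norm topology is first countable so that the $\limsup$ in \eqref{Def_epi_limits} is attained, choose a sequence $(a_n)_{n\in\N}\subseteq X$ with $a_n\to a$ and
\[
  \limsup_n f_n(a_n)=\els f_n(a)<\infty.
\]
Set $C:=\{a\}\cup\{a_n:n\in\N\}\subseteq X$, which is countable, and define
\[
  \mathcal{R}:=\bigl\{V\times Y\in\mathcal{S}_{\rectangle{}}(X\times X^{\ast}) : C\subseteq V\bigr\}.
\]

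Next I would verify the two axioms of richness. For (i), given any $S\in\mathcal{S}(X\times X^{\ast})$, let $V_{0}:=\overline{\pi_{X}(S)}$ and $Y_{0}:=\overline{\pi_{X^{\ast}}(S)}$, and set $V:=\overline{\operatorname{sp}}(V_{0}\cup C)$ and $Y:=Y_{0}$. Since $V_0$ is separable and $C$ is countable, $V$ is a separable closed subspace containing $C$, so $V\times Y\in\mathcal{R}$ and $S\subseteq V\times Y$. For (ii), if $V_{1}\times Y_{1}\subseteq V_{2}\times Y_{2}\subseteq\cdots$ is an increasing chain in $\mathcal{R}$, then each $V_{k}$ contains $C$, hence $\overline{\bigcup_{k}V_{k}}$ contains $C$ and $\overline{\bigcup_{k}Y_{k}}$ is a closed separable subspace; the product $\overline{\bigcup_{k}V_{k}}\times\overline{\bigcup_{k}Y_{k}}$ is the closure of $\bigcup_{k}(V_{k}\times Y_{k})$ and belongs to $\mathcal{R}$.

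Finally, I would check the conclusion: for any $V\times Y\in\mathcal{R}$, since $C\subseteq V$, the sequence $(a_n)$ lies in $V$, converges to $a\in V$ in the norm topology of $V$, and
\[
  \limsup_{n} (f_n|_V)(a_n)=\limsup_{n} f_n(a_n)<\infty,
\]
so $a$ is an admissible limit point in the definition of $\els(f_n|_V)$ and therefore $a\in\dom(\els(f_n|_V))$, as required.

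I do not expect a serious obstacle here: the whole argument is essentially a bookkeeping step to frame a pointwise witness inside a rich family. The only mildly delicate point is to respect the rectangle structure demanded by $\mathcal{S}_{\rectangle{}}(X\times X^{\ast})$; this is handled by letting the dual component $Y$ be arbitrary while imposing the only substantive constraint (containment of the countable set $C$) on the primal component $V$.
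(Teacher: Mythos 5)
Your proposal is correct and takes essentially the same approach as the paper: the paper simply sets $\mathcal{R}':=\{V\in\mathcal{S}(X) : \dom(\els (f_n|_V))\neq\emptyset\}$, declares it ``clearly rich'', and takes $\mathcal{R}'\times\mathcal{S}(X^\ast)$, while your fixed witness set $C=\{a\}\cup\{a_n\}$ just makes the cofinality and $\sigma$-completeness checks explicit (your subfamily is contained in the paper's). The only cosmetic point is that if $\mathcal{S}(X^\ast)$ is read as closed separable \emph{subspaces}, you should replace $Y_0=\overline{\pi_{X^\ast}(S)}$ by $\overline{\operatorname{sp}}(\pi_{X^\ast}(S))$, which changes nothing in the argument.
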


\begin{proof}
    Define $\mathcal{R}':= \{V\in \mathcal{S}(X) : \dom (\els (f_{n}|_V)) \neq \emptyset\}$.   This family is clearly rich. Consequently the family
    $ \mathcal{R} := \mathcal{R}' \times \mathcal{S}(X)$ is the desired family. 
\end{proof} 
}
 The second result establishes a separable reduction principle: the values of the conjugate of a function can be recovered from the conjugates of its restrictions to suitable separable subspaces.

\begin{proposition}\label{3.1}
	Let $X$ be a Banach space and let $f\colon X \to \R \cup \{\infty\}$ be a proper lower semicontinuous convex function.  
	Then there exists a rectangle rich family $\mathcal{R} \subseteq \mathcal{S}_{\rectangle{}}(X \times X^\ast)$ such that, for every $V \times Y \in \mathcal{R}$, one has
	\begin{align}\label{EqProp3.1}
		(f|_V)^\ast(x^\ast|_{V}) = f^\ast(x^\ast), 
		\qquad \text{for all } x^\ast \in Y.
	\end{align}
\end{proposition}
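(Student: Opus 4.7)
The plan is to construct $\mathcal{R}$ by working with the \emph{truncated} Fenchel conjugates
\[
    f_M^\ast(y^\ast) := \sup_{x\in X,\,\|x\|\leq M}\bigl(\langle y^\ast,x\rangle - f(x)\bigr), \qquad M\in\N,
\]
and their localized versions $(f|_V)_M^\ast(y^\ast|_V):=\sup_{x\in V,\,\|x\|\leq M}(\langle y^\ast,x\rangle-f(x))$, rather than with $f^\ast$ directly. The crucial feature is that $f_M^\ast$, being a supremum of affine functionals whose linear parts have dual norm at most $M$, is $M$-Lipschitz on $(X^\ast,\|\cdot\|_\ast)$; moreover $f_M^\ast\nearrow f^\ast$ and $(f|_V)_M^\ast\nearrow(f|_V)^\ast$ pointwise as $M\to\infty$. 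Consequently, it will suffice to produce a rich rectangle family satisfying the strengthened identity
\[
    (f|_V)_M^\ast(y^\ast|_V) = f_M^\ast(y^\ast)\quad \text{for all } y^\ast\in Y \text{ and all } M\in\N,
\]
since \eqref{EqProp3.1} then follows by taking the supremum over $M$. I will therefore define $\mathcal{R}$ as the family of all rectangles in $\mathcal{S}_{\rectangle{}}(X\times X^\ast)$ satisfying this truncated identity.

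To verify the enlargement axiom, I will take a separable rectangle $V_0\times Y_0$, fix a countable dense subset $\{y_j^\ast\}_{j\in\N}$ of $Y_0$, and, for each triple $(j,M,k)\in\N^3$ with $f_M^\ast(y_j^\ast)\in\R$, pick a bounded witness $x_{j,M,k}\in X$ with $\|x_{j,M,k}\|\leq M$ and $\langle y_j^\ast,x_{j,M,k}\rangle - f(x_{j,M,k}) \geq f_M^\ast(y_j^\ast) - 1/k$. Setting $V:=\operatorname{\overline{sp}}(V_0\cup\{x_{j,M,k}\}_{j,M,k})$ (still separable) and $Y:=Y_0$ makes the truncated identity hold tautologically on the dense set $\{y_j^\ast\}$. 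To extend it to an arbitrary $y^\ast\in Y_0$, I will choose $y_{j_n}^\ast\to y^\ast$ in the dense set and invoke the estimate
\[
    (f|_V)_M^\ast(y^\ast|_V)\geq \langle y^\ast,x_{j_n,M,k}\rangle - f(x_{j_n,M,k}) \geq f_M^\ast(y_{j_n}^\ast) - \tfrac{1}{k} - M\|y^\ast - y_{j_n}^\ast\|;
\]
using the Lipschitz continuity of $f_M^\ast$ (so $f_M^\ast(y_{j_n}^\ast) \to f_M^\ast(y^\ast)$) and $\|y^\ast - y_{j_n}^\ast\|\to 0$, followed by $k\to\infty$, yields the desired reverse of the trivial inequality $(f|_V)_M^\ast(y^\ast|_V)\leq f_M^\ast(y^\ast)$. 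The closure-under-countable-unions axiom I will handle by exactly the same device: given an increasing chain $V_n\times Y_n\in\mathcal{R}$, set $V:=\operatorname{\overline{sp}}(\bigcup_n V_n)$ and $Y:=\operatorname{\overline{sp}}(\bigcup_n Y_n)$, take $y_m^\ast\in Y_{n_m}$ converging to an arbitrary $y^\ast\in Y$, and use the truncated identity on $(V_{n_m},Y_{n_m})$ to obtain bounded witnesses $x_m\in V_{n_m}\subseteq V$ with $\|x_m\|\leq M$ to which the same estimate applies.

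The key obstacle addressed by this truncation trick is that approximate maximizers of $\langle y^\ast,\cdot\rangle - f(\cdot)$ may be unbounded in $X$, so that a direct attack on the untruncated identity via density alone will fail: the perturbation term $\|y^\ast - y_n^\ast\|\cdot\|x_n\|$ in the density estimate cannot be controlled without a uniform bound on $\|x_n\|$, and $f^\ast$ itself is only lower semicontinuous rather than Lipschitz. Replacing $f^\ast$ by the Lipschitz family $\{f_M^\ast\}_{M\in\N}$ decouples the $y^\ast$-approximation from the witness norm, and monotone convergence in $M$ then transfers the equality back to $f^\ast$ and $(f|_V)^\ast$.
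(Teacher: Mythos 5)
Your proposal is correct, but it follows a genuinely different route from the paper's. The paper first treats the coercive case $f(x)\geq\alpha\|x\|^2+\langle\bar x^\ast,x\rangle+\beta$, where Lemma~\ref{cont_conj} makes $f^\ast$ and $(f|_V)^\ast$ norm-continuous, so that the identity, once enforced on a countable dense subset $C_0\subseteq Y_0$ by adjoining countably many witnesses $S_0$ to $V_0$, extends by continuity; the general case is then reached by applying this to $f+\tfrac{\lambda}{2}\|\cdot\|^2$ for $\lambda=1/n$, intersecting the countably many resulting rich families, and letting the Moreau envelopes $(f^\ast)_\lambda\nearrow f^\ast$ (Lemma~\ref{ConvMoreau}) recover the identity for $f$ itself. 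You replace the regularization-plus-Moreau-envelope device by the ball-truncated conjugates $f_M^\ast$, which are $M$-Lipschitz (when finite) and increase to $f^\ast$, and you build one rich family enforcing all truncated identities at once; the uniform bound $\|x_{j,M,k}\|\leq M$ on the witnesses is exactly what decouples the approximation in $y^\ast$ from the witness norms, and monotone convergence in $M$ plays the role of the paper's limit in $\lambda$. The core mechanism (countably many witnesses plus a density/uniform-continuity extension, both for cofinality and for $\sigma$-completeness) is the same, but your version is more self-contained: it avoids Lemma~\ref{cont_conj}, the conjugation formula $(f+\tfrac{\lambda}{2}\|\cdot\|^2)^\ast=(f^\ast)_\lambda$, and the identification of the Moreau envelope computed in $V^\ast$ with the restriction of the one computed in $X^\ast$, which in the paper's argument implicitly uses that the dual norm restricts correctly to $V^\ast$. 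Two small points you should make explicit: (i) for fixed $M$, either $\operatorname{dom} f$ misses the ball of radius $M$ and $f_M^\ast\equiv-\infty$ (so the truncated identity is trivial), or $f_M^\ast$ is finite everywhere and $M$-Lipschitz; this dichotomy is what guarantees that witnesses exist for the approximating functionals $y_{j_n}^\ast$ whenever $f_M^\ast(y^\ast)\in\R$, and that the Lipschitz extension step is legitimate; (ii) cofinality must be verified against arbitrary members of $\mathcal{S}(X\times X^\ast)$, so, as in the paper, first enclose a given separable set in a rectangle $V_0\times Y_0$ before enlarging $V_0$.
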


\begin{proof}
First, observe that by the definition of the conjugate the following inequality holds whenever $V \subset W$ and $x^\ast \in X^\ast$:
\begin{align*}
  (f|_V)^\ast(x^\ast|_{V}) \leq (f|_W)^\ast(x^\ast|_{W}).
\end{align*}

Now, assume that there exist $\alpha > 0$, $\beta \in \R$, and $\bar{x}^{\ast} \in X^{\ast}$ such that 
\begin{equation} \label{coercive_case}
    f(x) \geq \alpha \|x\|^2 + \langle \bar{x}^{\ast}, x \rangle + \beta, 
    \qquad \forall x \in X.
\end{equation}

From Lemma~\ref{cont_conj}, this implies that $\dual{(f|_{V})}$ is continuous for every $V \subseteq X$ such that $V \cap \dom f \neq \emptyset$.

Consider
\[
    \mathcal{R} := \bigl\{ V \times Y \in \mathcal{S}_{\rectangle{}}(X \times \dual{X}) : 
    (f|_{V})^\ast (x^\ast|_{V}) = f^\ast(x^\ast) \;\; \forall x^\ast \in Y \bigr\}.
\]

We shall prove that $\mathcal{R}$ is cofinal.  
Fix any $Z \in \mathcal{S}(X \times \dual{X})$. Since $\mathcal{S}_{\rectangle{}}(X \times \dual{X})$ is rich, there exists $V_{0} \times Y_{0} \in \mathcal{S}_{\rectangle{}}(X \times \dual{X})$ such that $Z \subseteq V_{0} \times Y_{0}$.  
Let $C_0$ be a countable dense subset of $Y_0$, and let $S_0 \subseteq X$ be a countable set such that, for every $\dual{x} \in C_{0}$,
\[
    f^\ast(\dual{x}) = \sup_{x \in S_0} \bigl\{ \langle \dual{x}, x \rangle - f(x) \bigr\}.
\]
Define 
\[
    V := \overline{\operatorname{sp}}(V_0 \cup S_0), 
    \qquad Y := Y_0.
\]

Clearly, $V \times Y \in \mathcal{S}_{\rectangle{}}(X \times \dual{X})$.  
We now show that $V \times Y \in \mathcal{R}$. Fix $\dual{x} \in Y$, and let $(\dual{x}_{n}) \subset C_0$ be a sequence converging to $\dual{x}$. By construction, for all $n \in \N$ we have
\[
    (f|_{V})^\ast (\dual{x}_n|_{V}) = f^\ast(\dual{x}_n).
\]
Using the continuity of $f^\ast$ and $(f|_{V})^\ast$, we conclude that
\[
    (f|_{V})^\ast (\dual{x}|_{V}) = f^\ast(\dual{x}).
\]

Thus $\mathcal{R}$ is nonempty and cofinal.  

Next, we prove that $\mathcal{R}$ is $\sigma$-complete. Let $(V_{n} \times Y_n)_{n \in \N}$ be an increasing sequence in $\mathcal{R}$. We must verify that
\[
    \overline{\bigcup_{n \in \N} V_{n} \times Y_n} \in \mathcal{R}.
\]
Since $(V_{n} \times Y_n)_{n\in\N}$ is increasing, it follows that
\[
    \overline{\bigcup_{n \in \N} V_{n} \times Y_n}= \overline{\bigcup_{n\in\N}V_n}\times\overline{\bigcup_{n\in\N}Y_n}.
\]
 Denote $V:= \overline{\bigcup_{n\in\N}V_n}$ and $Y:=\overline{\bigcup_{n\in\N}Y_n}$. As $\mathcal{S}_{\rectangle{}}(X \times \dual{X})$ is rich, we have $V \times Y \in \mathcal{S}_{\rectangle{}}(X \times \dual{X})$.  
Now, let $\dual{x} \in Y$ and choose a sequence $(\dual{x}_n)_{n \in \N} \subset \bigcup_{n \in \N} Y_n$ converging to $\dual{x}$. Then there exists a subsequence $(Y_{n_k})_{k\in\N}$ such that 
\[
  \dual{x}_{k} \in Y_{n_k} \text{ and } (f|_V)^\ast(\dual{x}_{k}|_{V}) 
   = (f|_{V_{n_k}})^\ast(\dual{x}_{k}|_{V_{n_k}}) 
   = f^\ast (\dual{x}_{k}), 
   \qquad \text{ for all } k \in \N.
\]
By continuity of the involved functions, we obtain
\[
    (f|_V)^\ast(\dual{x}|_{V}) = f^\ast(\dual{x}),
\]
which shows that $V \times Y \in \mathcal{R}$. Therefore, $\mathcal{R}$ is rich. 

In the general case where \eqref{coercive_case} does not necessarily hold, define for each $\lambda > 0$ the function
\[
    f^{\lambda}(x) := f(x) + \tfrac{\lambda}{2} \|x\|^{2}, 
\]
which satisfies \eqref{coercive_case}.  It follows that, for every $\lambda > 0$, the family
\[
    \mathcal{R}_\lambda := \bigl\{ V \times Y \in \mathcal{S}_{\rectangle{}}(X \times \dual{X}) : 
    (f^\lambda|_{V})^\ast (x^\ast|_{V}) = (f^\lambda)^\ast(x^\ast) \;\; \forall x^\ast \in Y \bigr\}
\]
is rich. Using the known identity $(f^\lambda|_{V})^\ast = ((f|_{V})^\ast)_{\lambda}$ (the Moreau envelope in $V^\ast$, we obtain
\[
    \mathcal{R}_\lambda = \bigl\{ V \times Y \in \mathcal{S}_{\rectangle{}}(X \times \dual{X}) : 
    ((f|_{V})^\ast)_{\lambda}(x^\ast|_{V}) = (f^\ast)_{\lambda}(x^\ast) \;\; \forall x^\ast \in Y \bigr\}.
\]

Finally, set
\[
    \mathcal{R} := \bigcap_{n \in \N} \mathcal{R}_{\frac{1}{n}}.
\]
This family is rich. Since $(\dual{(f|_{V})})_{\lambda} \nearrow \dual{(f|_{V})}$ pointwise as $\lambda \to 0$ (see Lemma~\ref{ConvMoreau}), it follows that $\mathcal{R}$ is the desired family.

\end{proof}
The next proposition establishes that, when the space $X$ is weakly compactly generated,  
the equality given in \eqref{EqProp3.1} extends to the weak$^\ast$-closure of the space $Y$.  
Formally, we state the following result.

\begin{proposition} \label{familia_rica_2}
	Let $X$ be a WCG Banach space and let $f\colon X \to \R \cup \{\infty\}$ be a  proper lower semicontinuous convex function.  
	Then there exists a rectangle rich family $\mathcal{R} \subseteq \mathcal{S}_{\rectangle{}}(X \times X^\ast)$ such that, for every $V \times Y \in \mathcal{R}$, one has
	\begin{align}\label{igualdad_conjugada_restringida}
		(f|_{V})^\ast(x^\ast|_{V}) = f^\ast(x^\ast),
		\qquad \text{for all } x^\ast \in \overline{Y}^{\dual{w}}.
	\end{align}
\end{proposition}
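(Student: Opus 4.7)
The plan is to combine Proposition~\ref{3.1}, applied not only to $f$ but also to each perturbation $f+\delta_{nA}$ for $n\in\N$, with the rectangle rich family $\Gamma$ of Theorem~\ref{familia_rica_wcg}; here $A$ denotes the weakly compact, convex, symmetric generator of $X$ coming from the WCG hypothesis. After a translation we may assume $0\in\inte\dom f$, so $f$ is continuous at $0$ and every $f+\delta_{nA}$ is proper, lsc, and convex. Applying Proposition~\ref{3.1} to $f+\delta_{nA}$ yields a rich family $\mathcal{R}_1^{(n)}$ for each $n$, and by Proposition~\ref{rich families} the countable intersection
\[
\mathcal{R}:=\Gamma\cap\bigcap_{n\in\N}\mathcal{R}_1^{(n)}
\]
is again rich.

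\textbf{Mackey-continuous approximants.} Fix $V\times Y\in\mathcal{R}$ and let $P$ denote the associated $1$-Lipschitz projection supplied by Theorem~\ref{familia_rica_wcg}. Since $P(A)\subseteq A$ and $P(X)=V$, the set $A\cap V$ is weakly compact, convex, symmetric in $V$, contains $0$, and has norm-dense linear span in $V$ (as $P(A)\subseteq A\cap V$ and the linear span of $P(A)$ is dense in $V$); moreover $0\in\inte\dom(f|_V)$. Therefore Lemma~\ref{aproximacion} applies in both $X$ and $V$, producing Mackey-continuous functions
\[
g_n:= f^{\ast}\square\sigma_{nA}\;\nearrow\; f^{\ast}\text{ on }X^{\ast},\qquad h_n^V:=(f|_V)^{\ast}\square\sigma_{n(A\cap V)}\;\nearrow\; (f|_V)^{\ast}\text{ on }V^{\ast}.
\]
Because $g_n$ and $h_n^V$ are Mackey-continuous and convex, hence $w^\ast$-lsc, biconjugation (together with $\sigma_{nA}^{\ast}=\delta_{nA}$, $\sigma_{n(A\cap V)}^{\ast}=\delta_{nA\cap V}$ and $nA\cap V=n(A\cap V)$) identifies them respectively with $(f+\delta_{nA})^{\ast}$ and $((f+\delta_{nA})|_V)^{\ast}$. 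The membership $V\times Y\in\mathcal{R}_1^{(n)}$ then gives
\[
g_n(y^{\ast})=h_n^V(y^{\ast}|_V)\qquad\text{for all }y^{\ast}\in Y.
\]

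\textbf{Extension to $\overline{Y}^{w^{\ast}}$.} The restriction map $X^{\ast}\to V^{\ast}$, $x^{\ast}\mapsto x^{\ast}|_V$, is continuous from $\mu(X^{\ast},X)$ to $\mu(V^{\ast},V)$, because every weakly compact convex symmetric subset of $V$ is also one of $X$; hence $x^{\ast}\mapsto h_n^V(x^{\ast}|_V)$ is Mackey-continuous on $X^{\ast}$, as is $g_n$. By the Mackey--Arens theorem the topologies $\mu(X^{\ast},X)$ and $w^{\ast}$ share the same continuous linear functionals, so by Hahn--Banach they share the same closed convex sets; in particular the Mackey closure of the subspace $Y$ coincides with $\overline{Y}^{w^{\ast}}$, so $Y$ is Mackey-dense in $\overline{Y}^{w^{\ast}}$. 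Therefore the equality $g_n(\cdot)=h_n^V(\cdot|_V)$ extends from $Y$ to $\overline{Y}^{w^{\ast}}$. Sending $n\to\infty$ and using the pointwise monotone convergences $g_n\nearrow f^{\ast}$ and $h_n^V\nearrow (f|_V)^{\ast}$ produces the identity \eqref{igualdad_conjugada_restringida}.

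\textbf{Main obstacle.} The delicate step is bridging the gap between $Y$ and its $w^{\ast}$-closure despite the absence of weak-star continuity of $f^{\ast}$. This is resolved by combining the Mackey-continuous approximants $g_n$ and $h_n^V$ provided by Lemma~\ref{aproximacion} with the Mackey-density of $Y$ in $\overline{Y}^{w^{\ast}}$. The WCG structure of Theorem~\ref{familia_rica_wcg} enters precisely through the invariance $P(A)\subseteq A$, which guarantees that $A\cap V$ is a bona fide weakly compact generator of the separable subspace $V$, so that Lemma~\ref{aproximacion} can be faithfully applied to $f|_V$ in $V$.
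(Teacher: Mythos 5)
Your argument for the case in which $f$ is continuous at the origin is essentially the paper's own proof: the paper also applies Proposition~\ref{3.1} to the perturbations $f+\delta_{[0,n]A}$, passes from $Y$ to $\overline{Y}^{\dual{w}}$ along a Mackey-convergent net (Mackey--Arens), exploits the $\sigma_{[0,n]A}$-Lipschitz behaviour (equivalently, Mackey continuity) of the conjugates to transport the equality to the $w^\ast$-closure, and finishes with Lemma~\ref{aproximacion}. Your explicit verification that $P(A)\subseteq A$ and $P(X)=V$ make $A\cap V$ a weakly compact, convex, symmetric set with dense span in $V$, so that Lemma~\ref{aproximacion} is also applicable inside $V$, is a useful clarification of a point the paper leaves implicit (alternatively one can avoid it by sandwiching, since $(g_n|_V)^\ast(x^\ast|_V)\leq (f|_V)^\ast(x^\ast|_V)\leq f^\ast(x^\ast)$ while the left-hand side increases to $f^\ast(x^\ast)$).

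There is, however, a genuine gap at the very first step: ``after a translation we may assume $0\in\inte\dom f$.'' For a proper lower semicontinuous convex function on a Banach space, $\inte\dom f$ may well be empty (take $f=\delta_C$ with $C$ a weakly compact convex set having empty norm interior, or $f$ finite only on a non-closed dense subspace), so no translation produces continuity at $0$. This hypothesis is not a harmless normalization: it is exactly what makes $(f+\delta_{nA})^\ast\nearrow f^\ast$ work. Indeed, if $\dom f\cap\operatorname{sp}(A)=\emptyset$ --- which can happen, since $\operatorname{sp}(A)=\bigcup_n nA$ is in general only a dense proper subspace --- then every $f+\delta_{nA}$ fails to be proper and $\sup_n(f+\delta_{nA})^\ast\equiv-\infty\neq f^\ast$, so your approximation scheme collapses. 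Consequently what you prove is only the special case of Proposition~\ref{familia_rica_2} in which $f$ is continuous somewhere. The paper's proof devotes its entire second half to removing that assumption: it applies the continuous case to the Moreau envelope $\hat f=f_\lambda$ (continuous by Lemma~\ref{ConvMoreau}), uses $(f_\lambda)^\ast=f^\ast+\tfrac{\lambda}{2}\norm{\cdot}_\ast^2$, intersects with a rich family associated with $\tfrac12\norm{\cdot}^2$ so that $\norm{x^\ast|_V}_{V^\ast}=\norm{x^\ast}_\ast$ for all $x^\ast\in\overline{Y}^{\dual{w}}$, and then lets $\lambda\downarrow0$; nothing in your proposal plays this role. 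A smaller point in the same direction: even when $\inte\dom f\neq\emptyset$, the translation must be made compatible with the restrictions, i.e.\ one should further refine the family so that the translation vector belongs to $V$, since otherwise $\bigl(f(\cdot+x_0)\bigr)|_V$ is not a translate of $f|_V$ and the conclusion does not transfer back to $f$.
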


\begin{proof}
First, assume that $f$ is continuous at the origin.

Since $X$ is WCG, we may consider a weakly compact, convex, and symmetric set $A$ that is linearly dense in the norm topology. It then follows that  
\[
\overline{\bigcup_{n\in\N}[0,n]A} = X.
\]

Because $f$ is continuous at the origin, it is also continuous at a point of $\dom(\delta_{[0,n]A})$. Define  
\[
g_n := f + \delta_{[0,n]A},
\]  
which is a sequence of   proper lower semicontinuous convex functions. Let $\mathcal{R}_n$ be the rich family associated with $g_n$ as in Proposition~\ref{3.1}, and define the rich family  
\[
\mathcal{R}' := \bigcap_{n\in\N}\mathcal{R}_n.
\]  
We will prove that for every $V \times Y \in \mathcal{R}'$, the equality \eqref{igualdad_conjugada_restringida} holds.

Indeed, let $V \times Y \in \mathcal{R}$, and let $\dual{x} \in \overline{Y}^{\dual{w}}$. By the Mackey--Arens theorem (see, e.g., \cite[Lemma 19]{fabian2018wcg}), there exists a net $(\dual{x}_\alpha)\subseteq Y$ converging to $\dual{x}$ with respect to the Mackey topology, that is, $\dual{x}_\alpha \overset{\mu^\ast}{\to} \dual{x}$. Moreover, from \cite[Proposition 4.1.20]{Libro_Correa}, we have  
\[
\dual{g_n} = \dual{f}\, \square \,\sigma_{[0,n]A}.
\]  
Observe that  
\[
g_{n}|_{V} = f|_{V} + (\delta_{[0,n]A})|_{V} = f|_{V} + \delta_{V \cap [0,n]A},
\]  
so that  
\[
(g_n|_{V})^\ast = (f|_{V})^{\ast}\, \square \,\sigma_{V \cap [0,n]A}.
\]
Furthermore, it is easy to prove that for all $\dual{y}, \dual{z} \in \dual{X}$, and all $n \in \N$ we have  
\begin{equation}\label{lipschitz}
    |\dual{g}_n(\dual{y})- \dual{g}_n(\dual{z})|\leq\sigma_{[0,n]A}(\dual{y}-\dual{z}).
\end{equation}
Similarly,  
\begin{equation}\label{lipschitz_2}
    |\dual{(g_{n}|_{V})}(\dual{y}|_{V})- \dual{(g_{n}|_{V})}(\dual{z}|_{V})|
    \leq\sigma_{V\cap[0,n]A}(\dual{y}|_{V}-\dual{z}|_{V})
    \leq \sigma_{[0,n]A}(\dual{y}-\dual{z}).
\end{equation}

Since $\dual{x}_\alpha \overset{\mu^\ast}{\to} \dual{x}$ and $[0,n]A$ is weakly compact, convex, and symmetric, it follows from \eqref{lipschitz} and \eqref{lipschitz_2} that  
\[
\dual{(g_{n}|_{V})}(\dual{x_\alpha}) \to \dual{(g_{n}|_{V})}(\dual{x}) \quad\text{and}\quad
\dual{g}_n(\dual{x_\alpha})\to \dual{g}_n(\dual{x}).
\]

On the other hand, by the property of the rich family (see Proposition~\ref{3.1}),  
\[
\dual{(g_{n}|_{V})}(\dual{x_\alpha})=\dual{g}_n(\dual{x_\alpha}).
\]  
Hence, we conclude that  
\begin{equation}\label{igualdad_aproximaciones}
\dual{(g_n|_{V})}(\dual{x})= \dual{g}_n(\dual{x}),\text{ for all } n \in \N.
\end{equation}

Since $f$ is continuous at the origin, $f|_{V}$ is also continuous. Taking the limit in \eqref{igualdad_aproximaciones} as $n \to \infty$, and applying Lemma~\ref{aproximacion}, we obtain \eqref{igualdad_conjugada_restringida}.

Now, for the general case, let $\hat{f}$ denote the Moreau envelope of $f$, i.e., $\hat{f} := f_{\lambda}$ for some $\lambda > 0$. Then $\hat{f}$ is continuous. Let $\mathcal{R}_\lambda$ be the rich family associated with $\hat{f}$ as constructed above, and let $\mathcal{A}$ be the rich family associated with the function $h(x) := \tfrac{1}{2}\|x\|^2$. Define  
\[
\mathcal{R} := \mathcal{R}_\lambda \cap \mathcal{A}.
\]  

For every $V \times Y \in \mathcal{R}$, we have  
\begin{equation*}  
    (f|_{V})^\ast (x^\ast|_V) + \frac{\lambda}{2} \|x^\ast|_V\|_{V^\ast}^2 
    = f^\ast(x^\ast) + \frac{\lambda}{2} \|x^\ast\|^2, 
    \quad \forall x^\ast \in \overline{Y}^{w^\ast},
\end{equation*}
where $\|x^\ast|_V\|_{V^\ast}$ denotes the dual norm on $V^\ast$ induced by $\|\cdot\|$ on $V$. By the properties of $\mathcal{A}$, it follows that  
\[
\|x^\ast|_V\|_{V^\ast} = \|x^\ast\|_{\ast}.
\]
Thus, \eqref{igualdad_conjugada_restringida} holds.  

\end{proof}
Despite the fact that the previous result will be primarily used in the proof of Theorem~\ref{previo_a_attouch}, it is worth noting that it also leads to interesting insights into the structure of WCG Banach spaces. In particular, it provides a simple isometry between separable subspaces and the weak$^\ast$-closures of separable subspaces in the dual. This result is in the same spirit as \cite[Theorem~2.3]{rich_families_asplund}, but formulated for WCG spaces rather than Asplund spaces. Formally, we obtain the following result.

 { \color{red}
\begin{corollary}\label{surjective_isometry}
Let  $X$ be a WCG Banach space. Then there exists a rectangle rich family $\mathcal{R} \subseteq  \mathcal{S}_{\rectangle{}}(X\times X^\ast)$ such that for every $V\times Y \in \mathcal{R}$ and every $\dual{x}\in \dual{V}$, there exists  $\dual{y}\in\overline{Y}^{\dual{w}}$ satisfying $$\dual{y}|_{V} = \dual{x} \text{ and } \| y^\ast\|_{X^\ast}  = \| x^\ast \|_{\dual{V}}.$$
		In other words, for every $V\times Y\in\mathcal{R}$: $\overline{Y}^{\dual{w}} \ni \dual{y}\to \dual{y}|_{V}\in\dual{V}$ is a surjective isometry.
\end{corollary}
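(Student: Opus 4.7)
The plan is to combine the rich family constructed in Proposition~\ref{familia_rica_2} (applied to a concrete quadratic function) with the rich family arising from the projectional skeleton of Theorem~\ref{familia_rica_wcg}. The first family will supply the norm preservation, while the second will provide existence of extensions inside $\overline{Y}^{w^\ast}$; intersecting them gives the desired surjective isometry.

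First, I would apply Proposition~\ref{familia_rica_2} to the function $f(x) := \tfrac{1}{2}\|x\|^2$, which is proper, lower semicontinuous, convex and continuous at the origin. This yields a rectangle rich family $\mathcal{R}_1 \subseteq \mathcal{S}_{\rectangle{}}(X \times X^\ast)$ such that, for every $V \times Y \in \mathcal{R}_1$,
\[
(f|_V)^\ast(x^\ast|_V) = f^\ast(x^\ast) \quad \text{for all } x^\ast \in \overline{Y}^{w^\ast}.
\]
Since $f^\ast(\cdot) = \tfrac{1}{2}\|\cdot\|_\ast^2$ on $X^\ast$, and $(f|_V)^\ast(\cdot) = \tfrac{1}{2}\|\cdot\|_{V^\ast}^2$ on $V^\ast$, this identity rewrites as $\|x^\ast|_V\|_{V^\ast} = \|x^\ast\|_{X^\ast}$ for every $x^\ast \in \overline{Y}^{w^\ast}$. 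Hence, on $\mathcal{R}_1$, the restriction map $\overline{Y}^{w^\ast} \to V^\ast$ is already an isometric embedding; only surjectivity remains to be shown.

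Next, I would invoke Theorem~\ref{familia_rica_wcg} to obtain a commutative $1$-projectional skeleton $(P_\gamma)_{\gamma \in \Gamma}$ whose index set $\Gamma$ is itself a rich family in $\mathcal{S}_{\rectangle{}}(X \times X^\ast)$, and which satisfies, for every $\gamma = V \times Y \in \Gamma$, the decomposition $X = V \oplus Y^\perp$ with $P_\gamma$ projecting onto $V$, together with $P_\gamma^\ast(X^\ast) = \overline{Y}^{w^\ast}$. Setting $\mathcal{R} := \mathcal{R}_1 \cap \Gamma$, Proposition~\ref{rich families} guarantees that $\mathcal{R}$ is a rectangle rich family.

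Finally, I would establish surjectivity directly. Fix $V \times Y \in \mathcal{R}$ and $x^\ast \in V^\ast$. By the Hahn--Banach theorem there exists $\widetilde{x}^\ast \in X^\ast$ with $\widetilde{x}^\ast|_V = x^\ast$ and $\|\widetilde{x}^\ast\|_{X^\ast} = \|x^\ast\|_{V^\ast}$. Define $y^\ast := P_\gamma^\ast(\widetilde{x}^\ast)$. Because $P_\gamma v = v$ for every $v \in V$, one has $y^\ast(v) = \widetilde{x}^\ast(P_\gamma v) = \widetilde{x}^\ast(v) = x^\ast(v)$, so $y^\ast|_V = x^\ast$; and $y^\ast \in P_\gamma^\ast(X^\ast) = \overline{Y}^{w^\ast}$. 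Invoking the isometry obtained on $\mathcal{R}_1$, we conclude $\|y^\ast\|_{X^\ast} = \|y^\ast|_V\|_{V^\ast} = \|x^\ast\|_{V^\ast}$, which completes the proof. The only conceptual point is to notice that the two rich families carry complementary information—conjugate compatibility ensures norm preservation, while the adjoints of the skeleton produce the required extensions into $\overline{Y}^{w^\ast}$—and that it suffices to intersect them; no further technical obstacle is expected.
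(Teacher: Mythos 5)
Your proof is correct and follows essentially the same route as the paper: it intersects the rich family from Proposition~\ref{familia_rica_2} (you use $\tfrac12\|\cdot\|^2$, the paper uses $\delta_{\mathbb{B}}$, whose conjugates both yield the dual-norm identity on $\overline{Y}^{w^\ast}$) with the skeleton family of Theorem~\ref{familia_rica_wcg}, and obtains surjectivity via the adjoint projection, since $P_\gamma^\ast(\widetilde{x}^\ast)$ coincides with the paper's $\tilde{P}_\gamma^\ast(x^\ast)$. The Hahn--Banach norm preservation you invoke is redundant (any extension works, as the norm equality already comes from the conjugate identity), but harmless.
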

}
\begin{proof}
 {\color{red}
Let $\mathcal{R}_1$ be the rich family obtained by applying 
Proposition~\ref{familia_rica_2} to the function $\delta_\mathbb{B}$, 
where $\mathbb{B}:=\{x \in X : \norm{x} \leq 1\}$. 
Let $\mathcal{R}_2$ be the rich family from Theorem~\ref{familia_rica_wcg}, 
and recall that for $\gamma := V \times Y \in \mathcal{R}_2$ we have 
$\dual{P}_\gamma(\dual X) = \overline{Y}^{\dual{w}}$.  
Now set $\mathcal{R} := \mathcal{R}_1 \cap \mathcal{R}_2$ and consider 
$\gamma := V \times Y \in \mathcal{R}$.

Furthermore, let us introduce the following notation to view the projection 
$P_\gamma$ as a mapping from $X$ to $V$:  
\begin{equation}\label{NOtation2}
    \tilde{P}_\gamma \colon X \to V, 
    \qquad \tilde{P}_\gamma(x) := P_\gamma(x).
\end{equation}
In this terminology, the adjoint operator of $\tilde{P}_\gamma$ is the mapping 
$\tilde{P}_\gamma^\ast \colon V^\ast \to X^\ast$. 
With this notation, we obtain
\begin{equation*}
    \dual{\tilde{P}}_\gamma(\dual{V}) 
    = \dual{P}_\gamma(\dual{X}).
\end{equation*}

Therefore, for every $x^\ast \in V^\ast$ there exists 
$y^\ast := \tilde{P}_\gamma^\ast(x^\ast)$ such that 
$\dual{y}|_{V} = \dual{x}$. 
This shows that the mapping  
\[
    \overline{Y}^{\dual{w}} \ni \dual{y} \;\longmapsto\; 
    \dual{y}|_{V} \in \dual{V}
\]
is surjective. 

Finally, since $\mathcal{R} \subseteq \mathcal{R}_1$, we have for every 
$\dual{y} \in \overline{Y}^{\dual{w}}$ that  
\[
    \norm{\dual{y}}_{\dual{X}} 
    = \norm{\dual{y}|_{V}}_{\dual{V}}.
\]  
Hence, the mapping is an isometry, which completes the proof.

}
\end{proof}

The next proposition shows that the equicoercivity property \eqref{equicoercividad} is separably reducible. 
{\color{red} \begin{proposition}\label{Rich_Family_First:Theo}
    Let $X$ be a WCG Banach space. Let $f_n\colon X\to \R\cup\{\infty\}$, $n\in\N$, be a sequence of proper lower semicontinuous convex functions satisfying  the equicoercivity property \eqref{equicoercividad}. Then there exists a rectangle rich family $\mathcal{R} \subseteq  \mathcal{S}_{\rectangle{}} (X\times X^\ast)$ such that for every $V\times Y \in \mathcal{R}$ and every sequence $ (\dual{v}_n)_{n\in\N} \subseteq V^\ast$ 
\begin{align*}
   \sup_{ n \in \mathbb{N}} (f_{n}|_{V})^\ast (\dual{v}_n) <\infty  \Rightarrow (\dual{v}_n)_{n\in\N} \text{ is bounded }.
\end{align*}
    
\end{proposition}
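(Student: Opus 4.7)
The plan is to construct $\mathcal{R}$ as the intersection of two rich families that together allow one to lift boundedness of the restricted conjugates back to boundedness in $X^\ast$, where the equicoercivity property \eqref{equicoercividad} then applies. First, for each $n \in \N$, apply Proposition~\ref{familia_rica_2} to $f_n$ to obtain a rich family $\mathcal{R}_n \subseteq \mathcal{S}_{\rectangle{}}(X \times X^\ast)$ such that for every $V \times Y \in \mathcal{R}_n$ and every $x^\ast \in \overline{Y}^{w^\ast}$,
\[
(f_n|_V)^\ast(x^\ast|_V) = f_n^\ast(x^\ast).
\]
By Proposition~\ref{rich families}, the countable intersection $\mathcal{R}^{(1)} := \bigcap_{n\in\N} \mathcal{R}_n$ is again rich.

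Next, invoke Corollary~\ref{surjective_isometry} to obtain a rich family $\mathcal{R}^{(2)} \subseteq \mathcal{S}_{\rectangle{}}(X \times X^\ast)$ such that for every $V \times Y \in \mathcal{R}^{(2)}$, the restriction map $\overline{Y}^{w^\ast} \ni y^\ast \mapsto y^\ast|_V \in V^\ast$ is a surjective isometry. Define $\mathcal{R} := \mathcal{R}^{(1)} \cap \mathcal{R}^{(2)}$, which is rich by Proposition~\ref{rich families}.

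Now fix $V \times Y \in \mathcal{R}$ and a sequence $(v_n^\ast)_{n\in\N} \subseteq V^\ast$ with $\sup_n (f_n|_V)^\ast(v_n^\ast) < \infty$. By the surjective isometry property, for each $n$ we can select $\tilde{v}_n^\ast \in \overline{Y}^{w^\ast}$ with $\tilde{v}_n^\ast|_V = v_n^\ast$ and $\|\tilde{v}_n^\ast\|_\ast = \|v_n^\ast\|_{V^\ast}$. The defining property of $\mathcal{R}^{(1)}$ yields
\[
f_n^\ast(\tilde{v}_n^\ast) = (f_n|_V)^\ast(\tilde{v}_n^\ast|_V) = (f_n|_V)^\ast(v_n^\ast),
\]
so $\sup_n f_n^\ast(\tilde{v}_n^\ast) < \infty$. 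Applying the equicoercivity property \eqref{equicoercividad} in $X^\ast$, the sequence $(\tilde{v}_n^\ast)_{n\in\N}$ is norm bounded in $X^\ast$, and the isometry then forces $(v_n^\ast)_{n\in\N}$ to be bounded in $V^\ast$.

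The construction is essentially a bookkeeping argument and no major technical obstacle arises, since the two needed tools are already established. The only subtle point is guaranteeing that the extension $\tilde{v}_n^\ast$ of $v_n^\ast$ is simultaneously norm-preserving and compatible with the conjugate identity; this is precisely why the intersection of the rich family from Proposition~\ref{familia_rica_2} (applied separately to each $f_n$) and the one from Corollary~\ref{surjective_isometry} is required, rather than either family alone.
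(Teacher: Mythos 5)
Your proposal is correct and follows essentially the same route as the paper: intersect the rich families from Proposition~\ref{familia_rica_2} (one for each $f_n$) with the one from Corollary~\ref{surjective_isometry}, lift each $v_n^\ast$ to a norm-preserving extension in $\overline{Y}^{w^\ast}$, transfer the bound on the restricted conjugates via the conjugate identity, and invoke equicoercivity \eqref{equicoercividad} together with the isometry. No discrepancy with the paper's argument.
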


\begin{proof}
   For each $n\in\N$, let $\mathcal{R}_n$ be the rich family from Proposition~\ref{familia_rica_2} associated with $f_n$, and let $\mathcal{F}$ be the rich family from Corollary~\ref{surjective_isometry}. Define  
\begin{align*}
    \mathcal{R}:= \mathcal{F} \cap \bigcap_{n\in\N}\mathcal{R}_n,
\end{align*}  
which is itself a rich family by Proposition~\ref{rich families}.   Let $\gamma := V\times Y \in \mathcal{R}$ and $(\dual{v}_n) \subseteq V^\ast$ with  
\[
\sup_{n\in\N} (f_{n}|_{V})^\ast(\dual{v}_n) < \infty.
\]  

On one hand, by the property of $\mathcal{F}$, there exists $y^\ast_n \in Y$ such that $y^\ast_n|_{V} = v^\ast_n$ and $\| v^\ast_n \| = \| y^\ast_n \|$. On the other hand, by the properties of each $\mathcal{R}_n$, we have  
\[
\dual{f_n}(\dual{y}_n) = \dual{(f_{n}|_{V})}(\dual{y}_n|_{V}) = \dual{(f_{n}|_{V})}(\dual{v}_n).
\]  

Therefore,  
\[
\sup_{n\in\N} f_n^\ast(y^\ast_n) < \infty.
\]  
By the equicoercivity property \eqref{equicoercividad}, the sequence $(y_n^\ast)_{n\in\N}$ is bounded, and consequently $(v_n^\ast)_{n\in\N}$ is also bounded. This concludes the proof.
\end{proof}

}
The final result of this section provides an intermediate step between Theorem~\ref{previo_a_attouch} and its generalization.  
In particular,  
it shows that there is a sufficiently large family of separable subspaces where the conclusion of Theorem~\ref{previo_a_attouch} holds.

      \begin{proposition} \label{Attouch-In_rich_f}
      let $X$ be a WCG Banach space, and let $f_n\colon X\to \R\cup\{\infty\}$ be a sequence of 
      proper lower semicontinuous convex functions such that the sequence $\dual{f}_n$ satisfies \eqref{equicoercividad} and such that $\dom(\els f_n)\neq\emptyset$. Then there is a rectangle rich family $\mathcal{R} \subseteq  \mathcal{S}_{\rectangle{}}(X\times X^\ast)$ such that for every $V\times Y \in \mathcal{R}$ the following holds
    \begin{equation*}
    	(\eli(f_{n}|_{V}))^{*}(\dual{v}) = \wels (f_{n}|_{V})^{*}(\dual{v}), \text{ for all } v^\ast \in V^\ast. 
    	\end{equation*}  
\end{proposition}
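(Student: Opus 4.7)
The strategy is to reduce the claim to the separable case of Theorem~\ref{previo_a_attouch} already established in Subsection~\ref{Proof_previo_a_attouch_Separable}. I would construct a rich family $\mathcal{R}$ so that whenever $V \times Y \in \mathcal{R}$, the restricted sequence $(f_n|_V)_{n\in\N}$ satisfies all hypotheses of Theorem~\ref{previo_a_attouch} within the separable Banach space $V$, and then invoke that theorem directly inside $V$ to conclude the claimed equality on $V^\ast$.

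To build $\mathcal{R}$, I would first apply Proposition~\ref{Fam_unif_propia} to obtain a rich family $\mathcal{R}_1$ such that $\dom(\els (f_n|_V)) \neq \emptyset$ for every $V \times Y \in \mathcal{R}_1$. Next, I would apply Proposition~\ref{Rich_Family_First:Theo} to obtain a rich family $\mathcal{R}_2$ ensuring that, for every $V \times Y \in \mathcal{R}_2$, the sequence of conjugates $((f_n|_V)^\ast)_{n\in\N}$ on $V^\ast$ inherits the equicoercivity property~\eqref{equicoercividad}. By Proposition~\ref{rich families}, the intersection $\mathcal{R} := \mathcal{R}_1 \cap \mathcal{R}_2$ is itself a rich family in $\mathcal{S}_{\rectangle{}}(X \times X^\ast)$.

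Fixing any $V \times Y \in \mathcal{R}$, the space $V$ is a separable Banach space; each $f_n|_V$ is convex and lower semicontinuous as the restriction of a convex lsc function to a closed subspace; and the two remaining hypotheses of Theorem~\ref{previo_a_attouch}---nonemptiness of $\dom(\els (f_n|_V))$ and equicoercivity of $((f_n|_V)^\ast)_{n\in\N}$ on $V^\ast$---are satisfied. A direct application of the separable case of Theorem~\ref{previo_a_attouch} would then yield
\begin{equation*}
(\eli(f_n|_V))^\ast(v^\ast) = \wels(f_n|_V)^\ast(v^\ast), \qquad \text{for every } v^\ast \in V^\ast,
\end{equation*}
as desired.

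The main subtlety I anticipate is the properness requirement on each $f_n|_V$: it is conceivable that $f_n|_V \equiv \infty$ for some indices, in which case $f_n|_V$ fails to be proper. However, the condition $\dom \els (f_n|_V) \neq \emptyset$ supplied by $\mathcal{R}_1$ produces a sequence $(a_n)_{n\in\N}$ in $V$ with $a_n \to a$ and $\limsup_n f_n(a_n) < \infty$, which forces $f_n|_V$ to be proper for all sufficiently large $n$. Since both $\eli$ and $\wels$ are insensitive to modifications of finitely many terms of a sequence, one can harmlessly redefine the finitely many problematic indices (for instance, replacing them by a fixed proper function on $V$) to obtain a proper sequence on $V$ without affecting either side of the identity, at which point Theorem~\ref{previo_a_attouch} applies as stated.
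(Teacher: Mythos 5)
Your proposal is correct and follows essentially the same route as the paper: one intersects the rich families from Proposition~\ref{Fam_unif_propia} and Proposition~\ref{Rich_Family_First:Theo} (the intersection being rich by Proposition~\ref{rich families}) and applies the separable case of Theorem~\ref{previo_a_attouch} on each $V$. Your additional observation about restoring properness of $f_n|_V$ for the finitely many possibly degenerate indices, which affects neither $\Gamma$-limit nor the equicoercivity, is a valid refinement of a point the paper leaves implicit.
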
 

\begin{proof}
    Let $\mathcal{A}_1$ be the rich family obtained in Proposition~\ref{Rich_Family_First:Theo}, and let $\mathcal{A}_2$ be the rich family from Proposition~\ref{Fam_unif_propia}. Set  
    \begin{equation*}
        \mathcal{R} := \mathcal{A}_{1}\cap \mathcal{A}_2.
    \end{equation*}
For every $V\times Y \in \mathcal{R}$, the hypotheses of Theorem~\ref{previo_a_attouch} are satisfied on $V$. Since we have already established the result for separable spaces, Theorem~\ref{previo_a_attouch} also holds for $V$.
\end{proof}

\subsection{Proof of Theorem \ref{previo_a_attouch} in the general case}\label{ProofMain_WCG}
We now have all the necessary ingredients to extend Theorem~\ref{previo_a_attouch} from the separable case to the general setting of WCG Banach spaces. 

\begin{proof}
Let $\mathcal{R}\subseteq \mathcal{S}_{\rectangle{}}(X\times X^\ast)$ be a rectangle rich family that simultaneously satisfies 
Theorem~\ref{familia_rica_wcg}, Proposition~\ref{familia_rica_2}, 
Corollary~\ref{surjective_isometry}, Proposition~\ref{Rich_Family_First:Theo}, 
and Proposition~\ref{Attouch-In_rich_f}.  

Denote $f := \eli f_n$ and fix $x^\ast \in X^\ast$. 
Let $\gamma := V\times Y \in \mathcal{R}$ be such that $x^\ast \in Y$. 
By Lemma~\ref{Lemma:inq_01conjugate}, we may assume that $f^\ast(x^\ast) < \infty$.  

We first observe that  
\[
  f|_{V} \;\leq\;  \eli f_{n}|_{V}.
\]
Consequently, 
\[
  f^\ast(x^\ast) 
  \;\geq\; (f|_{V})^\ast (x^\ast|_{V}) 
  \;\geq\; \bigl(\eli (f_{n}|_{V})\bigr)^{*}(x^\ast|_{V})
  \;=\; \wels (f_{n}|_{V})^{*}(x^\ast|_{V}),
\]
where the last equality follows from the properties of the rich family $\mathcal{R}$.  

Now fix $\varepsilon > 0$. By the definition of $\wels (f_{n}|_{V})^{*}$, 
there exists a sequence $(z_n^\ast) \subset V^\ast$ with 
$z_n^\ast \overset{\ast}{\rightharpoonup} x^\ast|_{V}$ such that  
\[
  \wels (f_{n}|_{V})^{*}(x^\ast|_{V}) 
  > \limsup_{n} (f_{n}|_{V})^{*}(z_n^\ast) - \varepsilon.
\]  

By Proposition~\ref{familia_rica_2} and Corollary~\ref{surjective_isometry}, 
there exist $x_n^\ast \in \overline{Y}^{\dual{w}}$ such that $x_n^\ast|_{V} = z_n^\ast$ and  
\[
  (f_{n}|_{V})^{*}(z_n^\ast) = f_{n}^{*}(x_n^\ast).
\]
Moreover, $\tilde{P}^\ast_\gamma(z_n^\ast) = x_n^\ast$ and 
$\tilde{P}^\ast_\gamma(x^\ast|_{V}) = x^\ast$, where $\tilde{P}_\gamma$ is defined in~\eqref{NOtation2}.  

We now show that $x_n^\ast \overset{\ast}{\rightharpoonup} x^\ast$. 
Indeed, for any $x \in X$,  
\begin{align*}
  \bigl|\langle x_n^\ast - x^\ast, x \rangle\bigr| 
  &= \bigl|\langle \dual{\tilde P}_{\gamma}(z_n^\ast) - \dual{\tilde P}_{\gamma}(x^\ast|_{V}), x \rangle\bigr| \\[0.5ex]
  &= \bigl|\langle z_n^\ast - x^\ast|_{V}, \tilde{P}_\gamma(x) \rangle\bigr|.
\end{align*}
Since $\tilde{P}_\gamma(x) = P_\gamma(x) \in V$ and 
$z_n^\ast \overset{\ast}{\rightharpoonup} x^\ast|_{V}$ in $V^\ast$, 
it follows that $\langle x_n^\ast - x^\ast, x \rangle \to 0$.  

Hence $x_n^\ast \overset{\ast}{\rightharpoonup} x^\ast$. Therefore,  
\[
  \limsup_{n} f_{n}^{*}(x_n^\ast) \;\geq\; \wels f_n^*(x^\ast).
\]  
Consequently,  
\[
  f^\ast(x^\ast) \;\geq\; \wels f_n^*(x^\ast) - \varepsilon.
\]  
Since $\varepsilon > 0$ was arbitrary, we deduce that  
\[
  f^\ast(x^\ast) \;\geq\; \wels f_n^*(x^\ast).
\]  
On the other hand, we always have 
$f^\ast(x^\ast) \leq \wels f_n^*(x^\ast)$ 
(see Lemma~\ref{Lemma:inq_01conjugate}). 
Combining both inequalities completes the proof.  

\end{proof}

\section{Proof of Corollary \ref{dualidad}}\label{Section:Proof_dualidad}
 
 \begin{proof}
Assume that $\welim \dual{f}_n = \dual{f}$. Then
\begin{equation*}
	\weli \dual{f}_n \;\geq\; \dual{f} \;\geq\; \wels \dual{f}_n.
\end{equation*}
Taking conjugates and using that $\dom(\wels \dual{f}_n) = \dom(\dual{f}) \neq \emptyset$, 
we can apply Theorem~\ref{teorema_2} to obtain
\begin{equation*}
	\els f_n \;\leq\; f \;\leq\; (\wels \dual{f}_n)^\ast.
\end{equation*}
Since it is always true that $\wels \dual{f}_n \geq (\eli f_n)^\ast$ 
(see Lemma~\ref{Lemma:inq_01conjugate}) and also $(\eli f_n)^{\ast\ast} \leq \eli f_n$, 
we conclude that
\begin{equation*}
	\els f_n \;\leq\; f \;\leq\; \eli f_n.
\end{equation*}
Therefore,
\begin{equation*}
	\dual{f} = \welim \dual{f}_n 
	\quad \Longrightarrow \quad 
	f = \elim f_n.
\end{equation*}

\medskip

Now we prove the reverse implication under the assumption that $X$ is a WCG Banach space and  $(\dual{f}_n)$ satisfies 
\eqref{equicoercividad}.  
Suppose that $f = \elim f_n$. Then
\begin{equation*}
	\els f_n \;\leq\; f \;\leq\; \eli f_n.
\end{equation*}
Taking conjugates and applying Theorem~\ref{previo_a_attouch} 
(since $\dom(\els f_n) = \dom(f) \neq \emptyset$), 
together with Theorem~\ref{teorema_2} 
(since $\dom(\wels \dual{f}_n) = \dom(\dual{f}) \neq \emptyset$), 
we obtain
\begin{equation*}
	(\weli \dual{f}_n)^{\ast\ast} \;\geq\; \dual{f} \;\geq\; \wels \dual{f}_n.
\end{equation*}

Since $(\weli \dual{f}_n)^{\ast\ast} \leq \weli \dual{f}_n$, we deduce that
\[
	\weli \dual{f}_n \;\geq\; \dual{f} \;\geq\; \wels \dual{f}_n,
\]
and hence $\dual{f} = \welim \dual{f}_n$. 
This completes the proof of the theorem. 
 \end{proof}

\section{Proof of Theorem \ref{Theo:equival}} \label{Section:Proof_Theo:equival}

In this section we prove Theorem~\ref{Theo:equival}.  
Before establishing the implications $(a) \Rightarrow (b) \Rightarrow (c)$, let us first observe that the equivalence of the three items, under the additional equicoercivity assumption \eqref{equicoercividad} for the sequence $(f_n^\ast)$, follows directly from Corollary~\ref{dualidad}, which already shows the equivalence between $(a)$ and $(c)$.  

Hence, in the remainder of this section we divide the proof of the implications into two subsections.

\subsection{Proof of Theorem \ref{Theo:equival}: $(a)\Longrightarrow (b)$}

\begin{proof} Let $(x,y^\ast)\in \partial f$, that is,
\[
    f(x)+\dual{f}(y^\ast)-\inner{y^\ast}{x}=0.
\]
By hypothesis, there exists a sequence $(z_n^\ast)_{n \in \N} \subseteq \dual{X}$ such that 
$z_n^\ast \overset{\ast}{\rightharpoonup} y^\ast$ and 
$\dual{f}_n(z_n^\ast)\to\dual{f}(y^\ast)$. 
By Corollary~\ref{dualidad}, there exists a sequence $(u_n)_{n\in \N}\subseteq X$ such that 
$u_n \to x$ and $f_n(u_n)\to f(x)$.  

Define
\[
    \varepsilon_n := f_n(u_n) + \dual{f}_n(z_n^\ast) - \inner{z_n^\ast}{u_n} > 0,
\]
which is a sequence of positive numbers converging to $0$.  

By the Brøndsted–Rockafellar  theorem (see, e.g., ~\cite{BronstedRockafellar1965} or \cite[Theorem 4.3.2]{Libro_borwein}), for every $n\in\N$ one can find 
$(x_n, y_n^\ast)\in \partial f_n$ such that
\[
    \norm{z_n^\ast - y_n^\ast} \leq \sqrt{\varepsilon_n}
    \quad\text{and}\quad 
    \norm{x_n - u_n} \leq \sqrt{\varepsilon_n}.
\]

It follows that $x_n \to x$ and $y_n^\ast \overset{\ast}{\rightharpoonup} y^\ast$. 
Since $(x_n,y_n^\ast)\in \partial f_n$, we conclude that
\[
    \partial f_n \xrightarrow{G(\|\cdot\|\times \dual{w})} \partial f.
\]

\end{proof}
\subsection{Proof of Theorem \ref{Theo:equival}: $(b)\Longrightarrow (c)$}

In order to prove that $(b)\Longrightarrow (c)$ in Theorem \ref{Theo:equival}, we will recall  the following integration Lemma given in \cite[Lemma 2.3]{Thibault}. For that purpose let us introduce the following notation.  For a function $\varphi\colon X \to \mathbb{R} \cup \{-\infty, \infty\}$   (not necessarily convex), the Fenchel subdifferential of $\varphi$ at a point $x \in X$ is defined as
\[
\partial^{\text{Fen}} \varphi(x) = \{x^* \in X^* : \langle x^*, u - x \rangle + \varphi(x) \leq \varphi(u), \, \forall u \in X\},
\]
if $x \in \operatorname{dom} \varphi := \{x \in X : \varphi(x) < \infty\}$. Otherwise, if $x \notin \operatorname{dom} \varphi$, we set:
\[
\partial^{\text{Fen}} \varphi(x) = \emptyset.
\]

\begin{lemma}\label{inclusion subdiferencial}
    Let $g\colon X \to \mathbb{R} \cup \{\infty\}$ be a lower semicontinuous function and $f: X \to \mathbb{R} \cup \{\infty\}$ be a proper lower semicontinuous convex function. If:
\[
\partial f \subseteq \partial^{\text{Fen}} g,
\]
then $f = g + c$ , for some $c\in\R$.

\end{lemma}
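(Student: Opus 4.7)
The plan is to integrate $f$ and $g$ by constructing the supremum of the affine minorants of $g$ provided by $\partial f$, identifying this supremum with $f$ up to a constant via maximal monotonicity together with the classical integration of convex subdifferentials, and then upgrading the inequality $\underline g \leq g$ to equality by density of $\dom \partial f$ in $\dom f$.

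First, by Br\o ndsted--Rockafellar, $\partial f$ is nonempty. For every $(y,y^*)\in \partial f$ the hypothesis gives $y^*\in\partial^{\text{Fen}} g(y)$, so the affine map $u\mapsto g(y)+\inner{y^*}{u-y}$ is a global minorant of $g$; in particular $g>-\infty$ on $X$. I would then define
\[
\underline g(x) := \sup\bigl\{\,g(y)+\inner{y^*}{x-y} \;:\; (y,y^*)\in\partial f\,\bigr\}, \quad x\in X.
\]
As a supremum of continuous affine functions, $\underline g$ is convex and lower semicontinuous, and satisfies $\underline g\leq g$ on $X$. Taking the term with $(y,y^*)$ in the definition shows $\underline g(y)=g(y)$ for every $y\in\dom\partial f$; in particular $\underline g$ is proper. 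Moreover, for any $(y,y^*)\in\partial f$ we have $\underline g(u)\geq g(y)+\inner{y^*}{u-y}=\underline g(y)+\inner{y^*}{u-y}$ for all $u\in X$, which yields $y^*\in\partial \underline g(y)$; hence $\partial f\subseteq \partial \underline g$.

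Second, I would identify $\underline g$ with $f$ up to a constant. Because $f$ is proper convex lsc, $\partial f$ is maximal monotone by Rockafellar's theorem, while $\partial \underline g$ is monotone since $\underline g$ is convex. The inclusion $\partial f\subseteq\partial \underline g$ between a maximal monotone operator and a monotone one forces the equality $\partial f=\partial \underline g$. The classical integration result for convex functions -- two proper convex lsc functions sharing the same subdifferential differ by a constant -- then provides $c\in\R$ with $\underline g = f + c$.

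Third, I would promote $\underline g\leq g$ to equality. For $x\in\dom f$, the Br\o ndsted--Rockafellar theorem produces a sequence $(x_n,x_n^*)\in\partial f$ with $x_n\to x$ and $f(x_n)\to f(x)$. Since $x_n\in\dom\partial f$, we have $g(x_n)=\underline g(x_n)=f(x_n)+c\to f(x)+c$, and by lower semicontinuity of $g$,
\[
g(x)\;\leq\;\liminf_n g(x_n)\;=\;f(x)+c\;=\;\underline g(x),
\]
so combined with $\underline g\leq g$ we obtain $g(x)=\underline g(x)$. For $x\notin\dom f$, $\underline g(x)=f(x)+c=\infty$ forces $g(x)=\infty$ as well. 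Thus $g=\underline g=f+c$ on $X$, which is the desired conclusion (after absorbing the sign of $c$).

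The main obstacle I anticipate is the value-convergence $f(x_n)\to f(x)$ in the approximation step. Norm approximation $x_n\to x$ with $x_n\in\dom\partial f$ is immediate from Br\o ndsted--Rockafellar, but simultaneous convergence of the values of $f$ requires its refined form: starting from an $\varepsilon$-subgradient $x^*_{\varepsilon}\in\partial_\varepsilon f(x)$ and letting $\varepsilon\downarrow 0$, Br\o ndsted--Rockafellar supplies $(x_\varepsilon,x_\varepsilon^*)\in\partial f$ with controlled $\|x_\varepsilon^*-x_\varepsilon^*\|$ and $\|x_\varepsilon-x\|$, so that the subgradient inequality $f(x_\varepsilon)\leq f(x)-\inner{x_\varepsilon^*}{x-x_\varepsilon}$ yields $\limsup f(x_\varepsilon)\leq f(x)$ and, together with lower semicontinuity of $f$, the required convergence.
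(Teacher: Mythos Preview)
The paper does not prove this lemma; it merely cites it as \cite[Lemma~2.3]{Thibault}.  Your argument is correct and is essentially the standard integration scheme one finds in the Thibault--Zagrodny line of results: form the convex lsc envelope $\underline g$ from the affine minorants of $g$ indexed by $\partial f$, deduce $\partial f\subseteq\partial\underline g$ and hence equality by maximal monotonicity of $\partial f$, invoke the classical Moreau--Rockafellar integration to get $\underline g=f+c$, and finally upgrade $\underline g\leq g$ to equality via norm-and-value density of $\dom\partial f$ in $\dom f$.

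One small point in your final paragraph deserves tightening.  As written, you let $\varepsilon\downarrow 0$ and apply Br\o ndsted--Rockafellar with the symmetric choice $\lambda=\sqrt{\varepsilon}$; the resulting bound $f(x_\varepsilon)\le f(x)+\|x_\varepsilon^*\|\,\|x-x_\varepsilon\|$ then requires $\|x^*_\varepsilon\|\sqrt{\varepsilon}\to 0$, which is not automatic since the $\varepsilon$-subgradients $x^*_\varepsilon$ may blow up.  The clean fix is to decouple the two parameters: for each $n$ pick $x^*_n\in\partial_{1/n} f(x)$ and then choose $\lambda_n>0$ so small that $\lambda_n\le 1/n$ and $\lambda_n\|x^*_n\|\le 1/n$.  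Br\o ndsted--Rockafellar with parameters $(\varepsilon,\lambda)=(1/n,\lambda_n)$ then yields $x_n\in\dom\partial f$ with $\|x_n-x\|\le\lambda_n\to 0$ and, combining the exact subgradient inequality at $x_n$ with the $\tfrac{1}{n}$-subgradient inequality at $x$, $|f(x_n)-f(x)|\le \lambda_n\|x^*_n\|+1/n\le 2/n$.  With this adjustment your proof goes through without further changes.
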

The following technical lemma will be useful in the sequel. Since its proof is only a slight modification of \cite[Lemma~2.4]{Thibault}, we omit it here.
\begin{lemma}\label{lemma:attouch_simetric}
    Let $f_n\colon X \to \mathbb{R} \cup \{\infty\}$, $n \in \mathbb{N}$, be proper lower semicontinuous convex functions satisfying that $\eli f_n > -\infty $ and there exists a strongly convergent sequence $(a_n)_{n\in\N}$ for which 
\[
\displaystyle\limsup_n f_n(a_n) < \infty.
\]
Then, for any sequence $(x_n, x_n^*)$ with $(x_n, x_n^*) \in \partial f_n$ and $x_n \to x$, $\dual{x}_n \overset{\ast}{\rightharpoonup} \dual{x}$, one has:
\begin{enumerate}
    \item[(a)] $\eli f_n(x) = \displaystyle\liminf_n f_n(x_n)$ and $\els f_n(x) = \displaystyle\limsup_n f_n(x_n)$.
    \item[(b)] $(x, x^*) \in \partial^{\text{Fen}} (\eli f_n) \cap \partial (\els f_n)$.
\end{enumerate}
Furthermore, we have:
\[
\Li_{\norm{\cdot} \times w^\ast} \partial f_n \subseteq \partial^{\text{Fen}} (\eli f_n) \cap \partial (\els f_n).
\]
\end{lemma}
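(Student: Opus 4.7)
The approach is to mimic the strategy of \cite[Lemma 2.4]{Thibault}, replacing the role of norm convergence in the dual by weak$^\ast$ convergence throughout. The key technical ingredient is that $x_n^\ast \overset{\ast}{\rightharpoonup} x^\ast$ forces $(x_n^\ast)$ to be norm bounded by the uniform boundedness principle, so that $\langle x_n^\ast, h_n \rangle \to \langle x^\ast, h \rangle$ whenever $h_n \to h$ strongly; the subgradient inequality $f_n(u) \geq f_n(x_n) + \langle x_n^\ast, u - x_n \rangle$ for every $u \in X$ will be used repeatedly.

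For part $(a)$, the inequalities $\eli f_n(x) \leq \liminf_n f_n(x_n)$ and $\els f_n(x) \leq \limsup_n f_n(x_n)$ are immediate from \eqref{Def_epi_limits}, as $(x_n)$ is an admissible test sequence. For the reverse direction, I would take an arbitrary $y_n \to x$ and apply the subgradient inequality at $y_n$ to obtain $f_n(y_n) \geq f_n(x_n) + \langle x_n^\ast, y_n - x_n \rangle$; since $y_n - x_n \to 0$ in norm and $(x_n^\ast)$ is bounded, the last term tends to $0$, and hence $\liminf_n f_n(y_n) \geq \liminf_n f_n(x_n)$ and $\limsup_n f_n(y_n) \geq \limsup_n f_n(x_n)$. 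Taking the infimum over all $y_n \to x$ yields the two equalities in $(a)$.

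For part $(b)$, my first step would be to verify the finiteness of $\eli f_n(x)$ and $\els f_n(x)$. Applying the subgradient inequality at $u = a_n$, where $(a_n)$ is the anchor sequence from the hypothesis, gives $f_n(x_n) \leq f_n(a_n) - \langle x_n^\ast, a_n - x_n \rangle$, whose right-hand side is uniformly bounded above (since $\limsup_n f_n(a_n) < \infty$ and both $(x_n^\ast)$ and $(a_n - x_n)$ are norm bounded). Together with $\eli f_n > -\infty$ and part $(a)$, this forces $-\infty < \eli f_n(x) \leq \els f_n(x) < \infty$. Next, for any $u \in X$ and any $u_n \to u$, the subgradient inequality $f_n(u_n) \geq f_n(x_n) + \langle x_n^\ast, u_n - x_n \rangle$ combined with $\langle x_n^\ast, u_n - x_n \rangle \to \langle x^\ast, u - x \rangle$ (a genuine limit, not merely liminf or limsup) and part $(a)$ yields, upon passing to $\liminf$ and $\limsup$ and then taking the infimum over $u_n \to u$, the two subgradient inequalities
\[
    \eli f_n(u) \geq \eli f_n(x) + \langle x^\ast, u - x \rangle, \qquad \els f_n(u) \geq \els f_n(x) + \langle x^\ast, u - x \rangle,
\]
valid for every $u \in X$. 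The first, combined with $x \in \dom(\eli f_n)$, is the definition of $x^\ast \in \partial^{\text{Fen}}(\eli f_n)(x)$. Since $\els f_n$ is a proper lsc convex function (convexity follows from $\Gamma$-limsup of convex functions; properness from $\els f_n(x) < \infty$ and $\els f_n \geq \eli f_n > -\infty$), the second inequality yields $x^\ast \in \partial(\els f_n)(x)$.

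The final assertion of the lemma is immediate from $(b)$: every $(x, x^\ast) \in \Li_{\|\cdot\| \times w^\ast} \partial f_n$ admits, by definition, a sequence $(x_n, x_n^\ast) \in \partial f_n$ with $x_n \to x$ and $x_n^\ast \overset{\ast}{\rightharpoonup} x^\ast$, so $(b)$ applies. I anticipate the main technical step to be the finiteness argument for $\eli f_n(x)$ and $\els f_n(x)$, since without it the subsequent subgradient inequalities become either vacuous or ill-defined; this is precisely where the anchor sequence $(a_n)$ is used in combination with the norm boundedness of the weak$^\ast$-convergent sequence $(x_n^\ast)$.
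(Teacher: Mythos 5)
Your proof is correct, and it is essentially the argument the paper itself invokes: the paper omits the proof of this lemma, citing it as a slight modification of Thibault's Lemma~2.4, and your write-up (subgradient inequalities at the points $x_n$, norm boundedness of the weak$^\ast$-convergent sequence $(x_n^\ast)$ via Banach--Steinhaus, and the anchor sequence $(a_n)$ to secure finiteness) is precisely that modification. No gaps.
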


Now we prove $(b)\Longrightarrow (c)$ from Theorem \ref{Theo:equival}. \begin{proof}
Indeed, let us assume that (b) holds, i.e., 
\[
\partial f_n \xrightarrow{G(\norm{\cdot}\times w^\ast)} \partial f  
\]
and the normalization condition (N.C.) holds, in other words,  there exists $(a,a^*) \in \partial f$ and a sequence  $(a_n,a_n^*) \in \partial f_n$ such that 
    $a_n \to a,\;  
  a_n^* \overset{\ast}{\rightharpoonup} a^* \text{ and }  
    f_n(a_n) \to f(a).$

We need to show that $f= \eli f_n$ and $f= \els f_n$.

The N.C. ensures  that the assumptions of Lemma \ref{lemma:attouch_simetric} are satisfied. Hence, using  Lemma \ref{lemma:attouch_simetric}, we deduce that:
\[
\partial f \subseteq  \Li_{\norm{\cdot} \times w^\ast} \partial f_n  \subseteq \partial^{\text{Fen}}(\eli f_n) \cap \partial(\els f_n).
\]
From Lemma \ref{inclusion subdiferencial}, we obtain that:
\[
f = \eli f_n + C_1 = \els f_n + C_2,
\]
where \( C_1 \) and \( C_2 \) are constant real numbers. In addition, Lemma \ref{lemma:attouch_simetric}  the N.C. imply that:
\[
\eli f_n(a) = f(a) = \els f_n(a),
\]
for some \( a \in \operatorname{dom}(f) \). Hence, \( C_1 = C_2 = 0 \) and the proof is finished.

\end{proof}

\bibliographystyle{acm}
\bibliography{references}

\end{document}